\newcommand{\R}{\mathds R}
\def\bb{\alpha}
\def\dd{\delta}
\def\d{{\rm d}}
\def\<{\langle}
\def\>{\rangle}
 \def\tt{\tilde}
 \def\ff{\frac}
 \def\sq{\sqrt}
\def\bb{\beta}
\def \qq{\quad}
\def\R{\mathbb R}  \def\ff{\frac} \def\sq{\sqrt} 
 \def\kk{\kappa} 
\def\dd{\delta}  \def\vv{\varepsilon} 
\def\<{\langle} \def\>{\rangle} \def\bb{\Gamma} \def\bb{\gamma}
  \def\nn{\nabla}  
\def\d{\text{\rm{d}}} \def\bb{\beta} \def\aa{\alpha} 
  \def\si{\sigma} 
 \def\beq{\begin{equation}}  \def\F{\mathscr F}
\def\e{\text{\rm{e}}}  \def\OO{\Omega}  \def\oo{\omega}
 \def\tt{\tilde} 
 \def\P{\mathbb P} 
\def\C{\mathscr C}           
  \def\ll{\lambda}
\def\E{\mathbb E}
\def\to{\rightarrow}
\def\8{\infty}\def\3{\triangle}
\def\W{\mathbb{W}}\def\1{\lesssim}
\renewcommand{\bar}{\overline}
\renewcommand{\hat}{\widehat}
\renewcommand{\tilde}{\widetilde}
\newtheorem{theorem}{Theorem}[section]
\newtheorem{lemma}[theorem]{Lemma}
\newtheorem{corollary}[theorem]{Corollary}
\theoremstyle{definition}
\newtheorem{remark}[theorem]{Remark}
\numberwithin{equation}{section}
\begin{document}
\allowdisplaybreaks

\title[Uniform-in-time  estimates  for mean-field type SDEs] {Uniform-in-time  estimates  for mean-field type SDEs and applications}

\author{
Jianhai Bao\qquad
Jiaqing Hao
}
\date{}
\thanks{\emph{J.\ Bao:} Center for Applied Mathematics, Tianjin University, 300072  Tianjin, P.R. China. \url{jianhaibao@tju.edu.cn}}

\thanks{\emph{J.\ Hao:}
Center for Applied Mathematics, Tianjin University, 300072  Tianjin, P.R. China. \url{hjq_0227@tju.edu.cn}}

\maketitle

\begin{abstract}
Via constructing an asymptotic coupling by reflection, in this paper we establish uniform-in-time estimates on probability distances for mean-field type SDEs, where
the drift terms under consideration are dissipative merely in the long distance. As applications, we (i) explore the long time probability distance estimate between an SDE and its delay version; (ii) investigate the issue on uniform-in-time propagation of chaos for McKean-Vlasov SDEs, where the drifts might be singular with respect to the spatial variables;
  (iii) tackle  the discretization error bounds in an infinite-time horizon for stochastic algorithms (e.g. backward/tamed/adaptive Euler-Maruyama schemes as three typical candidates) associated with McKean-Vlasov SDEs.

\medskip

\noindent\textbf{Keywords:} Asymptotic coupling by reflection, mean-field type SDE, propagation of chaos, stochastic algorithm, uniform-in-time estimate.

\smallskip

\noindent \textbf{MSC 2020:} 60H10, 60F05, 60F15.
\end{abstract}

\section{Introduction, main result and applications}
\subsection{Introduction}\label{subsec1}\label{sub1}In this paper, we work on the  mean-field type   SDEs: for   $i\in\mathbb S_N:=\{1,\cdots,N\}$,
\begin{equation}\label{E1}
\d X_t^i=b(X_t^i,\mathscr L_{X_t^i})\,\d t+\si\d W_t^i+\si_0(X_t^i)\,\d B_t^i
\end{equation}
 and
\begin{equation}\label{E2}
\d X^{i,N}_t=\tt b(X^{i,N}_{\theta_t},\tt{\mu}_{\bar\theta_t}^N)\,\d t+ \si\d W_t^i+\si_0(X^{i,N}_t)\,\d B_t^i
\end{equation}
with  the initial datum $(X_{[-r_0,0]}^{1,N},\cdots, X_{[-r_0,0]}^{N,N})$ for some $r_0>0$, where, for each $i\in\mathbb S_N,$ $X_{[-r_0,0]}^{i,N}$ is a $C([-r_0,0];\R^d)$-valued random variable.
In \eqref{E1} and \eqref{E2},  $b,\tilde b:\R^d\times\mathscr P(\R^d)\to\R^d$ with $\mathscr P(\R^d)$ being the set of probability measures on $\R^d$; $\si\in \R$ and  $\si_0:\R^d\to \R^d\otimes\R^m$; $\mathscr L_{X_t^i}$ stands for the law of $X_t^i$; the maps $\theta :[0,\8)\to[-r_0,\8)$  and $\bar\theta:[0,\8)\to[0,\8)$, where concrete  expressions will be specified later concerning respective settings; $\tt{\mu}_{{t}}^N:=\frac{1}{N}\sum_{j=1}^N\delta_{X_{ t}^{j,N}}$ means the empirical measure   associated with  particles $X_t^{1,N},\cdots, X_t^{N,N}$;
$W^1=(W_t^1)_{t\ge0},\cdots, W^N= (W_t^N)_{t\ge0}$ (resp. $B^1=(B_t^1)_{t\ge0},\cdots,B^N= (B_t^N)_{t\ge0}$) are mutually independent $d$-dimensional (resp. $m$-dimensional) Brownian motions supported on the same filtered probability space
$(\OO,\F,(\F_t)_{t\ge0},\P)$; Furthermore,  $(W^1,\cdots,W^N)$ is supposed to be independent of  $(B^1,\cdots,B^N)$.

Regarding the objects $( X_t^1,\cdots,X_t^N )_{t\ge0}$ and $( X_t^{1,N},\cdots,X_t^{N,N} )_{t\ge0}$ solving \eqref{E1} and \eqref{E2}, respectively,
the central goal in the present paper is to establish the  quantitative estimate:
\begin{equation}\label{E6}
\mathbb W_1\big(\mathscr L_{X_t^i},\mathscr L_{X_t^{i,N}}\big)\le \varphi(t,N),\quad t\ge0,~ i\in\mathbb S_N,
\end{equation}
where $\mathbb W_1$ denotes the $L^1$-Wasserstein distance  and $\varphi:[0,\8) \times [0,\8)\to(0,\8)$ is a decreasing function with respect to the first variable and the second argument, respectively.  For the explicit form of $\varphi$, we would like to refer to \eqref{B14} below for more details.
Hereinafter, we attempt  to elaborate why we focus on the frameworks \eqref{E1} and \eqref{E2}, and explore the uniform-in-time estimate \eqref{E6}. The interpretations will be expounded based on the   following three perspectives.

\subsubsection{Uniform-in-time propagation of chaos}
In \eqref{E2},   once we take $\tilde{b} =b $, $r_0=0$ and $\theta_t=\bar\theta_t=t$,  \eqref{E2} subsequently becomes
\begin{equation}\label{E5}
\d X^{i,N}_t=  b(X^{i,N}_t,\tt{\mu}_{{t}}^N)\,\d t+ \si\d W_t^i+\si_0( X^{i,N}_t)\,\d B_t^i.
\end{equation}
As we know, \eqref{E1} and \eqref{E5} are the respective non-interacting particle system and interacting particle system corresponding to  the following McKean-Vlasov SDE:
\begin{equation}\label{BH12}
\d X_t =b(X_t ,\mathscr L_{X_t})\d t+\si\d W_t+\si_0(X_t)\,\d B_t,
\end{equation}
where $(W_t)_{t\ge0}$ is a $d$-dimensional Brownian motion (a copy of $(W_t^i)_{t\ge0}$ for each $i\in\mathbb S_N$), which is independent of the $m$-dimensional Brownian motion  $(B_t)_{t\ge0}$ (a copy of $(B_t^i)_{t\ge0}$ for each $i\in\mathbb S_N$). Since  the landmark work \cite{Sznitman} due to Sznitman, the theory on propagation of chaos
in a finite-time horizon has achieved great advancements for various scenarios; see, for instance,  \cite{CD1,Huang} for McKean-Vlasov SDEs with regular coefficients, and \cite{BH} regarding McKean-Vlasov SDEs with irregular drifts or diffusions. Recently,   for weakly interacting mean-field particle systems with possibly non-convex
confinement and interaction potentials, the uniform-in-time propagation of chaos: for some constants $c,\lambda>0$ (independent of $t>0$ and $i\in\mathbb S_N$),
\begin{equation}\label{E7}
 \mathbb W_1\big(\mathscr L_{X_t^{i,\mu}}, \mathscr L_{X_t^{i,N,\nu}}\big)\le c\,\big(\e^{-\lambda t}\mathbb W_1(\mu,\nu)+N^{-\frac{1}{2}}  \big),\quad t>0,~i\in\mathbb S_N
\end{equation}
was established in the remarkable work \cite{DEGZ}, where  $\mathscr L_{X_t^{i,\mu}}$  and $\mathscr L_{X_t^{i,N,\nu}}$ stand respectively  for the distributions of $X_t^i$ and  $X_t^{i,N}$ with $\mathscr L_{X_0^i}=\mu$ and $\mathscr L_{X_0^{i,N}}=\nu$.  For more progresses on the uniform-in-time propagation of chaos concerning  Langevin dynamics with regular potentials and stochastic particle systems with mean-field singular interactions, we refer to \cite{GLM,GLMb,Schuh} and references within.
As an immediate  by-product of the quantitative estimate (see Theorem \ref{thm} below) derived in this paper,  the uniform-in-time propagation of chaos \eqref{E7}  will be reproduced  right away, where
the underlying drift terms might be singular with respect to the spatial variables (see Corollary \ref{cor} for more details).
The proceeding explanations  can be  viewed as one of our motivations to consider  \eqref{E1} and \eqref{E2}, and  study the estimate \eqref{E6}.
\subsubsection{Uniform-in-time distribution distance between an SDE and its delay version}
Consider a semi-linear SDE:
\begin{equation}\label{E3}
\d X_t= \beta(\alpha-X_t)\d t+\si \d W_t+\si_0(X_t)\,\d B_t,
\end{equation}
where $\alpha\in\R^d,\beta>0$, $\si\in\R$, $\si_0:\R^d\to \R^d\otimes\R^d $, $(W_t)_{t\ge0}$ and  $(B_t)_{t\ge0}$ are mutually independent $d$-dimensional Brownian motions.  In case of $\si_0(x)\equiv{\bf 0}_{d\times d}$ ($d\times d$ zero matrix), \eqref{E3} is a linear SDE solved by the   Ornstein–Uhlenbeck (O-U for abbreviation) process. As we know,
the O-U process
 has been applied considerably  in financial mathematics and the other related research fields. Whilst, in the real world, the price of an asset    or the evolution of population dynamics  is influenced inevitably by major events that took place. In turn, the viewpoint  above motivates us  to consider a memory-dependent version of \eqref{E3} which is described as follows:
\begin{equation}\label{E4}
\d Y_t=\beta(\alpha-Y_{t-r_0})\d t+\si \d W_t+\si_0(X_t)\,\d B_t,\quad t>0
\end{equation}
with the initial datum $Y_{[-r_0,0]}$. In \eqref{E4},
 $(\alpha,\beta, (W_t)_{t\ge0}, (B_t)_{t\ge0})$ is kept untouched as in \eqref{E3}, and the positive $r_0 $ is   the length of the time lag. Apparently,  \eqref{E3} and \eqref{E4} are fit into the frameworks  \eqref{E1} and \eqref{E2}  by setting $N=1$, $W^1_t=W_t$, and $B_t^1=B_t$, and taking  $\theta_t=t-r_0$ and $\tilde{b}=b$. Indeed, the quantity  $r_0 $ can be regarded as a perturbation. Intuitively speaking, the probability distance between   $\mathscr L_{X_t}$ and $\mathscr L_{Y_t}$ with $\mathscr L_{X_0}=\mathscr L_{Y_0}$ should be very small in case that the perturbation intensity $r_0 $ is tiny. So,  the issue on how to quantify  the  probability distance between $\mathscr L_{X_t}$ and $\mathscr L_{Y_t}$   encourages us to pursue  the topic \eqref{E6}. The above can be regarded as another
inspiration to implement the present work.

\subsubsection{Uniform-in-time discretization error bounds for stochastic algorithms}
Our third motivation arises from the long time analysis on stochastic algorithms for McKean-Vlasov SDEs, where the drifts need not to be uniformly dissipative with respect to the spatial variables. As   is known to all, the Euler-Maruyama (EM for short) scheme is the simplest and succinctest  method to discretize the McKean-Vlasov SDE \eqref{BH12} with $\si_0(x)\equiv{\bf 0}_{d\times d}$, that is,
\begin{equation}\label{EW}
\d X_t=b(X_t,\mathscr L_{X_t})\,\d t+\si\,\d W_t,\quad t>0.
\end{equation}
Nonetheless, the EM scheme works merely for  SDEs with coefficients of linear growth; see, for instance, \cite[Theorem 2.1]{HJK} and
\cite[Lemma 6.3]{MSH} for a theoretical support and a counterexample, respectively.
Based on this, plenty of  variants of the EM scheme   were proposed   to cope with  numerical approximations for  SDEs with   non-globally Lipschitz continuous coefficients. The finite-time strong convergence of the backward EM scheme, as a typical candidate of   EM's variant,  related to the McKean-Vlasov SDE \eqref{EW}:   for a step size $\delta>0,$
\begin{equation}\label{EW1}
\d X_t^{\delta,i,N}=b(X_{  t_\delta+\delta }^{\delta,i,N},\tilde\mu^{\delta,N}_{ t_\delta})\,\d t+\si\,\d W_t^i,\quad t>0,~i\in\mathbb S_N
\end{equation}
was explored in \cite{DES}, where $t_\delta:=\lfloor t/\delta\rfloor\delta$ with $\lfloor t/\delta\rfloor$ being the integer part of $t/\delta,$ and $\tilde\mu^{\delta,N}_t:=\frac{1}{N}\sum_{j=1}^N\delta_{X_t^{\delta,i,N}}$.
Transparently, \eqref{EW} and \eqref{EW1} with $r_0=0$ and $\si_0(x)\equiv{\bf0}_{d\times d}$ are included in \eqref{E1} and \eqref{E2}
by taking
$\tilde b=b$,   $\theta_t= t_\delta+\delta$,  and $\bar\theta_t=t_\delta$, separately.

Next, inspired by e.g. \cite{HJKa,Sabanis}, \cite{DES}  put forward  the   tamed EM scheme: for $\kk\in(0,1/2],$
\begin{equation}\label{EW3}
\d X_t^{\delta,i,N}=\frac{b(X_{ t_\delta}^{\delta,i,N},\tilde\mu^{\delta,N}_{ t_\delta})}{1+\delta^\kk|b(X_{ t_\delta}^{\delta,i,N},\tilde\mu^{\dd,N}_{ t_\delta})|}\,\d t+\si\,\d W_t^i,\quad t>0,~i\in\mathbb S_N
\end{equation}
to simulate   the McKean-Vlasov SDE \eqref{EW} in a finite time interval. Since, for a fixed step size $\delta>0,$ the modified drift is uniformly bounded,
the distribution of $(X_t^{\delta,i,N})_{t\ge0}$ solving the tamed EM scheme \eqref{EW3} is not adequate to approximate the distribution of $(X_t)_{t\ge0}$ determined  by \eqref{EW} in an infinite-time horizon.  Enlightened  by e.g. \cite{BDMS,LNSZ,LNSZb},  to derive a uniform-in-time estimate between the distributions of  the exact solution and the numerical counterpart, we construct  the following tamed EM scheme for the McKean-Vlasov  SDE \eqref{EW}:
\begin{equation}\label{EW2}
\d X_t^{\delta,i,N}=\frac{b(X_{ t_\delta}^{\delta,i,N},\tilde\mu^{\dd,N}_{ t_\delta})}{1+\delta^\kk\|\nn b(X_{ t_\delta}^{\delta,i,N},\tilde\mu^{\delta,N}_{ t_\delta})\|_{\rm HS}}\,\d t+\si\,\d W_t^i, \quad t>0,~i\in\mathbb S_N,
\end{equation}
kjn
where, for each fixed $\mu\in\mathscr P(\R^d)$,  $x\mapsto b(x,\mu)$ is a $C^1$-function, $\nn $ means the gradient operator with respect to the spatial variables, and
$\|\cdot\|_{\rm HS}$ stipulates the Hilbert-Schmidt norm. Compared \eqref{EW2} with \eqref{EW3}, the tamed drift in \eqref{EW2} might  not be bounded any more and  is at most of linear growth with respect to the spatial variables. Obviously, \eqref{E1} and \eqref{E2} with  $r_0=0$ and $\si_0(x)\equiv{\bf0}_{d\times d}$ can cover \eqref{EW} and \eqref{EW2} once we choose $\theta_t=\bar\theta_t=t_\delta$ and set
 $$
   \tilde b(x,\mu):=\frac{b(x,\mu)}{1+\delta^\kk\|\nn b(x,\mu)\|_{\rm HS}}
   .$$

No matter what the backward EM scheme \eqref{EW1} or the tamed EM algorithm \eqref{EW2}, the underlying time step is a uniform constant. In \cite{FG}, a refined EM scheme with an  adaptive step size was initiated to approximate SDEs with super-linear drifts.
In the spirit of \cite{FG}, \cite{RS} constructed  an adaptive EM scheme associated with \eqref{EW}, which is described as follows:
\begin{equation}\label{EW4}
   X_{t_{n+1}}^{\delta,i,N}= X_{t_n}^{\delta,i,N}+b\big(  X_{  t_n}^{\delta,i,N},\tilde\mu_{t_n}^{\delta,N}\big)h_n^\delta+\si\triangle W_{t_n}^i,\quad n\ge0,~i\in\mathbb S_N,
\end{equation}
where $t_{n+1}:=t_n+h_n^\delta$ for an  adaptive time step $h_n^\delta$ (see \eqref{T7} below for an alternative of $h_n^\delta$), and 
$\triangle W_{t_n}^i:=W_{t_{n+1}}^i-W_{t_{n }}^i$.
In contrast to \eqref{EW1} and \eqref{EW2}, the time step in \eqref{EW4} is not a  constant any more  but  an adaptive process,  which is determined by the current approximate solution.
Let $\underline{t} =\max\{t_n: t_n\le t\}$. Then,  the continuous version of \eqref{EW4} can be formulated as
\begin{equation}\label{EW5}
\d X_t^{\delta,i,N}=b\big(X_{\underline{t}}^{\delta, i,N},\tilde\mu_{\underline{t}}^{\delta,N}\big)\,\d t+\si\,\d W_t^i,\quad t>0,~i\in\mathbb S_N.
\end{equation}
Therefore, \eqref{EW} and \eqref{EW5} can be incorporated    into the frameworks of \eqref{E1} and \eqref{E2} by setting $\bar\theta_t=\theta_t=\underline{t}$, $\si_0(x)\equiv{\bf 0}_{d\times d}$, and $b=\widetilde{b}$.

With regard to the backward/tamed/adaptive EM scheme for classical SDEs and McKean-Vlasov SDEs, there is a huge amount of literature concerned with strong/weak convergence
in a finite-time interval; see  \cite{FG,HMS,KNRS,RS,Sabanis}, to name just a few. Meanwhile, there are still plenty of work handling long time behavior of numerical algorithms when (McKean-Vlasov) SDEs  involved are uniformly  dissipative with respect to the spatial variables; see  e.g. \cite{BMY,FG,NTDH,YL,YM} and references therein. In the aforementioned papers, the synchronous coupling   was employed to analyze the convergence property (in an infinite-time horizon) of the underlying algorithms.  Whereas, such an approach does not work any more to deal with the long time behavior of stochastic algorithms when (McKean-Vlasov) SDEs under investigation are not globally dissipative.  In the present work, as  another direct application of the main result (see Theorem \ref{thm}), concerning McKean-Vlasov SDEs,  we  quantify  uniform-in-time  estimates  on the probability distances between   laws of    exact solutions and   numerical solutions derived via    backward/tamed/adaptive EM schemes.  Once more, the elaborations above  urge us to work on the frameworks \eqref{E1} and \eqref{E2}, and conduct a further study on  \eqref{E6}.

\subsection{Main result}
Below, we assume that for $x\in\R^d$ and $\mu\in\mathscr P(\R^d)$,
\begin{align}\label{EP}
b(x,\mu)=b_1(x)+(b_0\ast \mu)(x)\quad \mbox{ with } (b_0\ast \mu)(x):=\int_{\R^d}b_0(x-y)\,\mu(\d y).
\end{align}
For such setting, we shall assume that the corresponding  SDEs \eqref{E1} and \eqref{E2} are strongly well-posed in order to establish a much more general result (i.e., Theorem \ref{thm}).  In the sequel, for SDEs under consideration, we shall present explicit conditions on the coefficients   to guarantee the   strong well-posedness. We further suppose that
\begin{enumerate}
 \item[$({\bf A}_1)$]$b_1(x)$ is continuous and locally bounded in $\R^d$ and
there exist constants $\ell_0\ge0$ and $\lambda>0$ such that for   $x,y\in\R^d$,
\begin{equation}\label{B1}
\<x-y,b_1(x)-b_1(y)\>\le |x-y|\phi(|x-y|)\mathds{1}_{\{|x-y|\le\ell_0\}}-\lambda|x-y|^2\mathds{1}_{\{|x-y|>\ell_0\}},
\end{equation}
where  $\phi:[0,\8)\to[0,\8)$ with $\phi(0)=0$ is increasing and continuous.
Moreover,  there exists a constant $K>0$ such that for all $x,y\in\R^d$,
\begin{equation} \label{B2}
|b_0(x)-b_0(y)|\le K|x-y|.
\end{equation}
\item[$({\bf A}_2)$] $\si\neq 0$, and there is a constant $L>0$ such that for all $x,y\in \R^d$,
\begin{equation}\label{B2-1}
	\|\si_0(x)-\si_0(y)\|_{\rm HS}^2\le L|x-y|^2.
\end{equation}
\end{enumerate}

To proceed, we make some comments on Assumptions $({\bf A}_1)$ and $({\bf A}_2)$.
\begin{remark}
In terms of \eqref{B1}, $b_1$ is dissipative merely in the long range. Particularly,  as revealed  in Corollary \ref{cor} below, \eqref{B1} allows the drift term $b_1$ to be singular. For example,
$b_1(x)=\bar b_1(x)+\tilde b_1(x)$, in which $\tilde b_1$ is   Dini continuous (see Assumption ({\bf H}) below for details), and  there exist constants  $\lambda_1',\lambda_2',\ell_0'>0$ such that for any $x,y\in\R^d$ ,
$$
\<x-y,\bar b_1 (x)-\bar b_1 (y)\>\le \lambda_1'|x-y|^2\mathds{1}_{\{|x-y|\le \ell_0'\}}-\lambda_2'|x-y|^2\mathds{1}_{\{|x-y|>\ell_0'\}}.
$$
Concerned with the diffusion terms, the additive part is set to be non-degenerate (which plays a crucial role in constructing the asymptotic coupling by reflection), and the multiplicative counterpart  might be degenerate.  Herein, we would like to stress that the additive intensity $\si$ considered in the present work  is a non-zero constant in lieu of a non-degenerate matrix  to write merely  the prerequisite
  \eqref{B1} and the asymptotic  coupling by reflection (see \eqref{B15} below for more details) in a simple way.
\end{remark}

For classical SDEs (with the same drifts and diffusions),
 it is enough to construct the reflection coupling  before the coupling time since  two SDEs will merge together afterwards due to  strong well-posedness (provided it exists). However, as far as two SDEs with different coefficients are concerned, the coupled processes can diverge once again even though they meet at the coupling time.  Therefore,  the classical reflection coupling approach no longer works    to estimate the probability distance between laws of solutions to SDEs with different coefficients.

 Inspired by \cite{Wang}, where gradient/H\"older estimates as well as exponential convergence were derived for   nonlinear monotone SPDEs, we shall design an asymptotic coupling by reflection to achieve the qualitative estimate \eqref{E6}.  To describe  the asymptotic coupling by reflection, we need to introduce some additional notations.
For $\vv\ge0$, let $h_\vv:[0,\8)\to[0,1]$ be a $C^1$-function satisfying
\begin{equation*}
h_\vv(r)=
\begin{cases}
0,\qquad 0\le r\le \vv,\\
1,\qquad r\ge 2\vv,
\end{cases}
\end{equation*}
and  $h_\vv^*:[0,\8)\to[0,1]$ be defined by $h_\vv^*(r)=\sqrt{1-h_\vv^2(r)}, r\ge0.$ Set
\begin{equation*}\label{B3}
\Pi(x):=I_{d\times d}-2{\bf e}(x)\otimes{\bf e}(x),\qquad x\in\R^d,
\end{equation*}
where $I_{d\times d}$ is the $d\times d$ identity matrix, ${\bf e}(x):=\frac{x}{|x|}\mathds{1}_{\{x\neq{\bf0}\}}$, and ${\bf e}(x)\otimes{\bf e}(x)$ means the tensor between ${\bf e}(x)$ and ${\bf e}(x)$. Thus, $\Pi $  defined above is an orthogonal matrix. Furthermore,
we shall assume that $W^{1,1}:=(W_t^{1,1})_{t\ge0}, \cdots, W^{1,N}:=(W_t^{1,N})_{t\ge0}$ (resp. $W^{2,1}:=(W_t^{2,1})_{t\ge0}, \cdots, W^{2,N}:=(W_t^{2,N})_{t\ge0}$) are mutually independent $d$-dimensional (resp. $m$-dimensional) Brownian motions carried on the same probability space as that of $B^1$, $\cdots$, $B^N$. In addition,  we  suppose that $(W^{1,1},\cdots, W^{1,N})$, $(W^{2,1},\cdots, W^{2,N})$ and $(B^{1},\cdots, B^{N})$ are mutually independent.

With the proceeding notations at hand,  we can write down the asymptotic coupling by reflection associated with \eqref{E1} and \eqref{E2}. More precisely,  we consider the following coupled interacting particle system:  for any $i\in\mathbb S_N$ and $t>0,$
\begin{equation}\label{B15}
 \begin{cases}
\d Y_t^{i,\vv}=b(Y_t^{i,\vv},\hat\mu_t^{i,\vv})\,\d t+\si h_\vv(|Z_t^{i,N,\vv}|)\d W_t^{1,i}+\si h_\vv^*(|Z_t^{i,N,\vv}|)\d W_t^{2,i}+\si_0(Y_t^{i,\vv})\, \d B_t^i, \\
\d Y_t^{i, N,\vv}= \tilde b(Y_{\theta_t}^{i,N,\vv},\tilde\mu_{\bar\theta_t}^{N,\vv})\,\d t+\si  \Pi( Z_t^{i,N,\vv} ) h_\vv (|Z_t^{i,N,\vv}|)\d W_t^{1,i}\\
 \qquad\quad\quad +\si h_\vv^*(|Z_t^{i,N,\vv}|)\d W_t^{2,i} + \si_0(Y_t^{i, N,\vv})\, \d B_t^i
\end{cases}
\end{equation}
with the initial condition $\big(Y_0^{i,\vv}, Y_{[-r_0,0]}^{i,N,\vv}\big)_{i\in\mathbb S_N}=\big(X_0^i,X_{[-r_0,0]}^{i,N}\big)_{i\in\mathbb S_N}$, which are i.i.d. random variables. In \eqref{B15}, the quantities $\hat\mu_t^{i,\vv},  Z_t^{i,N,\vv}$, and $\tilde\mu_t^{N,\vv}$ are defined respectively by
$$\hat\mu_t^{i,\vv} =\mathscr L_{Y_t^{i,\vv}},\quad Z_t^{i,N,\vv} =Y_t^{i,\vv}-Y_t^{i, N,\vv},\quad \tilde\mu_t^{N,\vv} =\frac{1}{N}\sum_{j=1}^N\delta_{Y_{t}^{j, N,\vv}}.$$

Now, we present some comments on the coupling constructed in \eqref{B15}. 
\begin{remark}
Note obviously  that the noise in \eqref{E1} include two parts, namely, the additive part and the multiplicative part.  In terms of \eqref{B15}, for the additive part, which is also non-degenerate, we adopt the asymptotic coupling by reflection; Whereas, for the multiplicative part (might be degenerate), we employ the synchronous coupling, which, in literature,  is also named as the coupling of marching soldiers. Moreover, we would like to emphasize that, for the construction of the asymptotic coupling by reflection, the drift term $b$ can be much more general rather than the form in \eqref{EP} as demonstrated in Lemma \ref{lemma1}. 
\end{remark}

Furthermore,
for the notational brevity, we  set for any $t\ge0$,
\begin{equation*}
{\bf X}_t^N:  =\big(X_t^{1},\cdots, X_t^{N}\big), \quad {\bf X}_t^{N,N}:  =\big(X_t^{1,N},\cdots, X_t^{N,N}\big)
\end{equation*}
and
\begin{equation*}
{\bf Y}_t^{N,\vv}:  =\big(Y_t^{1,\vv},\cdots, Y_t^{N,\vv}\big), \quad {\bf Y}_t^{N,N,\vv}:  =\big(Y_t^{1,N,\vv},\cdots, Y_t^{N,N,\vv}\big).
\end{equation*}
As claimed in Lemma \ref{lemma1} below, for any $\vv>0 $,   $( {\bf Y}_t^{N,\vv}, {\bf Y}_t^{N,N,\vv})_{t\ge0}$ is a coupling of $({\bf X}_t^N,{\bf X}_t^{N,N} )_{t\ge0}$.
Additionally,  for any $t\ge0$, $\mu \in\mathscr P(\R^d)$, and $\nu\in\mathscr P(\mathscr C)$ with $\mathscr C:=C([-r_0,0];\R^d)$ (i.e., the set of continuous $\R^d$-valued functions on $[-r_0,0]$),
denote   $\mu_t^i$ and $\nu_t^{i,N}$  by the laws of $X_t^i$ and $X_t^{i,N}$ with $\mathscr L_{X_0^i}=\mu$ and $\mathscr L_{X_{[-r_0,0]}^{i,N}}=\nu$, separately.

With the aid of previous  preliminaries, the quantitative estimate \eqref{E6} can be portrayed precisely as stated in the following theorem.

\begin{theorem}\label{thm}
Assume  Assumptions $({\bf A}_1)$ and $({\bf A}_2)$. Then, there are constants  $C,K^*,L^*,\lambda^*>0$ such that for any  $K\in[0,K^*]$, $L\in(0,L^*]$,   $\mu\in\mathscr P_{2}(\R^d)$, and  $\nu\in\mathscr P_1(\mathscr C)$,
\begin{equation}\label{B14}
\begin{split}
\mathbb W_1(\mu_t^i,\nu_t^{i,N})&\le C\bigg(\e^{-\lambda^* t}\mathbb W_1(\mu,\nu_0) +   N^{-\frac{1}{2}}\mathds{1}_{\{K>0\}} \\
&\qquad\quad+ \int_0^t\e^{-\lambda^* (t-s)}\E\big| b(X_s^{i, N},\tilde \mu_s^{N})-\tilde b(X_{\theta_s}^{i,N},\tilde\mu_{\bar\theta_s}^{N})\big| \,\d s\bigg),\quad t>0,\quad i\in\mathbb S_N,
\end{split}
\end{equation}
where  $\nu_0(\d x):=\nu(\{\eta\in\mathscr C\}: \eta_0\in\d x)$.
\end{theorem}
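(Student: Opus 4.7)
The plan is to prove Theorem \ref{thm} via the asymptotic reflection coupling \eqref{B15} together with an Eberle-type concave semimetric. Since (by Lemma \ref{lemma1}) $Y_t^{i,\vv}$ has law $\mu_t^i$ and $Y_t^{i,N,\vv}$ has law $\nu_t^{i,N}$ when the initial data are coupled so as to realise $\mathbb W_1(\mu,\nu_0)$, we have
$\mathbb W_1(\mu_t^i,\nu_t^{i,N})\le \E\,f(|Z_t^{i,N,\vv}|)/c_0$
for any concave $f:[0,\infty)\to[0,\infty)$ with $f(r)\ge c_0 r$. The strategy is to construct $f$ and then, by Itô's formula, show that $\E f(|Z_t^{i,N,\vv}|)$ satisfies a Gronwall-type inequality with exponential decay rate $\lambda^*>0$, plus the genuine error terms appearing on the right-hand side of \eqref{B14}. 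Finally I will send $\vv\downarrow 0$ to conclude.

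First I would apply Itô's formula to $f(|Z_t^{i,N,\vv}|)$ with $Z_t^{i,N,\vv}=Y_t^{i,\vv}-Y_t^{i,N,\vv}$. The reflection part of \eqref{B15} (driven by $W^{1,i}$) contributes a martingale plus a second-order term of magnitude $2\si^2 h_\vv^2(|Z|)f''(|Z|)$ (this is the crucial contraction-producing term, exploiting the non-degenerate additive noise $\si\neq 0$), while the synchronous $W^{2,i}$ and $B^i$ pieces contribute only $L|Z|^2$-type bounded-variation terms via \eqref{B2-1}. For the drift I would split
\begin{equation*}
b(Y_t^{i,\vv},\hat\mu_t^{i,\vv})-\tt b(Y_{\theta_t}^{i,N,\vv},\tt\mu_{\bar\theta_t}^{N,\vv}) = \underbrace{b_1(Y_t^{i,\vv})-b_1(Y_t^{i,N,\vv})}_{\text{use \eqref{B1}}}+\underbrace{(b_0*\hat\mu_t^{i,\vv})(Y_t^{i,\vv})-(b_0*\tt\mu_t^{N,\vv})(Y_t^{i,N,\vv})}_{\text{use \eqref{B2}}}+ \mathcal R_t,
\end{equation*}
where $\mathcal R_t:=b(Y_t^{i,N,\vv},\tt\mu_t^{N,\vv})-\tt b(Y_{\theta_t}^{i,N,\vv},\tt\mu_{\bar\theta_t}^{N,\vv})$ is exactly the discretisation/delay residual. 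The $b_0$-term is further decomposed by inserting the empirical measure $\frac{1}{N}\sum_j\delta_{Y_t^{j,\vv}}$: the difference against $\hat\mu_t^{i,\vv}$ produces the $N^{-1/2}$ rate via the standard i.i.d.\ empirical-measure estimate (since $Y^{1,\vv},\ldots,Y^{N,\vv}$ are i.i.d.\ copies of the McKean--Vlasov solution with a second moment), while the difference against $\tt\mu_t^{N,\vv}$ is bounded by $K\cdot\tfrac{1}{N}\sum_j|Z_t^{j,N,\vv}|$ using \eqref{B2}, and after averaging over $i$ is absorbed when $K\le K^*$.

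The central obstacle is constructing $f$. Following Eberle, I would pick a decreasing weight $\varphi$ of the form $\varphi(r)=\exp(-c\int_0^r s\phi(s)\mathds{1}_{\{s\le\ell_0\}}\d s)$ together with a Lyapunov cutoff $g$, set $f(r)=\int_0^{r\wedge R}\varphi(s)g(s)\d s$ on a sufficiently large interval $[0,R]$ controlled by $\ell_0$ and $\lambda$, and verify that
\begin{equation*}
2\si^2 f''(r)+r\,\psi(r)f'(r)\le -\lambda^{**}f(r),\qquad r>0,
\end{equation*}
where $\psi(r):=|x-y|^{-1}\langle x-y,b_1(x)-b_1(y)\rangle$ is controlled by \eqref{B1}. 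The extra perturbations $K\|f'\|_\infty$ (from the $b_0$-interaction) and $Lr^2\|f''\|_\infty$ (from the multiplicative noise \eqref{B2-1}) are absorbed into $-\lambda^{**}f$ provided $K\le K^*$ and $L\le L^*$, yielding the effective rate $\lambda^*=\lambda^{**}-O(K+L)>0$. The concavity of $f$ together with $c_1 r\le f(r)\le c_2 r$ allows the translation between $\E f(|Z_t^{i,N,\vv}|)$ and $\E|Z_t^{i,N,\vv}|$, which is exactly $\mathbb W_1$ after coupling.

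After these estimates, taking expectation and applying Gronwall gives
\begin{equation*}
\E f(|Z_t^{i,N,\vv}|)\le \e^{-\lambda^* t}\E f(|Z_0^{i,N,\vv}|)+C\int_0^t \e^{-\lambda^*(t-s)}\big(N^{-1/2}\mathds{1}_{\{K>0\}}+\E|\mathcal R_s|+\text{o}_\vv(1)\big)\d s.
\end{equation*}
Choosing the coupling initial condition to be $\mathbb W_1(\mu,\nu_0)$-optimal makes the first term $\le C\e^{-\lambda^* t}\mathbb W_1(\mu,\nu_0)$. Sending $\vv\downarrow 0$ eliminates the $h_\vv$ cut-off (the $\text{o}_\vv(1)$ term vanishes because the reflection is genuinely active off the diagonal, and occupation-time estimates at $0$ contribute nothing in the limit). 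Dividing by $c_1$ delivers exactly \eqref{B14}.
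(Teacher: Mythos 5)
Your overall route coincides with the paper's: the coupling \eqref{B15}, It\^o's formula for a concave radial function of $Z_t^{i,N,\vv}$, the three-way splitting of the drift with residual $\mathcal R_t=b(Y_t^{i,N,\vv},\tilde\mu_t^{N,\vv})-\tilde b(Y_{\theta_t}^{i,N,\vv},\tilde\mu_{\bar\theta_t}^{N,\vv})$, the i.i.d.\ empirical-measure estimate producing $N^{-1/2}$, averaging over the particles to absorb the $K$-interaction, Gronwall, and $\vv\downarrow0$. The genuine problem is your construction of $f$. Taking $f(r)=\int_0^{r\wedge R}\varphi(s)g(s)\,\d s$ makes $f$ constant on $[R,\infty)$, and then (i) your claimed two-sided bound $c_1 r\le f(r)\le c_2 r$ fails for $r>R$, so a bound on $\E f(|Z_t^{i,N,\vv}|)$ no longer controls $\E|Z_t^{i,N,\vv}|$ and hence does not yield a $\mathbb W_1$ estimate; and (ii) your claimed inequality $2\si^2 f''(r)+\psi f'(r)\le-\lambda^{**}f(r)$ for all $r>0$ cannot hold for $r>R$, where the left-hand side vanishes while the right-hand side is strictly negative. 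A bounded (Eberle cut-off) $f$ only gives contraction in the bounded semimetric $f(|x-y|)$; to obtain \eqref{B14} in $\mathbb W_1$ one must use the long-range dissipativity in \eqref{B1} to choose $f$ with a genuinely linear tail. This is exactly what the paper does with the explicit choice $f(r)=1-\e^{-c_1r}+c_2r$, for which $c_2r\le f(r)\le (c_1+c_2)r$, and the Lyapunov-type inequality \eqref{E8} is verified separately on $\{r\le\ell_0\}$ (where the reflection term $2\si^2f''$ dominates) and on $\{r>\ell_0\}$ (where the $-(\lambda-K-L/2)r$ drift dominates). With that repair (take $f'=\varphi g$ bounded below by a positive constant, i.e.\ extend $f$ linearly, not constantly, beyond $R$), your scheme matches the paper's proof; note also the factor slip $r\,\psi(r)f'(r)$ versus $\psi(r)f'(r)$ given your normalisation of $\psi$.

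Two further points you gloss over but which the paper needs as separate steps. First, the uniform-in-time $N^{-1/2}$ bound is not automatic: it requires $\sup_{t\ge0}\E|Y_t^{i,\vv}|^2<\8$, which is the content of Lemma \ref{lemma2} (proved under $\lambda>2K+L/2$); this is precisely where $\mu\in\mathscr P_2(\R^d)$ and part of the smallness restrictions on $K,L$ enter, so your "standard i.i.d.\ empirical-measure estimate" should be accompanied by such a uniform moment bound. Second, the assertion that $Y^{1,\vv},\dots,Y^{N,\vv}$ are i.i.d.\ copies with laws $\mu_t^i$, and that $Y^{i,N,\vv}$ has law $\nu_t^{i,N}$, is not immediate because both systems are driven by the mixed noises $h_\vv\,\d W^{1,i}+h_\vv^*\,\d W^{2,i}$; the paper's Lemma \ref{lemma1} (L\'evy characterisation plus the martingale argument showing the recombined Brownian motions are mutually independent) is what legitimises this identification, and it is also what lets you replace $\E|\mathcal R_s|$ by $\E|b(X_s^{i,N},\tilde\mu_s^N)-\tilde b(X_{\theta_s}^{i,N},\tilde\mu_{\bar\theta_s}^N)|$ in the final bound.
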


Before we proceed, let's make some remarks on Theorem \ref{thm}.
\begin{remark}
{\it Error bounds.} From \eqref{B14}, it is easy to see that $\mathbb W_1(\mu_t^i,\nu_t^{i,N})$ is dominated  by three terms, where the first term is concerned with the $L^1$-Wasserstein distance between the initial (projection) distributions with the decay prefactor $\e^{-\lambda^* t}$, the second one is related to the decay rate with respect to the particle number, and the third part involves the error among $b$ and $\tilde b$, which, in particular, embodies the dependence of the initial segment and the length of time lag when \eqref{E2} is an SDE  with memory; see Theorem \ref{thm3} below for more details. At first sight, the right hand side of \eqref{B14} is not elegant since the third term in the big parenthesis is not explicit. Nevertheless, the third term is much more tractable for applications we shall carry out.

 {\it Initial moments.}   When the drift term is written in the form \eqref{EP} and the associated initial distribution has a finite second-order moment, Theorem \ref{thm} shows that
the decay speed of $\mathbb W_1(\mu_t^i,\nu_t^{i,N})$ with respect to the particle number is $N^{-\frac{1}{2}}$. In some scenarios,
  the drift terms can be allowed to be much more general so the initial distribution necessitates merely a finite  lower-order moment.
Whereas, for this setting, the decay rate of $\mathbb W_1(\mu_t^i,\nu_t^{i,N})$ with respect to the particle number will be dependent on the dimension $d$ and become dramatically worse when,   in particular, \eqref{EEE} below is tackled by taking advantage of
\cite[Theorem 1]{Fournier}. Therefore, in the present work, we prefer the former framework rather than the latter one.

{\it Coupling construction.}
It is worthy to point out that, in \cite{Suzuki},  another kind of asymptotic coupling by reflection (which was called an approximate reflection coupling therein) was deployed to investigate bounds on the discretization error for Langevin dynamics, where the potential term is a $C^1$-function and is of linear growth.   Compared the asymptotic coupling by reflection in \cite{Suzuki} with \eqref{B15}, we find that the weak limit process of the coupled process constructed in \cite{Suzuki} is a coupling process while, for any $\vv>0 $,  the coupled process  $( {\bf Y}_t^{N,\vv}, {\bf Y}_t^{N,N,\vv})_{t\ge0}$  determined by  \eqref{B15} is a coupling process we desire. It is also worthy to emphasize that in \cite{Suzuki} a series of work on tightness need to be implemented in order to examine that the associated weak limit process is a coupling process. Therefore, the asymptotic coupling by reflection built in \eqref{B15} has its own advantages.

{\it Noise terms.} It seems to be slightly  weird that the noise term in \eqref{E1} encompasses two parts (i.e., the additive part and the multiplicative counterpart). Nevertheless, as long as the  diffusive term of the non-interacting particle system under investigation is multiplicative and non-degenerate, we can adopt the noise-decomposition trick (see e.g. \cite{PW}) so it can be decomposed equally in the sense of distribution into the format of \eqref{E1}. Based on point of view above, the framework \eqref{E1} can make  sense very well.

{\it Lipschitz constants.} In Theorem \ref{thm}, the constants $K^*$ and $L^*$ are the respective upper bounds of Lipschitz constants concerning $b_0$
and $\si_0$. Via a close inspection of the proof for Theorem \ref{thm},  the   explicit forms concerning $K^*$ and $L^*$ can be tracked. As far as  McKean-Vlasov SDEs are concerned, the Lipschitz constant $K$ is generally  small otherwise the phase phenomenon can occur. Furthermore, the multiplicative intensity $\si_0$ is regarded as a perturbation of $\si$ so the corresponding Lipschitz constant $L$ should also be small provided that one wants to handle through  the asymptotic coupling by reflection the uniform-in-time estimate for SDEs with partially dissipative drifts.
\end{remark}

As an immediate by-product of Theorem \ref{thm}, we present the following statement, which is concerned with uniform-in-time propagation of chaos
for McKean-Vlasov SDEs, where one part of the drifts might be singular in the spatial variables.

\begin{corollary}\label{cor}
Assume  Assumptions $({\bf A}_1)$ and $({\bf A}_2)$. Then, there are constants  $C,K^*,L^*,\lambda^*>0$ such that for any  $K\in(0,K^*]$, $L\in[0,L^*]$,  $\mu\in\mathscr P_{2}(\R^d)$, $\nu\in\mathscr P_1(\R^d)$,    and $t\ge0,$
\begin{equation}\label{BH11}
\max_{i\in\mathbb S_N}\mathbb W_1(\mu_t^i,\nu_t^{i,N})\le C\big(\e^{-\lambda ^*t}\mathbb W_1(\mu,\nu) +   N^{-\frac{1}{2}}\big).
\end{equation}
 In particular, \eqref{BH11} holds true for the McKean-Vlasov SDE \eqref{BH12} with $b_1=\bar b_1+\tilde b_1 $
provided that
\begin{enumerate}
\item[$({\bf H})$]  $b_0$ satisfies \eqref{B2};
  $\bar b_1$ is Lipschitz in $\R^d$ and
satisfies \eqref{B1} with $\phi(r)=\lambda_0r  $ for some $\lambda_0>0$;  $\tilde b_1$ is  uniformly bounded and fulfills that
\begin{align*}
|\tilde b_1(x)-\tilde b_1(y)|\le \varphi(|x-y|),\quad x,y\in\R^d
\end{align*}
for some $\varphi\in\mathscr D$ with $\lim_{r\to\8}\varphi(r)/r=0$. Herein,
\begin{align*}
 \mathscr D:=\Big\{\varphi:\R_+\to\R_+\Big| \varphi(0)=0, \varphi \mbox{ is increasing, continuous, concave} \mbox{~~and~~} \int_0^1\frac{\varphi(s)}{s}\,\d s<\8\Big\} .
\end{align*}
\end{enumerate}
\end{corollary}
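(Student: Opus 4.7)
The plan is to deduce Corollary \ref{cor} from Theorem \ref{thm} in two short steps: first, specialize the theorem to the propagation-of-chaos regime in order to recover \eqref{BH11} from \eqref{B14}; second, verify that Assumption $({\bf H})$ sits inside Assumption $({\bf A}_1)$ so that the theorem applies to \eqref{BH12} with $b_1=\bar b_1+\tilde b_1$.

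For the first step I would set $\tilde b=b$, $r_0=0$, and $\theta_t=\bar\theta_t=t$ in Theorem \ref{thm}. Under these choices \eqref{E1} becomes $N$ independent copies of the McKean--Vlasov SDE \eqref{BH12}, \eqref{E2} reduces to the mean-field interacting particle system \eqref{E5}, the path space $\mathscr{C}=C([-r_0,0];\R^d)$ collapses to $\R^d$, and the projection $\nu_0$ coincides with $\nu$. Crucially, the integrand $b(X_s^{i,N},\tilde\mu_s^N)-\tilde b(X_{\theta_s}^{i,N},\tilde\mu_{\bar\theta_s}^N)$ in \eqref{B14} vanishes identically, so the integral term in that bound disappears. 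Combined with $K>0$ activating the indicator $\mathds{1}_{\{K>0\}}$, the estimate \eqref{B14} reduces to \eqref{BH11}, uniformly in $i\in\mathbb S_N$ and hence after taking the maximum; this proves the first assertion.

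For the second step I would verify that under Assumption $({\bf H})$ the drift $b_1=\bar b_1+\tilde b_1$ satisfies the long-range dissipativity condition \eqref{B1}. Splitting the inner product and applying the Cauchy--Schwarz inequality yields
$$\<x-y,b_1(x)-b_1(y)\>\le \lambda_0|x-y|^2\mathds{1}_{\{|x-y|\le\ell_0'\}}-\lambda_2'|x-y|^2\mathds{1}_{\{|x-y|>\ell_0'\}}+|x-y|\varphi(|x-y|),$$
where the first two terms come from the hypothesis on $\bar b_1$ and the last from the modulus on $\tilde b_1$. Since $\varphi(r)/r\to 0$ as $r\to\8$, one may pick $\ell_0\ge\ell_0'$ large enough that $\varphi(r)\le (\lambda_2'/2)\,r$ for all $r\ge\ell_0$. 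Setting $\lambda:=\lambda_2'/2$ and $\phi(r):=\lambda_0 r+\varphi(r)$, which is continuous, increasing, and vanishes at the origin, a short case split on $|x-y|$ against the thresholds $\ell_0'$ and $\ell_0$ verifies \eqref{B1} for $b_1$ with the new data $(\phi,\lambda,\ell_0)$. Continuity and local boundedness of $b_1$ follow from $\bar b_1$ being Lipschitz and $\tilde b_1$ being bounded and Dini continuous, and \eqref{B2} on $b_0$ is imposed outright. Hence Assumption $({\bf A}_1)$ holds, and the first step yields \eqref{BH11}.

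The only mildly delicate bookkeeping is the middle regime $\ell_0'<|x-y|\le\ell_0$, where one must discard the negative dissipative contribution from $\bar b_1$ and merely check that $\phi(r)\ge \varphi(r)$; this is immediate from the chosen $\phi$. Strong well-posedness of \eqref{BH12} with the bounded Dini-continuous perturbation $\tilde b_1$ is a standard regularization-by-noise result and will be taken for granted. Overall, Corollary \ref{cor} is a transparent specialization of Theorem \ref{thm} with no new analytic difficulty beyond verifying the hypotheses.
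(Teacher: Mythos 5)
Your proposal is correct and takes essentially the same route as the paper: specialize Theorem \ref{thm} with $r_0=0$, $\tilde b=b$, $\theta_t=\bar\theta_t=t$ so that the integral term in \eqref{B14} vanishes and \eqref{BH11} drops out, then verify $({\bf A}_1)$ for $b_1=\bar b_1+\tilde b_1$ with the new modulus $\phi(r)=\lambda_0 r+\varphi(r)$ and an enlarged threshold chosen via $\lim_{r\to\8}\varphi(r)/r=0$. The paper merely adds what you gloss over: it cites \cite[Theorem 4.1]{RW} for strong well-posedness of \eqref{BH12} under $({\bf H})$ and notes that $({\bf A}_2)$ holds trivially.
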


Below, we move forward to dwell on applications of Theorem \ref{thm}, and answer the remaining questions proposed in the introductory  subsections, one by one.

\subsection{Applications}

\subsubsection{Uniform-in-time distribution distance between an SDE and its delay version} For convenience, we first recall SDEs \eqref{E3} and \eqref{E4}. In this subsection, we focus on the following SDE:
\begin{equation}\label{E3E}
\d X_t= \beta(\alpha-X_t)\d t+\si \d W_t+\si_0(X_t)\,\d B_t,\quad t>0
\end{equation}
with the initial value $X_0\in L^1(\OO\to\R^d; \mathscr F_0,\P)$,
where $\alpha\in\R^d,\beta>0$, $\si\in\R$, $\si_0:\R^d\to \R^d\otimes\R^d $, which satisfies \eqref{B2-1};
$(W_t)_{t\ge0}$ and  $(B_t)_{t\ge0}$ are independent $d$-dimensional Brownian motions. Let  $(Y_t)_{t>0}$ be the delay version of $(X_t)_{t\ge0}$, which is determined by the  SDE with memory:
\begin{equation}\label{E4E}
\d Y_t=\beta(\alpha-Y_{t-r_0})\d t+\si \d W_t+\si_0(Y_t)\,\d B_t,\quad t>0
\end{equation}
with the initial value $Y_{[-r_0,0]}=\xi\in L^1(\OO\to\C; \mathscr F_0,\P)$ satisfying that for some  $C_\xi>0$,
\begin{align}\label{E22}
\E|\xi_t-\xi_s|\le C_\xi|t-s|, \quad t,s\in[-r_0,0].
\end{align}
Evidently, both \eqref{E3E} and \eqref{E4E} are strongly well-posed.

The following statement shows that the distributions $(\mathscr L_{X_t})_{t\ge0}$ and $(\mathscr L_{Y_t})_{t\ge0}$ associated with \eqref{E3E} and \eqref{E4E} respectively close to each other when the time lag $r_0$ approaches zero, and most importantly, provides a quantitative characterization upon the distribution deviation.

\begin{theorem}\label{thm3}
Assume that  $\si_0$ satisfies \eqref{B2-1} and suppose further   $\si\neq 0$ and $\beta>0$.
Then, there exist   constants $C^*,L^*,\lambda^*>0$ such that for all $L\in[0,L^*]$ and $t\ge0$,
\begin{align}\label{BH6}
\mathbb W_1\big(\mathscr L_{X_t},\mathscr L_{Y_t}\big)&\le C^*\Big(\e^{-\lambda^* t}\mathbb W_1\big(\mathscr L_{X_0},\mathscr L_{Y_0}\big) + (1+r_0) \big(1+  \E\|\xi\|_\8\big) r_0^{\frac{1}{2}}\Big).
\end{align}
\end{theorem}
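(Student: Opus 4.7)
My plan is to cast \eqref{E3E}--\eqref{E4E} as a one-particle instance of the frameworks \eqref{E1}--\eqref{E2} and then to invoke Theorem \ref{thm}. Specifically, I set $N=1$, $b(x,\mu) = \tilde b(x,\mu) = \beta(\alpha-x)$ (so $b_0 \equiv 0$ and $b_1(x) = \beta(\alpha-x)$), $\theta_t = t - r_0$, and $\bar\theta_t = t$. Then $b_1$ fulfils $({\bf A}_1)$ with $\ell_0=0$ and dissipativity rate $\lambda = \beta$, the Lipschitz constant of $b_0$ is $K=0$ (so the $N^{-1/2}$ term in \eqref{B14} vanishes), and $({\bf A}_2)$ holds by hypothesis. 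Thus Theorem \ref{thm} reduces the target to
\begin{equation*}
\mathbb W_1(\mathscr L_{X_t}, \mathscr L_{Y_t}) \le C\bigg(e^{-\lambda^* t}\,\mathbb W_1(\mathscr L_{X_0}, \mathscr L_{Y_0}) + \beta\int_0^t e^{-\lambda^*(t-s)}\,\mathbb E|Y_s - Y_{s-r_0}|\,\d s\bigg).
\end{equation*}

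\textbf{Short-time displacement.} It remains to bound $\mathbb E|Y_s - Y_{s-r_0}|$ uniformly in $s \ge 0$. For $s > r_0$, rewriting
\begin{equation*}
Y_s - Y_{s-r_0} = \int_{s-r_0}^s \beta(\alpha - Y_{u-r_0})\,\d u + \si(W_s - W_{s-r_0}) + \int_{s-r_0}^s \si_0(Y_u)\,\d B_u,
\end{equation*}
Cauchy--Schwarz together with It\^o's isometry and \eqref{B2-1} yield
\begin{equation*}
\mathbb E|Y_s - Y_{s-r_0}| \lesssim \beta r_0\bigl(|\alpha| + \sup_{u\ge 0}\mathbb E|Y_u|\bigr) + |\si|\sqrt{d\,r_0} + \sqrt{L r_0}\,\bigl(1 + \sup_{u\ge 0}\mathbb E|Y_u|^2\bigr)^{1/2}.
\end{equation*}
For $s \in [0, r_0]$, the splitting $Y_s - \xi_{s-r_0} = (Y_s - \xi_0) + (\xi_0 - \xi_{s-r_0})$ together with \eqref{E22} reduces the contribution of the initial segment to the same type.

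\textbf{Main obstacle: uniform moment bound.} The crucial step is then a uniform-in-time second-moment bound of the form $\sup_{t \ge -r_0} \mathbb E|Y_t|^2 \le C(1 + r_0)(1 + \mathbb E\|\xi\|_\infty^2)$ for \eqref{E4E}. Applying It\^o's formula to $|Y_t|^2$ produces the cross term $-2\beta\,\mathbb E\langle Y_t, Y_{t-r_0}\rangle$, which after the decomposition $Y_{t-r_0} = Y_t - (Y_t - Y_{t-r_0})$ becomes $-2\beta\,\mathbb E|Y_t|^2 + 2\beta\,\mathbb E\langle Y_t, Y_t - Y_{t-r_0}\rangle$. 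The first summand provides linear dissipation, while the second is absorbed by the displacement estimate above combined with Young's inequality; the $L\,\mathbb E|Y_t|^2$ contribution coming from $\si_0$ is tamed by taking $L^*$ small enough, and a Gr\"onwall argument closes the loop. Feeding this moment control back into the displacement estimate gives $\mathbb E|Y_s - Y_{s-r_0}| \lesssim (1+r_0)(1 + \mathbb E\|\xi\|_\infty)\, r_0^{1/2}$ uniformly in $s \ge 0$; integrating against the exponential weight $e^{-\lambda^*(t-s)}$ yields an $O(1)$ constant and hence \eqref{BH6}. The uniform moment bound for a delay SDE is the genuinely delicate step, because delay equations lack the clean Lyapunov structure available for Markovian SDEs and the self-consistent use of the short-time displacement estimate is what makes the argument close.
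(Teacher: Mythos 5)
Your overall skeleton is the paper's: Theorem \ref{thm3} is deduced there exactly as you propose, by applying Theorem \ref{thm} with $N=1$, $b_0\equiv0$ (so $K=0$ and the $N^{-1/2}$ term drops), $\tilde b=b$, $\theta_t=t-r_0$, which reduces everything to the uniform displacement bound $\E|Y_t-Y_{t-r_0}|\lesssim (1+r_0)(1+\E\|\xi\|_\8)r_0^{1/2}$ of Lemma \ref{lemma0}; and the passage from a uniform second-moment bound to that displacement bound (H\"older, It\^o isometry, \eqref{B2-1}, and \eqref{E22} on $[0,r_0]$) is also the same. Where you genuinely diverge is how the uniform second-moment bound is obtained. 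The paper does not run an It\^o--Lyapunov argument on $|Y_t|^2$ at all: it uses the variation-of-constants formula \eqref{BH} (citing \cite{RRV}) together with the exponential decay $\|\Gamma_t\|_{\rm op}\le C_{\lambda_0}\e^{-\lambda_0 t}$ of the fundamental solution of $\d\Gamma_t=-\beta\Gamma_{t-r_0}\,\d t$ (citing \cite{BWY}), and then a single Gronwall step in which only $L\le L^*$ must be small; the delay enters only through the requirement that the roots of $\lambda+\beta\e^{-\lambda r_0}=0$ lie in the open left half-plane (implicitly, $\beta r_0<\pi/2$).

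Two points in your route need repair. First, the cross term $2\beta\,\E\<Y_t,Y_t-Y_{t-r_0}\>$ cannot be absorbed using the $L^1$ displacement bound you derived: you need its $L^2$ analogue via Cauchy--Schwarz, and that bound involves $\sup_{u\le t}\E|Y_u|^2$, so the resulting differential inequality is not of plain Gronwall type. One needs a Halanay/running-supremum argument, and it closes only when the coefficient of the supremum, of order $\beta(\beta^2r_0^2+Lr_0)$, is dominated by the dissipation rate of order $\beta$, i.e.\ under an explicit smallness condition on $\beta r_0$ in addition to $L\le L^*$. This quantitative restriction is exactly what ``a Gr\"onwall argument closes the loop'' conceals, and it is strictly stronger than what the paper's spectral route requires. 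Second, there is an integrability mismatch: only $\xi\in L^1(\OO\to\mathscr C;\mathscr F_0,\P)$ is assumed, so your unconditional bound $\sup_t\E|Y_t|^2\lesssim(1+r_0)(1+\E\|\xi\|_\8^2)$ may be vacuous, and even when finite it feeds back into the displacement estimate as $(\E\|\xi\|_\8^2)^{1/2}$ rather than $\E\|\xi\|_\8$, so \eqref{BH6} as stated does not follow. The paper avoids this by proving both the moment bound \eqref{BH2} and the displacement bound \eqref{BH3} conditionally on $\mathscr F_0$, so that $\|\xi\|_\8$ appears quadratically only inside the conditional estimate and only its first moment survives after taking the square root and then expectations; your argument can be repaired by running it conditionally on $\mathscr F_0$ in the same way, but as written the final estimate is not reached.
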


\begin{remark}
Since the distribution of $(Y_t)_{t\ge0}$ is dependent on the segment $\xi\in L^1(\OO\to\C; \mathscr F_0,\P)$,   it is reasonable that the error bound on  the right hand side of \eqref{BH6} depends on $\E\|\xi\|_\8$ rather than $\E|\xi_0|$.
\end{remark}

\subsubsection{Uniform-in-time discretization error bounds for stochastic algorithms} \label{ssec3}
In this subsection, we focus on the McKean-Vlasov SDE \eqref{EW}, where the drift term is of super-linear  growth and dissipative in the long distance with respect to the spatial variables. As direct applications of Theorem \ref{thm},
we shall tackle  uniform-in-time discretization error bounds for the backward EM scheme, the tamed EM scheme, and the adaptive EM scheme, which are constructed   in \eqref{EW1}, \eqref{EW2}, and \eqref{EW5}, respectively.

In addition to Assumption  (${\bf A}_1$), we further need to suppose that the drift term $b_1$ is smooth and
 locally Lipschitz, which is stated precisely as below.
\begin{enumerate}
\item[$( {\bf A}_3)$] $x\mapsto b_1(x)$ is a $C^1$-function and  there exist constants $l^*\ge0, K^\star>0$ such that 
\begin{equation}\label{EE1}
 |b_1(x)-b_1(y)|\le K^{ \star}\big(1+|x|^{l^*}+|y|^{l^*}\big)|x-y|,\quad x,y\in\R^d.
\end{equation}
 \end{enumerate}

 Under Assumption (${\bf A}_1$) with $\phi(r)=\ll_0 r$ for some $\ll_0>0$, \eqref{EW} admits a unique strong solution for $X_0\in L^1(\OO\to\R^d;\mathscr F_0,\P)$; see, for example, \cite[Theorem 2.1]{Wanga} for more details.   Moreover,  note that  the discrete time version of \eqref{EW1}
 is indeed an implicit equation. Whereas,
 under \eqref{B1} with $\phi(r)=\ll_0 r$ and \eqref{B2}, the algorithm \eqref{EW1} is well defined
 as long as the step size $\delta\in(0,1/(2(\ll_0+K)))$; see, for instance, \cite[Lemma 3.4]{HMS} for related details.

With regard to the backward EM scheme, given in \eqref{EW1}, the long time error bound under the $L^1$-Wasserstein distance can be presented as follows. 	
\begin{theorem} \label{thm4}
Assume   $({\bf A}_1)$ with $\phi(r)=\ll_0 r$ for some $\ll_0>0$ and $\lambda>  2K $, and suppose further $\si \neq 0$ and $({\bf A}_3)$. Then, there exist constants $C^*,\ll ^*>0$ such that for all  $i\in \mathbb S_N$,   $t\ge0,$  and   $\delta\in(0,\delta^*]$,
\begin{equation}\label{F3-1}
\begin{split}
	\W_1\big(\mathscr L_{X_t^i},\mathscr L_{X_t^{\delta,i,N}}\big)\le C^* \bigg\{&\e^{-\ll ^* t}\W_1\big(\mathscr L_{X_0^i},\mathscr L_{X_0^{\delta,i,N}}\big)+\frac{1}{\sq{N}}\mathds{1}_{\{K>0\}}\\
 &+\Big(1+  \E\big|X_0^{\delta,i, N}\big|^{(l^*+1)^2}\Big)\delta^{\frac{1}{2}}\bigg\}
 \end{split}
\end{equation}
in case of  $\E|X_0^i|^2 <\8$ and $\E\big|X_0^{\delta,i, N}\big|^{(l^*+1)^2}<\8$. In the above,
\begin{align*}\label{HB5}
 \delta^*:=1\wedge \frac{1}{2(\ll_0+K)}\wedge \frac{3(\lambda-2K)}{4(\lfloor(1\vee l^*)(1+l^*)\rfloor+1)  (1+K)^{\lfloor(1\vee l^*)(1+l^*)\rfloor+1}}.
\end{align*}
\end{theorem}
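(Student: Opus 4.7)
My plan is to apply Theorem \ref{thm} directly, with the identifications $r_0 = 0$, $\si_0 \equiv {\bf 0}$ (so $L = 0 \in [0,L^*]$), $\tilde b = b$, $\theta_s = s_\delta + \delta$, and $\bar\theta_s = s_\delta$, and then to reduce the problem to a one-step displacement estimate for the backward EM recursion. The initial-data factor $\e^{-\ll^* t}\W_1(\mathscr L_{X_0^i},\mathscr L_{X_0^{\delta,i,N}})$ and the particle-number term $N^{-1/2}\mathds{1}_{\{K>0\}}$ are delivered immediately by Theorem \ref{thm}; the real work sits in bounding
\begin{equation*}
\int_0^t \e^{-\ll^*(t-s)}\,\E\bigl| b(X_s^{\delta,i,N}, \tilde\mu_s^{\delta,N}) - b(X_{s_\delta+\delta}^{\delta,i,N}, \tilde\mu_{s_\delta}^{\delta,N})\bigr|\,\d s
\end{equation*}
by $C\delta^{1/2}(1 + \E|X_0^{\delta,i,N}|^{(l^*+1)^2})/\ll^*$, after which $\int_0^t \e^{-\ll^*(t-s)}\,\d s \le 1/\ll^*$ closes the estimate.

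First I would record the within-step identity obtained by integrating \eqref{EW1} on $[s_\delta, s_\delta+\delta)$; the integrand is constant on that interval and equal to its implicit value at $s_\delta + \delta$, so
\begin{equation*}
X_s^{\delta,i,N} - X_{s_\delta+\delta}^{\delta,i,N} = (s - s_\delta - \delta)\,b(X_{s_\delta+\delta}^{\delta,i,N}, \tilde\mu_{s_\delta}^{\delta,N}) + \si\bigl(W_s^i - W_{s_\delta+\delta}^i\bigr).
\end{equation*}
Combined with the growth $|b(x,\mu)| \le C(1 + |x|^{l^*+1} + \mu(|\cdot|))$ deduced from \eqref{EE1} and \eqref{B2}, Minkowski then yields, for every $q \ge 1$,
\begin{equation*}
\E|X_s^{\delta,i,N} - X_{s_\delta+\delta}^{\delta,i,N}|^q \le C_q\,\delta^{q/2} + C_q\,\delta^q\bigl(1 + \E|X_{s_\delta+\delta}^{\delta,i,N}|^{q(l^*+1)}\bigr),
\end{equation*}
so the Brownian increment supplies the dominant $\delta^{1/2}$ scale while the drift contributes only a lower-order $\delta$ correction.

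Next I would establish, for every $2k \le (l^*+1)^2$, the uniform-in-time moment bound
\begin{equation*}
\sup_{\delta \in (0,\delta^*]}\,\sup_{t\ge 0}\,\max_{i \in \mathbb S_N}\,\E|X_t^{\delta,i,N}|^{2k} \le C_k\bigl(1 + \E|X_0^{\delta,i,N}|^{2k}\bigr).
\end{equation*}
The implicit backward step preserves the long-range dissipativity of $b_1$ inherited from \eqref{B1} with $\phi(r)=\ll_0 r$ (which amounts to $\langle x,b_1(x)\rangle \le C - \ll|x|^2$), while the hypothesis $\ll > 2K$ leaves a strictly positive effective contraction rate once the Lipschitz interaction $b_0 \ast \mu$ is controlled via exchangeability. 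The explicit threshold $\delta^*$ in the statement is calibrated precisely to absorb the $(1+K)^{\lfloor (1\vee l^*)(1+l^*)\rfloor + 1}$-type combinatorial constants appearing when $|x-\delta b|^{2k}$ is expanded in the induction on $n$; for classical SDEs this propagation step is essentially standard (cf.\ \cite{HMS}), and the only new feature is to spread it to the interacting system by symmetry across particles.

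Finally, with $b = b_1 + b_0 \ast \cdot$, the local Lipschitz property \eqref{EE1} and the Lipschitz property \eqref{B2} give
\begin{align*}
\E\bigl|b(X_s^{\delta,i,N}, \tilde\mu_s^{\delta,N}) &- b(X_{s_\delta+\delta}^{\delta,i,N}, \tilde\mu_{s_\delta}^{\delta,N})\bigr|\\
&\le K^\star\,\E\bigl[\bigl(1 + |X_s^{\delta,i,N}|^{l^*} + |X_{s_\delta+\delta}^{\delta,i,N}|^{l^*}\bigr)|X_s^{\delta,i,N} - X_{s_\delta+\delta}^{\delta,i,N}|\bigr]\\
&\quad + K\,\E|X_s^{\delta,i,N} - X_{s_\delta+\delta}^{\delta,i,N}| + K\,\E\,\W_1(\tilde\mu_s^{\delta,N}, \tilde\mu_{s_\delta}^{\delta,N}).
\end{align*}
H\"older with conjugate exponents $(l^*+1)/l^*$ and $l^*+1$ applied to the first summand, combined with the one-step displacement estimate at $q = l^*+1$ (producing the moment order $q(l^*+1) = (l^*+1)^2$), together with the pointwise bound $\W_1(\tilde\mu_s^{\delta,N},\tilde\mu_{s_\delta}^{\delta,N}) \le \frac{1}{N}\sum_j|X_s^{\delta,j,N} - X_{s_\delta}^{\delta,j,N}|$ handled by exchangeability for the third summand, yield the announced $C\delta^{1/2}(1 + \E|X_0^{\delta,i,N}|^{(l^*+1)^2})$ bound, and substituting into Theorem \ref{thm} gives \eqref{F3-1}. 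The main obstacle I anticipate is the matching in the last two steps: the H\"older exponent $q = l^*+1$ must be chosen so that the $L^q$-displacement calls for exactly $(l^*+1)^2$ moments, and the $\delta^*$ threshold in the statement is precisely what lets the backward induction deliver those moments while keeping $\ll - 2K$ alive as a strict contraction constant at every power level used.
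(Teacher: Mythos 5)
Your proposal is correct and follows essentially the same route as the paper's proof: apply Theorem \ref{thm} with $r_0=0$, $\tilde b=b$, $\theta_s=s_\delta+\delta$, $\bar\theta_s=s_\delta$, establish a uniform moment bound for the implicit scheme by induction on the recursion (the paper's Lemma \ref{lemma3}, whose step-size restriction is exactly the $\delta^*$ calibration you describe), and control the drift mismatch via the within-step displacement identity plus H\"older and exchangeability, arriving at the same $(l^*+1)^2$ moment order (the paper runs this last step with Cauchy--Schwarz conditionally on $\mathscr F_0^N$ rather than your $\bigl((l^*+1)/l^*,\,l^*+1\bigr)$ exponents, an immaterial difference). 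One small correction: state the intermediate uniform moment bound only at grid points, or with the inflated exponent as in the paper's \eqref{F}, because at off-grid times the drift term makes an order-$2k$ moment of $X_t^{\delta,i,N}$ cost initial moments of order $2k(l^*+1)$ rather than $2k$; since your final H\"older step only uses the grid bound at order $(l^*+1)^2$ and off-grid moments at order $l^*+1$, this does not affect the rest of the argument.
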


\begin{remark}
Consider the SDE \eqref{EW} with $b(x,\mu)=b(x)$ (so \eqref{EW} is indeed a distribution-independent SDE). For this setting, via the Banach fixed point theorem, \cite[Corollary 2.3]{Eberle} implies that the corresponding solution process $(X_t)_{t\ge0}$ has a unique invariant probability measure, written as  $\pi.$ Once the the time-homogeneous Markov chain $(X^{\delta}_{k\delta})_{k\ge0}$ (which is determined by the backward EM scheme) possesses a unique invariant probability measure, denoted by  $\pi^{(\delta)}$,
Theorem \ref{thm4} enables us to deduce that $\mathbb W_1(\pi,\pi^{(\delta)})\le c\,\delta^{\frac{1}{2}}$ for some constant $c>0$. This reveals the quantitative estimate between the invariant probability measure and its numerical version for SDEs with partially dissipative drifts.
\end{remark}

Next, we apply Theorem \ref{thm} to the tamed EM scheme.
Furthermore, we suppose that
 \begin{enumerate}
\item[$({\bf A}_1')$]  
 there exist constants $\ll_{b_1},\hat\ll_{b_1},C_{b_1},\hat C_{b_1}>0$ such that for any $x\in \R^d$,
\begin{equation}\label{E*3}
\<x,b_1(x)\>\le -\ll_{b_1}|x|^2\cdot \|\nabla b_1(x)\|_{\rm HS}+C_{b_1},\qq|b_1(x)|\le \hat\lambda_{b_1}|x|\cdot\|\nabla b_1(x)\|_{\rm HS}+\hat C_{b_1}.
\end{equation}
Moreover, for some constant  $\alpha>0,$ there is   an  $ R_\alpha>0$ such that for $x\in\R^d$ with $|x|\ge R_\alpha,$
\begin{align}\label{**}
\|\nn b_1(x)\|_{\rm HS}\ge \alpha.
\end{align}
 \end{enumerate}

\begin{remark}
The second prerequisite in \eqref{E*3}
 is evidently satisfied when $b_1$ is of polynomial growth. Obviously, for $b_1(x)=x-x^\ell, x\in\R, $ with $\ell$ being an odd number $\ell\ge1$, the first technical condition in \eqref{E*3} and  the one in \eqref{**} are valid, separately. Moreover, in Assumption $({\bf A}_1')$
  , the gradient of $b_1$ is involved based on the construction of the tamed EM scheme presented  below.
\end{remark}

The tamed EM scheme associated with \eqref{EW}  is constructed as follows: for $\delta>0,$
\begin{equation}\label{BH14}
\mathrm{d} X_t^{\dd,i, N}=\big(b^\dd_1(X_{t_\dd}^{\dd,i, N})+(b_0\ast\tt\mu_{t_\dd}^{\dd,N})(X_{t_\dd}^{\dd,i, N})\big)\,\d t+\si  \mathrm{d} W_t^i,\qq i\in \mathbb S_N, \quad t > 0,
\end{equation}
where for any  $x\in\R^d,$
 \begin{equation*}\label{E*4}
	b_1^\dd(x):=\ff {b_1(x)}{1+\dd^{\frac{1}{2}} \|\nabla b_1(x)\|_{\rm HS}}.
\end{equation*}
Moreover, for brevity, we set for $\kk\ge 0$ and $\rho:=4K( K+ \hat\lambda_{b_1})$,
\begin{equation}\label{F5}
 \delta^*_\kk:=1\wedge  \alpha^{-\frac{1}{2}} \wedge \frac{\lambda_{b_1}^2}{\hat\lambda_{b_1}^4} \wedge
 \frac{\kk^2}{(2\alpha(2K+ \rho(1+1/\alpha)+\hat\lambda_{b_1}^2)^2 } \wedge \frac{1}{\kk/4+ K+\rho }.
\end{equation}

 Concerning the tamed EM scheme \eqref{BH14}, we have the following  discretization error bounds in an infinite-time horizon.
\begin{theorem}\label{thm5}
Assume Assumptions $({\bf A}_1)$ with $\phi(r)=\ll_0 r$ for some $\ll_0>0$ and $({\bf A}_1')$,   and suppose further 
$\kk:= \alpha\lambda_{b_1}-2 K>0$, $\ll>2 K$, $\si \neq 0$, as well as
$({\bf A}_3)$.  Then, there exist constants $C^*,\ll ^*>0$ such that for all      $\delta\in(0,\delta^*_\kk]$, $i\in \mathbb S_N$, and $t\ge0,$
\begin{equation}\label{F3}
	\W_1\big(\mathscr L_{X_t^i},\mathscr L_{X_t^{\delta,i,N}}\big)\le C^* \bigg\{\e^{-\ll ^* t}\W_1\big(\mathscr L_{X_0^i},\mathscr L_{X_0^{\delta,i,N}}\big)+\frac{1}{\sq{N}}\mathds{1}_{\{K>0\}} +\big(1+  \E\big|X_0^{\delta,i, N}\big|^{2l^*+1 }\big)\delta^{\frac{1}{2}}\bigg\}
\end{equation}
as long as   $\E|X_0^i|^{ 2}<\8$ and $\E\big|X_0^{\delta,i, N}\big|^{2l^*+1 }<\8$.
\end{theorem}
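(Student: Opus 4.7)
The plan is to reduce Theorem~\ref{thm5} to the master inequality \eqref{B14} in Theorem~\ref{thm} by recognising \eqref{BH14} as an instance of \eqref{E2}: set $\sigma_0\equiv\mathbf 0$, $r_0=0$, $\theta_t=\bar\theta_t=t_\delta$, and $\tilde b(x,\mu)=b_1^\delta(x)+(b_0\ast\mu)(x)$, so that the ``$b$ versus $\tilde b$'' error term in \eqref{B14} becomes
\begin{equation*}
\mathcal R_s^{i,N}:=\E\bigl|b_1(X_s^{\delta,i,N})-b_1^\delta(X_{s_\delta}^{\delta,i,N})\bigr|+\E\bigl|(b_0\ast\tilde\mu_s^{\delta,N})(X_s^{\delta,i,N})-(b_0\ast\tilde\mu_{s_\delta}^{\delta,N})(X_{s_\delta}^{\delta,i,N})\bigr|.
\end{equation*}
Provided the smallness constraint $K\le K^*$ built into Theorem~\ref{thm} is consistent with $\kappa=\alpha\lambda_{b_1}-2K>0$ (which follows from the admissible range of $\delta^*_\kappa$ in \eqref{F5}), it suffices to establish the two bounds $\mathcal R_s^{i,N}\le C(1+\E|X_0^{\delta,i,N}|^{2l^*+1})\delta^{1/2}$ uniformly in $s\ge 0$, $N\ge 1$, and $i\in\mathbb S_N$; the exponential decay $\e^{-\lambda^*(t-s)}$ in \eqref{B14} then absorbs the time-integral and yields \eqref{F3}.

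For the non-interacting part I would split
\begin{equation*}
b_1(X_s^{\delta,i,N})-b_1^\delta(X_{s_\delta}^{\delta,i,N})=\bigl[b_1(X_s^{\delta,i,N})-b_1(X_{s_\delta}^{\delta,i,N})\bigr]+\bigl[b_1(X_{s_\delta}^{\delta,i,N})-b_1^\delta(X_{s_\delta}^{\delta,i,N})\bigr].
\end{equation*}
The first bracket is controlled via $({\bf A}_3)$ together with a one-step continuity estimate $\E|X_s^{\delta,i,N}-X_{s_\delta}^{\delta,i,N}|^p\lesssim \delta^{p/2}(1+\E|X_{s_\delta}^{\delta,i,N}|^{pl^*})$, produced by freezing coefficients on $[s_\delta,s]$ and exploiting that the tamed drift is at most linear (so moments of $b_1^\delta(X_{s_\delta}^{\delta,i,N})$ are integrable against the Gaussian increment). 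The second bracket is the ``taming error''; from the definition of $b_1^\delta$ and the second inequality in \eqref{E*3},
\begin{equation*}
|b_1(x)-b_1^\delta(x)|=\frac{\delta^{1/2}\|\nabla b_1(x)\|_{\rm HS}|b_1(x)|}{1+\delta^{1/2}\|\nabla b_1(x)\|_{\rm HS}}\le \delta^{1/2}|b_1(x)|^2/\hat\lambda_{b_1}\cdot\hat\lambda_{b_1}\cdots
\end{equation*}
which after using $({\bf A}_3)$ gives a bound of the form $\delta^{1/2}(1+|x|^{2l^*+1})$. The interaction part is handled similarly: Lipschitz continuity of $b_0$ and i.i.d.\ structure of the particle indices immediately reduces everything to the same one-step continuity estimate.

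The main obstacle is a uniform-in-time and uniform-in-$N$ bound $\sup_{t\ge 0}\max_i\E|X_t^{\delta,i,N}|^{p}\lesssim 1+\E|X_0^{\delta,i,N}|^{p}$ for $p$ as large as $2l^*+1$, together with the one-step estimate mentioned above. This is where the conditions in $({\bf A}_1')$ and the precise calibration of $\delta^*_\kappa$ in \eqref{F5} pay off: applying It\^o's formula to $|X_t^{\delta,i,N}|^{p}$ produces a term $\langle X_t^{\delta,i,N},b_1^\delta(X_{t_\delta}^{\delta,i,N})\rangle$, which must be handled by writing it as $\langle X_{t_\delta}^{\delta,i,N},b_1^\delta(X_{t_\delta}^{\delta,i,N})\rangle$ plus a remainder of size $\delta^{1/2}$; the first inequality in \eqref{E*3} then yields a negative drift of order $-\lambda_{b_1}|x|^2\|\nabla b_1\|_{\rm HS}/(1+\delta^{1/2}\|\nabla b_1\|_{\rm HS})$, which combined with the lower bound \eqref{**} for $|x|\ge R_\alpha$ beats the fluctuations coming from $\sigma^2$, the interaction term $b_0\ast\tilde\mu$, and the one-step error, precisely when $\kappa>0$ and $\delta\le\delta^*_\kappa$.

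Assembling these pieces yields $\mathcal R_s^{i,N}\le C(1+\E|X_0^{\delta,i,N}|^{2l^*+1})\delta^{1/2}$; substituting into \eqref{B14} and integrating $\int_0^t\e^{-\lambda^*(t-s)}\d s\le 1/\lambda^*$ delivers \eqref{F3}, with $\delta^*_\kappa$ as in \eqref{F5} and $\lambda^*$ inherited from Theorem~\ref{thm} (possibly reduced to accommodate the dissipativity requirement $\lambda>2K$ and $\kappa>0$).
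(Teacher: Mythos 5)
Your proposal is correct in substance and, for the reduction step, coincides with the paper's argument: both apply Theorem \ref{thm} with $r_0=0$, $\theta_t=\bar\theta_t=t_\delta$, $\sigma_0\equiv{\bf 0}$ and $\tilde b(x,\mu)=b_1^\delta(x)+(b_0\ast\mu)(x)$, split the drift error into a one-step continuity part, a taming part $|b_1-b_1^\delta|\le\delta^{1/2}\|\nabla b_1\|_{\rm HS}\,|b_1|\lesssim\delta^{1/2}(1+|x|^{2l^*+1})$ (via $({\bf A}_3)$; your intermediate display invoking the second inequality of \eqref{E*3} is garbled, but the stated conclusion is the one the paper uses), and an interaction part handled by the Lipschitz property of $b_0$ and the conditional exchangeability of the particles, and then integrate against $\e^{-\lambda^*(t-s)}$. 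Where you genuinely diverge is the key uniform-in-time moment bound. The paper's Lemma \ref{lemma4} works entirely in discrete time: it expands $|X_{(n+1)\delta}^{\delta,i,N}|^2$ from the recursion, uses the first inequality of \eqref{E*3} together with \eqref{**} and the calibration of $\delta^*_\kappa$ in \eqref{F5} to extract a per-step contraction factor $(1-\kappa\delta/4)$, and then propagates this to $|X_{(n+1)\delta}^{\delta,i,N}|^{2p}$ by the binomial theorem and induction; moments at off-grid times are recovered afterwards from the one-step bound $|b_1^\delta(x)|\lesssim 1+\delta^{-1/2}|x|$. You instead propose a continuous-time It\^o/Lyapunov argument on the interpolated process, replacing $\langle X_t,b_1^\delta(X_{t_\delta})\rangle$ by $\langle X_{t_\delta},b_1^\delta(X_{t_\delta})\rangle$ plus a remainder; this is viable (it is essentially what the paper does for the adaptive scheme in Lemma \ref{lemma5}), but be aware of two points. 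First, the remainder is not simply "of size $\delta^{1/2}$": the term $\delta|b_1^\delta(X_{t_\delta})|^2$ is of order $\hat\lambda_{b_1}^2\delta^{1/2}\,\|\nabla b_1(X_{t_\delta})\|_{\rm HS}|X_{t_\delta}|^2/(1+\delta^{1/2}\|\nabla b_1(X_{t_\delta})\|_{\rm HS})$, i.e.\ it is only small relative to the $\|\nabla b_1\|$-weighted negative drift coming from \eqref{E*3}; this comparison is exactly what the constraint $\delta\le\lambda_{b_1}^2/\hat\lambda_{b_1}^4$ in \eqref{F5} (together with \eqref{**} and $\kappa>0$) is designed for, so your sketch must keep the common prefactor rather than absorb it crudely. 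Second, for $p>1$ the It\^o route also produces cross terms between $|X_t|^{2(p-1)}$ and the frozen drift, which must themselves be pulled back to time $t_\delta$ (as in the expansions used for the adaptive scheme); the discrete binomial induction avoids this bookkeeping, which is what the paper's choice buys. Your one-step estimate should read $\E|X_s^{\delta,i,N}-X_{s_\delta}^{\delta,i,N}|^p\lesssim\delta^{p/2}(1+\E|X_{s_\delta}^{\delta,i,N}|^{p})$ (the tamed drift is linearly, not polynomially, growing after taming), but this discrepancy is harmless for the final moment requirement $\E|X_0^{\delta,i,N}|^{2l^*+1}<\infty$, which is driven by the taming error.
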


Finally, we  apply Theorem \ref{thm} to the adaptive EM scheme  \eqref{EW4} with the adaptive step size
\begin{align}\label{T7}
h^\delta_n:=\dd\min\bigg\{\frac{1}{1+|b(X_{ t_n}^{\dd,1,N},\tt\mu^{\delta,N}_{t_n})|^2},\cdots,\frac{1}{1+|b(X_{ t_n}^{\dd,N,N},\tt\mu^{\delta,N}_{t_n})|^2}\bigg\},\quad \delta\in(0,1).
\end{align}
Since, in  this paper, we are interested in the error analysis in an infinite-time horizon, the time grid $t_{n+1}=t_n+h_n^\delta$
should go  to infinity almost surely. This  can be  examined in  Lemma \ref{le} below.

 As far as the continuous-time version of \eqref{EW4}, defined accordingly in \eqref{EW5}, is concerned, the uniform discretization error bound is revealed as follows.

\begin{theorem} \label{thm6}
Assume   Assumptions $({\bf A}_1)$ with $\phi(r)=\ll_0 r$ and $\lambda>2K$, and suppose further $({\bf A}_3)$ and $\si \neq 0$.  Then, there exist constants $C^*,\ll ^*>0$ such that for all  $\delta\in(0,1),$  $i\in \mathbb S_N$, and $t\ge0,$
\begin{equation}\label{F8}
	\W_1\big(\mathscr L_{X_t^i},\mathscr L_{X_t^{\delta,i,N}}\big)\le C^* \bigg\{\e^{-\ll ^* t}\W_1\big(\mathscr L_{X_0^i},\mathscr L_{X_0^{\delta,i,N}}\big)+\frac{1}{\sq{N}}\mathds{1}_{\{K>0\}} +\big(1+  \E\big|X_0^{\delta,i, N}\big|^{  l^* }\big)\delta^{\frac{1}{2}}\bigg\}
\end{equation}
as long as   $\E|X_0^i|^{ 2}<\8$ and $\E\big|X_0^{\delta,i, N}\big|^{{1\vee l^*}}<\8$.
\end{theorem}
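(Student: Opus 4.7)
The strategy is to recognize the continuous-time interpolation \eqref{EW5} as a special case of the framework \eqref{E2} and then invoke Theorem \ref{thm}. Concretely, take $\tilde b = b$, $r_0 = 0$, $\sigma_0 \equiv \mathbf 0_{d\times d}$ and $\theta_t = \bar\theta_t = \underline t$, so that \eqref{EW} and \eqref{EW5} fit \eqref{E1} and \eqref{E2} respectively with $L = 0$. Assumption $({\bf A}_1)$ with $\phi(r) = \lambda_0 r$ gives the required dissipativity, and Assumption $({\bf A}_3)$ yields the local Lipschitz estimate
\begin{equation*}
|b(x,\mu) - b(y,\nu)| \le K^\star(1+|x|^{l^*}+|y|^{l^*})|x-y| + K\,\mathbb W_1(\mu,\nu),\quad x,y\in\R^d,\ \mu,\nu\in\mathscr P(\R^d),
\end{equation*}
by combining $({\bf A}_3)$ with \eqref{B2}. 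Applying Theorem \ref{thm} reduces \eqref{F8} to controlling the integrated error
\begin{equation*}
\mathcal I(t) := \int_0^t e^{-\lambda^*(t-s)}\E\bigl|b(X_s^{\delta,i,N},\tilde\mu_s^{\delta,N}) - b(X_{\underline s}^{\delta,i,N},\tilde\mu_{\underline s}^{\delta,N})\bigr|\,\d s.
\end{equation*}

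To bound $\mathcal I(t)$, I would first exploit the adaptive step bound. Since $h_n^\delta (1+|b(X_{t_n}^{\delta,i,N},\tilde\mu_{t_n}^{\delta,N})|^2) \le \delta$, we have $h_n^\delta |b(\cdot)| \le \delta/2$ (using $r/(1+r^2)\le 1/2$) and $h_n^\delta \le \delta$. Writing $X_s^{\delta,i,N} - X_{\underline s}^{\delta,i,N} = b(X_{\underline s}^{\delta,i,N},\tilde\mu_{\underline s}^{\delta,N})(s-\underline s) + \sigma(W_s^i - W_{\underline s}^i)$ and applying the strong Markov property at the stopping time $\underline s$ (so that $W_s^i - W_{\underline s}^i$ conditional on $\mathscr F_{\underline s}$ is a Brownian increment of length at most $h_n^\delta \le \delta$), one gets $\E[|W_s^i - W_{\underline s}^i| \mid \mathscr F_{\underline s}] \le C\sqrt{\delta}$. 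Exchangeability of the particles gives $\E\,\mathbb W_1(\tilde\mu_s^{\delta,N},\tilde\mu_{\underline s}^{\delta,N}) \le \E|X_s^{\delta,1,N} - X_{\underline s}^{\delta,1,N}|$, so the full integrand is controlled by quantities of the form $\E(1+|X_{\underline s}^{\delta,i,N}|^{l^*})\cdot(\delta + \sqrt\delta)$.

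The substantive technical step is then establishing the uniform-in-time moment bound
\begin{equation*}
\sup_{s\ge 0}\E\bigl|X_s^{\delta,i,N}\bigr|^{1\vee l^*} \le C\bigl(1+\E\bigl|X_0^{\delta,i,N}\bigr|^{1\vee l^*}\bigr)
\end{equation*}
uniformly in $\delta\in(0,1)$. This is where the condition $\lambda > 2K$ enters: a Lyapunov argument applied to $V(x) = |x|^{1\vee l^*}$ (or a smoothed variant) together with the dissipativity from \eqref{B1}, the Lipschitz bound \eqref{B2} on $b_0$, and the fact that $h_n^\delta|b|\le \delta/2$ controls the freeze-at-$\underline s$ discrepancy, should furnish a geometric decay toward a ball, giving uniform moments. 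One also needs to ensure that $t_n\to\infty$ almost surely so that \eqref{EW5} makes sense on $[0,\infty)$; this is invoked from Lemma \ref{le}.

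Assembling these ingredients yields $\E|b(X_s^{\delta,i,N},\tilde\mu_s^{\delta,N}) - b(X_{\underline s}^{\delta,i,N},\tilde\mu_{\underline s}^{\delta,N})| \le C(1+\E|X_0^{\delta,i,N}|^{l^*})\sqrt\delta$, whence $\mathcal I(t) \le C(\lambda^*)^{-1}(1+\E|X_0^{\delta,i,N}|^{l^*})\sqrt\delta$, delivering \eqref{F8}. The main obstacle I expect is the uniform moment bound: since $\underline s$ is a random time and the adaptive step $h_n^\delta$ is path-dependent, the usual Itô/Lyapunov computation has to be carried out piecewise between consecutive $t_n$'s with careful use of conditioning, and the superlinear growth of $b_1$ (controlled only via $\phi(r) = \lambda_0 r$) needs to be reconciled with the $|b|^{-2}$ damping in the step size to produce a clean recursion.
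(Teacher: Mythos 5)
Your overall route coincides with the paper's: reduce via Theorem \ref{thm} with $r_0=0$, $\theta_t=\bar\theta_t=\underline t$ and $\tilde b=b$; control the one-step error by writing $X_s^{\delta,i,N}-X_{\underline s}^{\delta,i,N}=b(X_{\underline s}^{\delta,i,N},\tilde\mu_{\underline s}^{\delta,N})(s-\underline s)+\si(W_s^i-W_{\underline s}^i)$ and exploiting the adaptive-step definition \eqref{T7} together with the strong Markov property; use exchangeability for the empirical-measure part; and cite Lemma \ref{le} for $t_n\to\8$. This is precisely the paper's skeleton (its \eqref{F9}--\eqref{F11}, resting on the bound \eqref{T11}).

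The genuine gap is that the substantive content of the paper's argument --- the uniform-in-time moment bound for the adaptive scheme, Lemma \ref{lemma5} --- is left by you at the level of an expectation (``should furnish a geometric decay''). This is exactly the step the paper flags as nontrivial: because the step size is a path-dependent random variable, the inductive recursions used for the backward and tamed schemes (Lemmas \ref{lemma3} and \ref{lemma4}) do not apply, and the paper instead runs It\^o's formula on $\e^{\lambda_p t}|X_t^{\delta,i,N}|^{2p}$, re-expands $|X_t^{\delta,i,N}|^{2(p-1)}$ and $\<X_t^{\delta,i,N},b(X_{\underline t}^{\delta,i,N},\tilde\mu_{\underline t}^{\delta,N})\>$ around $X_{\underline t}^{\delta,i,N}$, and absorbs all cross terms via $|b(X_{\underline t}^{\delta,i,N},\tilde\mu_{\underline t}^{\delta,N})|(t-\underline t)\le\delta^{1/2}(t-\underline t)^{1/2}\le1$, the strong Markov/tower property and Young's inequality. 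There is also a mismatch in the order of moments you target: an unconditional bound at order $1\vee l^*$ does not by itself decouple the product $\E\big[(1+|X_s^{\delta,i,N}|^{l^*}+|X_{\underline s}^{\delta,i,N}|^{l^*})|X_s^{\delta,i,N}-X_{\underline s}^{\delta,i,N}|\big]$ by Cauchy--Schwarz; the paper proves the moment bound \eqref{b3} for all even orders, conditionally on $\mathscr F_0^N$, and then uses conditional H\"older (its \eqref{F11}) so that only $\E|X_0^{\delta,i,N}|^{l^*}$ survives in \eqref{F8}. Your finer pathwise decomposition (splitting $|X_s^{\delta,i,N}|^{l^*}$ through $X_{\underline s}^{\delta,i,N}$, a drift increment of size at most $\delta/2$, and the Brownian increment, then conditioning at $\underline s$) could plausibly avoid Cauchy--Schwarz at the cost of only $l^*$-order moments, but either way you must actually prove the uniform moment bound for the adaptively stepped process; until an analogue of Lemma \ref{lemma5} is supplied, the proof is incomplete.
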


\begin{remark}
Since the discretization error is investigated under the $L^1$-Wasserstein distance, it is logical to require that the initial distribution for the algorithm under consideration has a finite moment of the first order. This indeed takes place in case of $l^*=0$ (which corresponds to the globally Lipschitz case for the drift involved) as demonstrated in Theorems \ref{thm4}, \ref{thm5} and \ref{thm6}. Nevertheless, concerning stochastic algorithms associated with McKean-Vlasov SDEs with drifts of super-linear growth with respect to spatial variables, it is quite natural to enhance the moment order  for  initial distributions. With contrast to the tamed/adaptive  EM scheme, higher order moments need to be imposed on the initial distributions for the backward EM scheme due to the fact that the tamed drift or adaptive step size can offset  growth of the original drift  in a certain sense.

Furthermore, we would like to say a few words on the noise term $\si_0$ in \eqref{E1}.  Once $\si_0$ does not vanish, the non-interacting particle system and the corresponding numerical version enjoy different noise terms. For this setup, the asymptotic coupling by reflection will no longer work
to investigate long time error bounds for stochastic algorithms. Based on this, in the present work, we focus merely on the additive noise case in lieu of the multiplicative noise setting.
\end{remark}

  The remainder part of this paper is organized as follows. Based on some preliminaries, in Section \ref{sec2}, we complete the proof of Theorem \ref{thm}
  by constructing an appropriate asymptotic coupling by reflection, and meanwhile finish the proof of Corollary \ref{cor}.
  Section \ref{sec3} is devoted to the proof of Theorem \ref{thm3}, where the variation-of-constants formula for semi-linear SDEs with memory plays a crucial role. In the  final section, with the aid of  uniform-in-time moment estimates for backward/tamed/adaptive EM schemes (where the underlying  proofs are rather tricky),
   we aim to implement proofs of Theorems \ref{thm4}, \ref{thm5} and \ref{thm6},   respectively.

\section{Proofs of Theorem \ref{thm} and Corollary \ref{cor}}\label{sec2}

Before the proof of Theorem \ref{thm}, we prepare some warm-up lemmas.

\begin{lemma}\label{lemma1}
Assume that the SDEs \eqref{E1} and \eqref{E2} are weakly well-posed. Then, for any $\vv>0,$ the path-valued processes ${\bf Y}^{N,\vv}$  and ${\bf Y}^{N,N,\vv}$  share  the common distributions as those of  ${\bf X}^N$ and  ${\bf X}^{N,N}$ on the path spaces $C([0,\8);\R^d)$ and $C([-r_0,\8);\R^d)$,  respectively.
\end{lemma}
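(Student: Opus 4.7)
The plan is to rewrite each block of the coupled system \eqref{B15} as an instance of the equation it is meant to couple, but driven by a new $d$-dimensional Brownian motion, and then to invoke the weak well-posedness of \eqref{E1} and \eqref{E2} to transfer distributional equality to the path level. Concretely, for each $i\in\mathbb S_N$ I would set
\begin{align*}
\widetilde W_t^i&:=\int_0^t h_\vv(|Z_s^{i,N,\vv}|)\,\d W_s^{1,i}+\int_0^t h_\vv^*(|Z_s^{i,N,\vv}|)\,\d W_s^{2,i},\\
\widehat W_t^i&:=\int_0^t \Pi(Z_s^{i,N,\vv})\, h_\vv(|Z_s^{i,N,\vv}|)\,\d W_s^{1,i}+\int_0^t h_\vv^*(|Z_s^{i,N,\vv}|)\,\d W_s^{2,i},
\end{align*}
so that the first block of \eqref{B15} reads $\d Y_t^{i,\vv}=b(Y_t^{i,\vv},\mathscr L_{Y_t^{i,\vv}})\,\d t+\si\,\d\widetilde W_t^i+\si_0(Y_t^{i,\vv})\,\d B_t^i$ and the second block reads $\d Y_t^{i,N,\vv}=\tt b(Y_{\theta_t}^{i,N,\vv},\tt\mu_{\bar\theta_t}^{N,\vv})\,\d t+\si\,\d\widehat W_t^i+\si_0(Y_t^{i,N,\vv})\,\d B_t^i$.

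Next, I would invoke L\'evy's characterization theorem to show that $(\widetilde W^i)_{i\in\mathbb S_N}$ and $(\widehat W^i)_{i\in\mathbb S_N}$ are each mutually independent standard $d$-dimensional Brownian motions, and that both families are independent of $(B^i)_{i\in\mathbb S_N}$. Indeed, both processes are continuous local martingales starting at $0$, and using the defining identity $(h_\vv^*)^2=1-h_\vv^2$ together with the mutual independence of the families $(W^{1,i})_i$ and $(W^{2,i})_i$, a direct computation of the matrix-valued quadratic covariations yields
\begin{align*}
\d\langle\widetilde W^i,\widetilde W^j\rangle_t=\delta_{ij}\,I_{d\times d}\,\d t,
\end{align*}
and the same identity for $\widehat W^i$, the latter upon additionally invoking the orthogonality $\Pi(z)\Pi(z)^\top=I_{d\times d}$. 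Independence of the new Brownian motions from $(B^i)_i$ is inherited directly from the original assumption that $(W^{1,i},W^{2,i})_i$ is independent of $(B^i)_i$.

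Finally, the rewritten first block is exactly $N$ instances of the McKean--Vlasov SDE \eqref{E1} driven by $(\widetilde W^i,B^i)$, and the joint law of $((\widetilde W^i)_i,(B^i)_i)$ coincides with that of $((W^i)_i,(B^i)_i)$; together with the identical initial data $(X_0^i)_i$, the weak well-posedness of \eqref{E1} then forces ${\bf Y}^{N,\vv}$ and ${\bf X}^N$ to share the same law on $C([0,\8);\R^d)^N$. The identical argument applied to the interacting block, using $(\widehat W^i,B^i)$ together with the matching initial segment $X_{[-r_0,0]}^{i,N}$ and the weak well-posedness of \eqref{E2}, yields that ${\bf Y}^{N,N,\vv}$ and ${\bf X}^{N,N}$ have the same law on $C([-r_0,\8);\R^d)^N$. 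The main subtlety lies in the covariation computation for $\widehat W^i$, where the factor $\Pi(Z^{i,N,\vv})$ could a priori destroy the identity covariance across coordinates; this is precisely what the orthogonality of $\Pi$ is engineered to neutralize, and everything else is a routine application of L\'evy's theorem together with weak uniqueness.
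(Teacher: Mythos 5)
Your proposal is correct and follows essentially the same route as the paper: absorb the reflection matrix and the $h_\vv,h_\vv^*$ factors into new driving Brownian motions, verify via L\'evy's characterization (using $h_\vv^2+(h_\vv^*)^2=1$ and the orthogonality of $\Pi$) that these, together with the $B^i$, form a jointly independent family with the right law, and then conclude by the weak uniqueness of \eqref{E1} and \eqref{E2}. The only caveat is that independence from $(B^i)_{i}$ is not literally ``inherited'' from the independence of the original drivers (the integrands $h_\vv(|Z_s^{i,N,\vv}|)$ depend on $B^i$ through the coupled solution); it must be, and in the paper is, deduced from the vanishing of the relevant cross-variations via the same L\'evy/martingale argument you already invoke for the mutual independence across $i$.
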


\begin{proof}
 For   $i\in\mathbb S_N $ and $t\ge0,$ let
\begin{align*}
\bar W_t^{1,i}=\int_0^t\Pi(Z_s^{i,N,\vv}) \d W_s^{1,i}.
\end{align*}
With this shorthand notation,  the SDE  solved by  $ (Y_t^{i, N,\vv})_{i\in\mathbb S_N} $ can be reformulated as
\begin{equation}\label{B16}
\d Y_t^{i, N,\vv}=\tilde b\big(Y_{\theta_t}^{i,N,\vv},\tilde\mu_{\bar\theta_t}^{N,\vv}\big)\,\d t+\si  h_\vv (|Z_t^{i,N,\vv}|)\d \bar W_t^{1,i}+\si h_\vv^*(|Z_t^{i,N,\vv}|)\d W_t^{2,i}+\si_0(Y_t^{i, N,\vv})\,\d B_t^i
\end{equation}
with the initial value $Y_{[-r_0,0]}^{i,N,\vv}=X_{[-r_0,0]}^{i,N}$.
Observe that the SDE \eqref{B16}   has the same weak solution as that of the SDE solved by $(Y_t^{1,N,\vv},\cdots, Y_t^{N,N,\vv})_{t\ge0}$. Therefore, to complete the proof of Lemma \ref{lemma1},
 it is sufficient to show that the distributions of $({\bf Y}_t^{N,\vv})_{t\ge0}$ and $({\bf X}_t^N)_{t\ge0}$ are identical.

 Set  for   $i\in\mathbb S_N $ and $t\ge0,$
\begin{equation*}
 \tilde W_t^i:=\int_0^th_\vv(|Z_s^{i,N,\vv}|)\d W_s^{1,i}+  \int_0^th_\vv^*(|Z_s^{i,N,\vv}|)\d W_s^{2,i}.
\end{equation*}
Since $W^{1,i}$ is independent of $W^{2,i}$, besides   $h_\vv(r)^2+h_\vv^*(r)^2=1, r\ge0,$  Lévy's characterization  shows that  $ \tilde W^i $ is still a Brownian motion. Then, the SDE  solved by $(Y_t^{i,\vv})_{t\ge0}$ can be rewritten as an SDE driven by $\tilde W^i $. More precisely, we have
\begin{equation}\label{B9}
\d Y_t^{i,\vv}=b(Y_t^{i,\vv},\hat\mu_t^{i,\vv})\,\d t+\si  \d \tilde W_t^{ i}+\si_0(Y_t^{i,\vv})\,\d B_t^i,\quad i\in\mathbb S_N,~t>0.
\end{equation}

In order to prove that, for any $\vv>0,$ the distribution of $({\bf Y}_t^{N,\vv})_{t\ge0}$ is equal to that of
$({\bf X}_t^N)_{t\ge0}$, it remains to  verify that,  for any  $i\neq j, $ $\tilde W^{ i}$ and $\tilde W^{ j}$ are mutually independent. Indeed, by applying It\^o's formula, it follows that for  $u,v\in\R^d$, $i,j\in\mathbb S_N$ with $i\neq j,$ and $t>0,$
\begin{equation}\label{B4}
\begin{split}
\d \big(\<u, \tilde W^{ i}_t\>\<v,\tilde W^{ j}_t\>\big)&=\<v,\tilde W^{ j}_t\>\d \<u, \tilde W^{ i}_t\>+\<u, \tilde W^{ i}_t\>\d \<v,\tilde W^{ j}_t\>+\d [\<u, \tilde W^{ i}_t\>, \<v,\tilde W^{ j}_t\>]\\
&=\<v,\tilde W^{ j}_t\>\d \<u, \tilde W^{ i}_t\>+\<u, \tilde W^{ i}_t\>\d \<v,\tilde W^{ j}_t\>,
\end{split}
\end{equation}
where the second identity holds true due to the quadratic variation  $[\<u, \tilde W^{ i}_t\>, \<v,\tilde W^{ j}_t\>]=0$, which is valid  since
$W^{1,1}, \cdots, W^{1,N} $ (resp. $W^{2,1}, \cdots, W^{2,N}$) are independent and $(W^{1,1},\cdots, W^{1,N})$ is also independent of $(W^{2,1},\cdots, W^{2,N})$.  Obviously, \eqref{B4} manifests that  $(\<u, \tilde W^{ i}_t\>\<v,\tilde W^{ j}_t\>)_{t\ge0}$ is  a martingale.
This results in that    the covariance matrix $\E(  \tilde W_t^{ i}\otimes \tilde W_t^{ j}), i\neq j,$ is a $d\times d$ zero matrix.
Hence, we conclude that, for any  $i\neq j, $ $ \tilde W^{ i} $ and $\tilde W^{ j}$ are mutually independent.  Next, by following an
analogous procedure above, we deduce that $(\tilde W^{ 1}, \cdots,\tilde W^{ N}) $ is independent of $(B^1,\cdots, B^N).$
Subsequently, thanks to the weak uniqueness of \eqref{E1}, we conclude that  $({\bf X}_t^N)_{t\ge0}$ and $({\bf Y}_t^{N,\vv})_{t\ge0}$ possess  the same  distribution.
\end{proof}

\begin{lemma}\label{lemma2}
Assume  $({\bf A}_1)$ and $({\bf A}_2)$ with $\lambda> 2  K +\frac{1}{2}(1+(p-2)^+)L$  for   some     $p\ge 2 $. Then,  for any  $i\in\mathbb S_N,$ there exists a  constant  $C_p^* >0$ $($independent of  $i$$)$ such that
\begin{equation}\label{B10}
\sup_{t\ge0}\E\big|Y_t^{i,\vv}\big|^p\le C_p^{*}\big(1+  \E\big|Y_0^{i,\vv}\big|^p\big)
\end{equation}
as long as  $\E|Y_0^{i,\vv}|^p<\8.$
\end{lemma}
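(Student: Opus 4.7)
The plan is a direct Lyapunov argument applied to the SDE actually solved by $Y^{i,\vv}$. By Lemma \ref{lemma1}, or equivalently by the L\'evy-reduction \eqref{B9}, the process $Y^{i,\vv}$ satisfies the McKean--Vlasov SDE
\[
dY_t^{i,\vv}=b(Y_t^{i,\vv},\hat\mu_t^{i,\vv})\,dt+\si\,d\tilde W_t^i+\si_0(Y_t^{i,\vv})\,dB_t^i,\qquad \hat\mu_t^{i,\vv}=\mathscr L_{Y_t^{i,\vv}},
\]
because the weights $h_\vv^2+(h_\vv^*)^2=1$ assemble the two reflected/independent Brownian pieces into a single standard Brownian motion. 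I would then apply It\^o's formula to $V(y)=(1+|y|^2)^{p/2}$, control the martingale part through a localization $\tau_n\uparrow\8$, and pass to expectations at the end, using Fatou to undo the localization once the bound is uniform.

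The pointwise bounds on $\mathcal L V$ proceed as follows. Splitting $b=b_1+b_0\ast\mu$ and testing $({\bf A}_1)$ at the reference point $0$, while using local boundedness of $b_1$ together with $\phi(0)=0$ and continuity of $\phi$ to absorb the bounded contribution on $\{|y|\le\ell_0\}$, delivers a global dissipativity estimate $\langle y,b_1(y)\rangle\le -\lambda|y|^2+C$. Applying the Lipschitz bound \eqref{B2} to $b_0(y-z)-b_0(-z)$ yields $\langle y,(b_0\ast\mu)(y)\rangle\le K|y|^2+K|y|\int|z|\mu(dz)+C(1+|y|)$. The additive-noise contribution is a constant depending only on $|\si|$ and $d$, independent of $\vv$. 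Finally, the multiplicative-noise contribution to $\mathcal L V$ is controlled, via \eqref{B2-1} and $y^\top \si_0\si_0^\top y\le\|\si_0(y)\|_{\rm HS}^2|y|^2$, by $\tfrac{p(1+(p-2)^+)}{2}L|y|^p$ plus a constant.

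Assembling these pieces and taking expectations, the mean-field term is absorbed by Jensen's inequality: since $\hat\mu_t^{i,\vv}=\mathscr L_{Y_t^{i,\vv}}$,
\[
\E\Big(|Y_t^{i,\vv}|^{p-1}\int|z|\hat\mu_t^{i,\vv}(dz)\Big)=\E|Y_t^{i,\vv}|^{p-1}\cdot\E|Y_t^{i,\vv}|\le \E|Y_t^{i,\vv}|^p.
\]
This upgrades the naive coefficient $K$ from the interaction to $2K$, and one obtains the differential inequality
\[
\tfrac{d}{dt}\E V(Y_t^{i,\vv})\le -p\bigl(\lambda-2K-\tfrac12(1+(p-2)^+)L\bigr)\E V(Y_t^{i,\vv})+C^\sharp.
\]
Under the standing hypothesis $\lambda>2K+\tfrac12(1+(p-2)^+)L$, the prefactor is strictly negative, and a one-line Gronwall argument delivers the uniform-in-time bound \eqref{B10} provided $\E|Y_0^{i,\vv}|^p<\8$.

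The main obstacle is bookkeeping the constants so that the dissipative budget $\lambda$ precisely covers the $2K$ produced by the Jensen absorption of the nonlocal term together with the $\tfrac{p-1}{2}L$ produced by the Hessian of $V$ in the It\^o expansion when $p>2$; the boundary case $p=2$ has to be handled separately since then the $(p-2)^+$ term contributes nothing. Everything else is routine, including the $i$-independence of $C_p^*$, which is transparent from the fact that the constants entering the Lyapunov computation depend only on $b_1$, $b_0$, $\si$, $\si_0$, and $p$, not on the index $i$.
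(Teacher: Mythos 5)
Your proposal is correct and follows essentially the same route as the paper's proof: apply It\^o's formula with the Lyapunov function $(1+|y|^2)^{p/2}$ to the reduced SDE \eqref{B9}, use $({\bf A}_1)$, \eqref{B2} and \eqref{B2-1} to bound the generator, absorb the mean-field term (your H\"older step is equivalent to the paper's Young--Jensen combination), and close with Gronwall under $\lambda>2K+\tfrac12(1+(p-2)^+)L$. The only differences are cosmetic bookkeeping of small $\varepsilon$-losses from the linear lower-order terms, which you already flag and which the strict inequality accommodates.
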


\begin{proof}
 For any $p\ge1$, let
\begin{equation*}
V_p(x)=(1+|x|^2)^{\frac{p}{2}},\quad x\in\R^d.
\end{equation*}
Performing a direct calculation shows that  for any $x\in\R^d,$
\begin{equation*}
\nn V_p(x)=p(1+|x|^2)^{\frac{p}{2}-1}x ~\mbox{ and }~ \nn^2 V_p(x)=p(1+|x|^2)^{\frac{p}{2}-1}I_{d\times d}+p(p-2)(1+|x|^2)^{\frac{p}{2}-2}(x\otimes x).
\end{equation*}
Next, by virtue of \eqref{B1} and \eqref{B2}, it follows that for all $x\in\R^d$ and $\mu\in\mathscr P_1(\R^d)$,
\begin{equation}\label{R*}
\<x,b(x,\mu)\>\le \ell_0\phi(\ell_0)+\lambda\ell_0^2- (\lambda-K)|x|^2
+|x|\big(|b_0({\bf0})|+|b_1({\bf0})|+K\mu(|\cdot|)\big).
\end{equation}
Then, applying  It\^o's formula to the SDE \eqref{B9}, we derive from \eqref{B2-1} that
\begin{align*}
&\d V_p(Y_t^{i,\vv})\\&=\Big(p\big(1+|Y_t^{i,\vv}|^2\big)^{\frac{p}{2}-1}\big\<Y_t^{i,\vv},b(Y_t^{i,\vv},\hat\mu_t^{i,\vv})\big\>\\
&\qquad+\frac{1}{2}p\si^2\big(1+|Y_t^{i,\vv}|^2\big)^{\frac{p}{2}-1}\big(d+(p-2)(1+|Y_t^{i,\vv}|^2)^{ -1}|Y_t^{i,\vv}|^2\big)\Big)\,\d t\\
&\qquad   +\ff 12 p\big (1+|Y_t^{i,\vv}|^2\big)^{\frac{p}{2}-1}\big( \|\si_0(Y_t^{i,\vv})\|^2_{\rm HS} +(p-2)(1+|Y_t^{i,\vv}|^2)^{ -1}|\si_0^*(Y_t^{i,\vv})Y_t^{i,\vv}|^2 \big)\,\d t +\d \bar M_t^{p,i}\\
&\le \Big(p\big(1+|Y_t^{i,\vv}|^2\big)^{\frac{p}{2}-1}\big(\ell_0\phi(\ell_0)+\lambda\ell_0^2- (\lambda-K)|Y_t^{i,\vv}|^2
+|Y_t^{i,\vv}|\big(|b_0({\bf0})|+|b_1({\bf0})|+K\E|Y_t^{i,\vv}|\big)\big) \\
&  \qquad+\frac{1}{2}p\big(1+|Y_t^{i,\vv}|^2\big)^{\frac{p}{2}-1}\big(\si^2 (d+(p-2)^+ )\\
&\qquad\qquad\qquad\quad\qquad\quad\quad\quad+\|\si_0(Y_t^{i,\vv})\|^2_{\rm HS}+(p-2)^+ \|\si_0(Y_t^{i,\vv})\|^2_{\rm op}\big)   \Big)\,\d t
+\d \bar M_t^{p,i},
\end{align*}
where  $(\bar M_t^{p,i})_{t\ge0}$ is a  martingale and $\|\cdot\|_{{\rm op}}$ means the operator norm.  For any $\kappa>0$, it can readily be seen from \eqref{B2-1} that there exists a constant $C_\kk>0$ such that
\begin{equation*}
\|\si_0(x)\|_{\rm HS}^2\le (L+\kappa)|x|^2+C_\kappa,\qquad x\in\R^d.
\end{equation*}
 Set $\lambda^\star:=\frac{1}{2}(\lambda- 2K-\frac{1}{2}(1+(p-2)^+)L)$, which is positive  due to $\lambda> 2K+\frac{1}{2}(1+(p-2)^+)L. $ Again, by applying It\^o's formula,
 there exists a positive constant $C_\star=C(\lambda^\star)$ such that
\begin{align*}
\d\big(\e^{p\lambda^\star t} V_p(Y_t^{i,\vv}) \big)
&\le \e^{p\lambda^\star t}\big(C_\star   -pK V_p(Y_t^{i,\vv}) +pK\big(1+|Y_t^{i,\vv}|^2\big)^{\frac{1}{2}(p-1)}  \E|Y_t^{i,\vv}| \big)\,\d t+ \e^{p\lambda^\star t} \d \bar M_t^{p,i}\\
 &\le \e^{p\lambda^\star t}\big(C_\star-  K V_p(Y_t^{i,\vv})
+ K   \E V_p(Y_t^{i,\vv})  \big)  \,\d t+\e^{p\lambda^\star t}  \d \bar M_t^{p,i},
\end{align*}
where we  utilized Young's inequality
in the first inequality and the second inequality, as well as Jensen's inequality in the second inequality. As a consequence, \eqref{B10}  is available
immediately.
\end{proof}

With preliminary Lemmas \ref{lemma1} and \ref{lemma2} at hand, we are in position to complete the
\begin{proof}[Proof of Theorem \ref{thm}]
Recall that, for all $t\ge0,$ $(Y_t^{i,\vv},Y_t^{i, N,\vv})_{i\in\mathbb S_N}$ solves    \eqref{B15} with   the initial value   $\big(Y_0^{i,\vv}, Y_{[-r_0,0]}^{i,N,\vv}\big)_{i\in\mathbb S_N}=\big(X_0^i,X_{[-r_0,0]}^{i,N}\big)_{i\in\mathbb S_N}$, which are i.i.d. random variables. For any $\mu\in\mathscr P_1(\R^d)$  and $\nu\in\mathscr P_1(\C)$,  in the following analysis, we choose  $(X_0^i,X_{[-r_0,0]}^{i,N})_{i\in\mathbb S_N} $ such that
\begin{equation}\label{B17}
 \mathbb W_1(\mu,\nu_0)=\E|X_0^i-X_0^{i,N}|, \quad i\in\mathbb S_N,
\end{equation}
in which $\nu_0(\d x):=\nu(\{\eta\in\mathscr C\}: \eta_0\in\d x)$.

Note that
\begin{align*}
\<I_{d\times d}-{\bf e}(x)\otimes {\bf e}(x), {\bf e}(x)\otimes {\bf e}(x)\big\>_{\rm HS}=0 \quad  ~\mbox{ and } ~  \quad  ({\bf e}(x)\otimes {\bf e}(x)){\bf e}(x)={\bf e}(x),\quad x\neq{\bf0}.
\end{align*}
Next, we shall fix the index $i \in \mathbb{S}_N$. For any $\eta \in(0,1]$, define the function $V_\eta$ by
$$
V_\eta(x)=(\eta+|x|^2)^{1 / 2}, \quad x \in \mathbb{R}^d,
$$
which is indeed a smooth approximation of the function $\mathbb{R}^d \ni x \mapsto|x|$. Applying Itô's formula and utilizing the facts:
$$
\nabla V_\eta(x)=\frac{x}{V_\eta(x)} \quad \text { and } \quad \nabla^2 V_\eta(x)=\frac{1}{V_\eta(x)} I_{d\times d}-\frac{x \otimes x}{V_\eta(x)^3},
$$
 it follows from the \eqref{B15} that 
 \begin{equation}\label{B-6}
 	\begin{aligned}
 		\d V_\eta(Z^{i,N,(\vv)}_t)\le & \ff1{V_\eta(Z^{i,N,(\vv)}_t)}\Psi_t^{i,N,\vv}\, \d t
 		+\ff1{V_\eta(Z^{i,N,(\vv)}_t)}\Big(\big\<Z_t^{i,N,\vv}, \big(\si_0(Y_t^{i,\vv})-\si_0(Y_{t}^{i,N,\vv})\big)\d B_t^i\big \>\\
 		&+2\si h_\vv(|Z_t^{i,N,\vv}|)\big\<Z_t^{i,N,\vv},\big({\bf e}(Z_t^{i,N,\vv})\otimes {\bf e}(Z_t^{i,N,\vv})\big)\d W_t^{1,i}\big\>\Big)\\
 		&+4\si^2\ff{h_\vv(|Z_t^{i,N,\vv}|)^2}{V_\eta(Z^{i,N,(\vv)}_t)}\big\<I_{d\times d}-\ff{Z_t^{i,N,\vv}\otimes Z_t^{i,N,\vv}}{|V_\eta(Z^{i,N,(\vv)}_t)|^2}
 		,{\bf e}(Z_t^{i,N,\vv})\otimes {\bf e}(Z_t^{i,N,\vv})\big\>_{\rm HS}\,\d t,
 	\end{aligned}
 \end{equation}
 where $\<\cdot,\cdot\>_{\rm HS}$ stands for the Hilbert-Schmidt inner product, and
\begin{align*}
\Psi_t^{i,N,\vv}:=\big\<Z_t^{i,N,\vv},b(Y_t^{i,\vv},\hat \mu_t^{i,\vv})-\tilde b(Y_{\theta_t}^{i,N,\vv},\tilde\mu_{\bar\theta_t}^{N,\vv})\big\>  +\ff 12 \|\si_0(Y_t^{i,\vv})-\si_0(Y_{t}^{i,N,\vv})\|^2_{\rm HS}.
\end{align*}
Note that for any $x\in \R^d$, when $\eta $ tends to 0,
\begin{align*}
\ff{x}{V_\eta (x)}\to  \ff x{|x|}\mathds 1_{\{x\neq {\bf 0\}}}	;\qquad\ff{h_\vv(x)^2}{V_\eta(x)}\big\<I_{d\times d}-\ff{x\otimes x}{|V_\eta(x)|^2}
 		,{\bf e}(x)\otimes {\bf e}(x)\big\>_{\rm HS}\le\ff{\eta}{{(\eta+\vv^2)}^{\ff32} }\to 0.
\end{align*}
Therefore, approaching $\eta\to0$ in \eqref{B-6}  leads to the estimates below:
\begin{equation}\label{B6}
\begin{split}
\d |Z_t^{i,N,\vv}|&
\le  
\frac{1}{|Z_t^{i,N,\vv}|}\mathds{1}_{\{Z_t^{i,N,\vv}\neq {\bf0}\}} \Psi_t^{i,N,\vv}\,\d t +2\si h_\vv(|Z_t^{i,N,\vv}|)\mathds{1}_{\{Z_t^{i,N,\vv}\neq {\bf0}\}}\big\<{\bf e}(Z_t^{i,N,\vv}),\d W_t^{1,i}\big\>\\
 &\quad +\frac{1}{|Z_t^{i,N,\vv}|}\mathds{1}_{\{Z_t^{i,N,\vv}\neq {\bf0}\}}\big\<Z_t^{i,N,\vv}, \big(\si_0(Y_t^{i,\vv})-\si_0(Y_{t}^{i,N,\vv})\big)\d B_t^i\big \>.
\end{split}
\end{equation}

By splitting the quantity  $b(Y_t^{i,\vv},\hat\mu_{ t}^{i,\vv})-\tilde b(Y_{\theta_t}^{i,N,\vv},\tilde\mu_{\bar\theta_t}^{N,\vv})$ into three terms followed by taking \eqref{B1}, \eqref{B2}  as well as \eqref{B2-1} into consideration, we obtain that
\begin{equation}\label{B5}
\begin{split}
\Psi_t^{i,N,\vv}
&\le\<Z_t^{i,N,\vv},  (b_1(Y_t^{i,\vv})-b_1(Y_t^{i, N,\vv}) +\tt \psi^{i,N,\vv}_t
\>\\
&\quad +\<Z_t^{i,N,\vv},
 (b_0\ast\bar\mu_t^{N,\vv})(Y_t^{i, \vv})- (b_0\ast\tilde \mu_t^{N,\vv})(Y_t^{i, N,\vv})
  \>+\<Z_t^{i,N,\vv},  \psi^{i,N,\vv}_t\>+\ff 12 L|Z_t^{i,N,\vv}|^2\\
&\le \big(\phi(|Z_t^{i,N,\vv}|)+\lambda|Z_t^{i,N,\vv}|\big)|Z_t^{i,N,\vv}|\mathds{1}_{\{|Z_t^{i,N,\vv}|\le\ell_0\}}-(\lambda-K-L/2)|Z_t^{i,N,\vv}|^2  \\
&\quad+|Z_t^{i,N,\vv}|\big(K\mathbb W_1(\bar\mu_t^{N,\vv},\tilde \mu_t^{N,\vv})+  |\tt \psi^{i,N,\vv}_t|  + |\psi^{i,N,\vv}_t|\big),
\end{split}
\end{equation}
where $\bar \mu_t^{N,\vv}:=\frac{1}{N}\sum_{j=1}^N\delta_{Y_t^{j,\vv}}$,
\begin{align*}
 \tt \psi^{i,N,\vv}_t:=(b_0\ast\hat\mu_t^{i,\vv})(Y_t^{i, \vv})- (b_0\ast\bar \mu_t^{N,\vv})(Y_t^{i, \vv}) ~ \mbox{ and } ~ \psi^{i,N,\vv}_t:= b(Y_t^{i, N,\vv},\tilde \mu_t^{N,\vv})-\tilde b(Y_{\theta_t}^{i,N,\vv},\tilde\mu_{\bar\theta_t}^{N,\vv}).
\end{align*}
Inserting \eqref{B5} back into \eqref{B6} yields the estimate below:
\begin{equation}\label{E0}
\begin{split}
\d |Z_t^{i,N,\vv}|
&\le \mathds{1}_{\{Z_t^{i,N,\vv}\neq {\bf0}\}}  \big( (\phi(|Z_t^{i,N,\vv} |)+\lambda|Z_t^{i,N,\vv} |)\mathds{1}_{\{|Z_t^{i,N,\vv} |\le\ell_0\}}- (\lambda-K-L/2)|Z_t^{i,N,\vv} |\\
&\qquad\qquad\qquad+ K\mathbb W_1(\bar\mu_t^{N,\vv},\tilde \mu_t^{N,\vv}) + | \tt \psi^{i,N,\vv}_t|  +  |\psi^{i,N,\vv}_t|\big) \,\d t\\
&\quad+2\si h_\vv(|Z_t^{i,N,\vv}|)\mathds{1}_{\{Z_t^{i,N,\vv}\neq {\bf0}\}}\big\<{\bf e}(Z_t^{i,N,\vv}),\d W_t^{1,i}\big\>\\
&\quad+\frac{1}{|Z_t^{i,N,\vv}|}\mathds{1}_{\{Z_t^{i,N,\vv}\neq {\bf0}\}}\big\<Z_t^{i,N,\vv}, \big(\si_0(Y_t^{i,\vv})-\si_0(Y_{t}^{i,N,\vv})\big)\d B_t^i\big \> .
\end{split}
\end{equation}

Next, we define the $C^2$-function
\begin{align*}
f(r)=1-\e^{-c_1r}+c_2r,\quad r\ge0,
\end{align*}
where
\begin{align*}
c_1:=\frac{  2(\phi(\ell_0)+  (L +2K) \ell /2) }{ \si^2}   , \quad c_2:=c_1\e^{-c_1\ell_0}.
\end{align*}
Applying It\^o's formula, we derive from \eqref{E0} and $f''<0$ that
\begin{equation*}
\begin{split}
\d f(|Z_t^{i,N,\vv}|)&\le\big(\big(\varphi(|Z_t^{i,N,\vv}|)+2\si^2f''(|Z_t^{i,N,\vv}|)\big)h_\vv(|Z_t^{i,N,\vv}|)^2\mathds{1}_{\{Z_t^{i,N,\vv}\neq {\bf0}\}}+\varphi(|Z_t^{i,N,\vv}|)\big(1-h_\vv(|Z_t^{i,N,\vv}|)^2\big)\\
  &\qquad+f'( |Z_t^{i,N,\vv}|) \big(  K\mathbb W_1(\bar\mu_t^{N,\vv},\tilde \mu_t^{N,\vv}) + | \tt \psi^{i,N,\vv}_t|  +   |\psi^{i,N,\vv}_t| \big)\,\d t+\d M_t^{i,\vv}
\end{split}
\end{equation*}
for some martingale $(M_t^{i,\vv})_{t\ge0}$, where the function $\varphi :[0,\8)\to\R$ is defined by
\begin{align*}
\varphi (r) =  f'(r)\big((\phi(r)+\lambda r)\mathds{1}_{\{r\le\ell_0\}}- (\lambda -K-L/2)r  \big).
\end{align*}

Below, for the case $0\le r\le \ell_0$ and the case $r>\ell_0$,  we aim to verify respectively that
\begin{align}\label{E8}
\varphi^*(r):=\varphi(r) +2\si^2f''(r)\le -\lambda^*f(r),\quad r\ge0,
\end{align}
where for     $\lambda>K+L/2,$
\begin{align*}
\lambda^*:=\frac{((  2\phi(\ell_0)+  L\ell_0   )\wedge(  \lambda-K - L/2 ))c_2}{1-\e^{-c_1\ell_0}+c_2\ell_0}.
\end{align*}
By virtue of
\begin{align*}
f'(r)=c_1\e^{-c_1r}+c_2,\quad f''(r)=-c_1^2\e^{-c_1r},\quad r\ge0,
\end{align*}
it is easy to ready that for any $r\ge0,$
\begin{align*}
  \varphi^*(r) =(c_1\e^{-c_1r}+c_2)\big((\phi(r)+\lambda r)\mathds{1}_{\{r\le\ell_0\}}-(\lambda -K-L/2)r \big)-2\si^2c_1^2\e^{-c_1r}.
\end{align*}
For the case $0\le r\le \ell_0$,  in view of  $c_2=c_1\e^{-c_1\ell_0}\le c_1\e^{-c_1r}$ and the increasing property of $\phi$,
we find that
\begin{equation}\label{E9}
\begin{split}
\varphi^*(r)
 &\le-\big(2\si^2c_1^2-2c_1  (\phi(\ell_0) +  ( L+2K) \ell_0/2)\big)\e^{-c_1r} \\
 &\le - \si^2c_1^2 \e^{-c_1\ell_0} \\
&\le- \frac{c_1c_2\si^2}{1-\e^{-c_1\ell_0}+c_2\ell_0}f(r),
\end{split}
\end{equation}
where the second inequality is verifiable owing to
$
\si^2c_1^2=2c_1(\phi(\ell_0)+   L \ell_0/2),
$
and the last display is valid due to the fact that   $  r\mapsto f(r)$ is increasing on $[0,\8)$.
 On the other hand,  once $\lambda>K+L/2 $,  we derive that for $r>\ell_0,$
  \begin{equation}\label{B8}
\varphi^*(r)\le -c_2(\lambda-K - L/2)r     \le-\frac{c_2(\lambda-K  -L/2) }{1-\e^{-c_1\ell_0}+c_2\ell_0}  f(r),
\end{equation}
where the second inequality is provable since, for $\alpha>0,$   $  r\mapsto \frac{r}{1-\e^{-\alpha r}+r}$  is increasing on the interval $[0,\8)$ by taking the fundamental inequality: $1-\e^{-r}\ge r\e^{-r},r\ge0,$ into consideration. Consequently,
\eqref{E8} follows by combining \eqref{E9} with  \eqref{B8}.

In the sequel, invoking \eqref{E8} and taking  $c_2\le f'(r)\le c_1+c_2, r\ge0,$ and $f(0)=0$ into account
leads to
\begin{equation}\label{B11}
\begin{split}
\d f(|Z_t^{i,N,\vv}|)&\le\big(-\lambda^*  f(|Z_t^{i,N,\vv}|)h_\vv(|Z_t^{i,N,\vv}|)^2+ \varphi(|Z_t^{i,N,\vv}|)\big(1-h_\vv(|Z_t^{i,N,\vv}|)^2\big)       \\
&\quad\quad+  (c_1+c_2)     \big(   K\mathbb W_1(\bar\mu_t^{N,\vv},\tilde \mu_t^{N,\vv})+ |\tt \psi^{i,N,\vv}_t|  +|\psi^{i,N,\vv}_t|\big)  \big)\,\d t+\d M_t^{i,\vv}\\
&\le \d M_t^{i,\vv}+\Bigg(-\lambda^*  f(|Z_t^{i,N,\vv}|)+\big(\lambda^*f(|Z_t^{i,N,\vv}|)+\varphi(|Z_t^{i,N,\vv}|)\big)\big(1-h_\vv(|Z_t^{i,N,\vv}|)^2\big)  \\
&\quad\quad+  (c_1+c_2) \bigg(|\tt \psi^{i,N,\vv}_t|  +|\psi^{i,N,\vv}_t| +\frac{K}{c_2N}\sum_{j=1}^Nf(|Z_t^{j,N,\vv}|)\bigg)  \Bigg)\,\d t.
\end{split}
\end{equation}

Because  $(\tilde W^{ i})_{i\in\mathbb S_N}$  are independent, as demonstrated in the proof of Lemma \ref{lemma1}, and \eqref{B9} is strongly well-posed, $(Y^{i,\vv})_{i\in\mathbb S_N}$ are also independent. As a consequence, by following the line to drive \cite[(22), p.5396]{DEGZ},
we deduce from Lemma \ref{lemma2} with $p=2$ (in case of $\lambda>  2K +  L/2$)   that for some   constant $C_1>0$,
\begin{align}\label{EEE}
 \E|\tt \psi^{i,N,\vv}_t| \le C_1N^{-\frac{1}{2}}\mathds{1}_{\{K>0\}} ,\qquad t\ge0.
\end{align}
Substituting this estimate  into \eqref{B11}   yields for some constant $C_2>0,$
\begin{equation*}
\begin{split}
\frac{1}{N}\sum_{j=1}^N\d \E f(|Z_t^{j,N,\vv}|)\le\bigg(- \frac{\lambda^{**}}{N}\sum_{j=1}^N\E f(|Z_t^{j,N,\vv}|)  +    C_2\Lambda_t^{N,\vv}\bigg)\,\d t,
\end{split}
\end{equation*}
where
 $\lambda^{**}:=\lambda^*- K (1+c_1/c_2), $
and
\begin{align*}
\Lambda_t^{N,\vv}:=   N^{-\frac{1}{2}}+ \frac{1}{N}\sum_{j=1}^N \big(\lambda^*f(|Z_t^{j,N,\vv}|)+\varphi (|Z_t^{j,N,\vv}|)\big)\big(1-h_\vv(|Z_t^{j,N,\vv}|)^2\big) +     \frac{1}{N}\sum_{j=1}^N|\psi^{i,N,\vv}_t|.
\end{align*}

Obviously, there is a positive constant $K^* $  such that $\lambda^{**}>0$
for any $K\in[0,K^*] $. In what follows, we shall take $K\in[0,K^*] $ so that $\lambda^{**}>0$.
Whereafter,   an application of  Gronwall's inequality yields that
\begin{align*}
\frac{1}{N}\sum_{j=1}^N\E f(|Z_t^{j,N,\vv}|)&\le \e^{-\lambda^{**} t} \E f(|Z_0^{1,N,\vv}|)+C_2\int_0^t\e^{- \lambda^{**}(t-s)}\E\Lambda_s^{N,\vv} \,\d s,
\end{align*}
where we also explored the prerequisite that $(X_0^{i,\vv},X_{[-r_0,0]}^{i, N,\vv})_{i\in\mathbb S_N}=(Y_0^i,Y_{[-r_0,0]}^{i,N})_{i\in\mathbb S_N}$   are distributed identically. Once more, with the help of $c_2 r\le f(r)\le (c_1+c_2)r, r\ge0,$  along with \eqref{B17},
there is a constant $C_3>0$ such that
\begin{equation}\label{B13}
\frac{1}{N}\sum_{j=1}^N\E  |Z_t^{j,N,\vv}| \le C_3\bigg(\e^{-\lambda^{**} t}\mathbb W_1(\mu,\nu_0) + \int_0^t\e^{-\lambda^{**} (t-s)}\E\Lambda_s^{N,\vv} \,\d s\bigg).
\end{equation}
Next, according to the definition of $h_\vv$, in addition to  $f(0)=0$ and  $c_2\le f'(r)\le c_1+c_2$  for $r\ge0,$ it follows readily  that
\begin{align*}
\big(\lambda^*f(r)+\varphi(r)\big)\big(1-h_\vv(r)^2\big)
&\le 2(c_1+c_2)\big((\lambda^*+  K +L/2)r+\phi(r)\big)(1-h_\vv(r)\big)\\
&\le 2(c_1+c_2)\big(2(\lambda^*+  K +L/2)\vv+\phi(2\vv)\big):=\rho(\vv).
\end{align*}
Whence, we infer that for some constant $C_4>0,$
\begin{equation}\label{BH8}
\begin{split}
\frac{1}{N}\sum_{j=1}^N\E  |Z_t^{j,N,\vv}| \le C_4\Big(&\e^{-\lambda^{**} t}\mathbb W_1(\mu,\nu_0) +\rho(\vv)+N^{-\frac{1}{2}}\mathds{1}_{\{K>0\}}\\ &+\frac{1}{N} \sum_{j=1}^N \int_0^t\e^{-\lambda^{**} (t-s)}\E|\psi^{j,N,\vv}_s| \,\d s\Big).
\end{split}
\end{equation}

By the aid of Lemma \ref{lemma1},  $\mu_t^i=\mathscr L_{Y_t^{j,\vv}}$ and $\nu_t^{i,N}=\mathscr L_{Y_t^{j,N,\vv}}$ for each fixed $i\in\mathbb S_N $ and any $j\in\mathbb S_N.$  Moreover, recall that $(Y_0^{i,\vv}, Y_{[-r_0,0]}^{i,N,\vv})_{i\in\mathbb S_N}=(X_0^i,X_{[-r_0,0]}^{i,N})_{i\in\mathbb S_N}$  are independent and identically distributed.
Therefore, we derive that
\begin{equation*}
\mathbb W_1(\mu_t^i,\nu_t^{i,N})\le \frac{1}{N}\sum_{j=1}^N\E|Z_t^{j,N,\vv}|,\quad \forall\, i\in\mathbb S_N.
\end{equation*}
Thus, \eqref{BH8}  enables  us to derive that
\begin{equation*}
\begin{split}
\mathbb W_1(\mu_t^i,\nu_t^{i,N})&\le  C_4\Big(\e^{-\lambda^{**} t}\mathbb W_1(\mu,\nu_0) +\rho(\vv)+N^{-\frac{1}{2}}\mathds{1}_{\{K>0\}} +\frac{1}{N} \sum_{j=1}^N \int_0^t\e^{-\lambda^{**} (t-s)}\E|\psi^{j,N,\vv}_s| \,\d s\Big).
\end{split}
\end{equation*}
Subsequently, making use of Lemma \ref{lemma1} followed by  approaching $\vv\downarrow0$, and applying the prerequisite
  that $(X_{[-r_0,0]}^{i,N})_{i\in\mathbb S_N}$ are independent and identically distributed
yields that 
\begin{equation*}
\begin{split}
\mathbb W_1(\mu_t^i,\nu_t^{i,N})
&\le C_4\Big(\e^{-\lambda^{**} t}\mathbb W_1(\mu,\nu_0) +N^{-\frac{1}{2}}\mathds{1}_{\{K>0\}}  + \int_0^t\e^{-\lambda^{**} (t-s)}\E\big| \psi^{i,N}_s\big| \,\d s\Big),
\end{split}
\end{equation*}
 in which $$ \psi^{i,N}_s=b(X_s^{i, N},\tilde \mu_s^{N})-\tilde b(X_{\theta_s}^{i,N},\tilde\mu_{\bar\theta_s}^{N}).$$
Finally, the whole proof is complete    by choosing $K^*,L^*>0$ such that $\lambda^*,\lambda^{**}>0$ and $\lambda>2K+ L/2$ for all $K\in[0,K^*]$ and $L\in[0,L^*]$.
\end{proof}

\begin{proof}[Proof of Corollary \ref{cor}]
In terms of Theorem \ref{thm}, \eqref{BH11} follows immediately  by taking $r_0=0$, $\tilde b=b$, and $\theta_t=\bar\theta_t=t$. To show  \eqref{BH11} for the McKean-Vlasov SDE \eqref{BH12}
provided that Assumption ({\bf H}) is imposed, it is sufficient to prove the strong well-posedness and
check  respectively Assumptions $({\bf A}_1)$ and $({\bf A}_2)$.

Under Assumption ({\bf H}), \eqref{BH12} is strongly well-posed  once $X_0\in L^1(\OO\to\R^d;\mathscr F_0,\P)$; see, for instance, \cite[Theorem 4.1]{RW}. Trivially, Assumption (${\bf A}_2$) holds true.
Next, by virtue of \eqref{B1} with $\phi(r)=\lambda_0r$ and
 $b_1$ being replaced by $\bar b_1$, it follows readily that for $x,y\in\R^d$ and $\mu\in\mathscr P_1(\R^d)$,
\begin{align*}
\<x-y,b_1(x)-b_1(y)\>&=\<x-y, \bar b_1(x)-\bar b_1(y)\>+\<x-y,\tilde b_1(x)-\tilde b_1(y)\>\\
&\le |x-y|\varphi(|x-y|)+\big((\lambda_0+\lambda)\mathds{1}_{\{|x-y|\le\ell_0\}}-\lambda \big)|x-y|^2.
\end{align*}
Owing to  $\lim_{r\to\8}\frac{\varphi(r)}{r}=0$, there is an $r_0>\ell_0$ such that $\varphi(r)\le \frac{1}{2}\lambda r, r\ge r_0.$ Thus, we derive that
\begin{align*}
\<x-y,b_1(x)-b_1(y)\>
&\le |x-y|\big(\varphi(|x-y|)+ \lambda_0 |x-y|\big)\mathds{1}_{\{|x-y|\le r_0\}} -\frac{1}{2}\lambda|x-y|^2\mathds{1}_{\{|x-y|> r_0\}}.
\end{align*}
Therefore, \eqref{B1} is verifiable for $\phi(r)=\varphi(r)+\lambda_0r$, which obviously is increasing and satisfies $\phi(0)=0$.   This, together with Lipschitz property of $b_0,$ ensures Assumption (${\bf A}_1$).
\end{proof}

\section{Proof of Theorem \ref{thm3}}\label{sec3}
First of all, we demonstrate that the moment of the displacement for $(Y_t)_{t\ge0}$, determined by \eqref{E4E},
can be bounded by the length of time lag.
\begin{lemma}\label{lemma0}
Assume \eqref{B2-1} and $\beta>0$. Then, there exist constants $C^*,L^*>0$ such that for all $L\in[0,L^*]$ and $t\ge0,$
\begin{align}\label{BH3}
 \E\big| Y_t-Y_{t-r_0}\big|\le C^*  (1+r_0  )\big(1+  \E\|\xi\|_\8\big) r_0^{\frac{1}{2}}.
\end{align}
\end{lemma}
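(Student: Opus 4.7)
The plan is to first establish a uniform-in-time second-moment bound for the delay process $(Y_t)_{t \geq -r_0}$, and then to represent the increment $Y_t - Y_{t-r_0}$ as an $\d s$/$\d W_s$/$\d B_s$-integral which can be bounded termwise in $L^1(\Omega)$.

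For the moment bound, I would follow the variation-of-constants route announced in Section \ref{sec3}: write \eqref{E4E} against the fundamental solution $R(\cdot)$ of the deterministic delay ODE $R'(t) = -\beta R(t-r_0)$, $R(0) = I$, and read off $\sup_{t \geq -r_0} \E|Y_t|^2 \leq M$ with $M$ depending polynomially on $|\alpha|$, $\sigma$, $\E\|\xi\|_\infty$ and $r_0$ (the $r_0$-dependence being harmless because of the prefactor $(1+r_0)$ in the target bound). Since $\beta > 0$, $R$ is exponentially stable in an averaged sense and the resulting integral representation combined with \eqref{B2-1} and Gronwall closes provided $L \leq L^*$ with $L^*$ small enough that the multiplicative noise $\sigma_0(Y_t)\,\d B_t$ cannot offset the linear dissipation. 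Alternatively, a direct It\^o--Lyapunov argument on $|Y_t|^2$ would split $\langle Y_t, Y_{t-r_0}\rangle = |Y_t|^2 + \langle Y_t, Y_{t-r_0} - Y_t\rangle$ and close a Gronwall inequality on $\sup_{s\leq t} \E|Y_s|^2$, but the variation-of-constants route is cleaner and uses the data of the problem more directly.

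Once the moment bound is in hand, fix $t \geq 0$. If $t \geq r_0$, the SDE \eqref{E4E} gives
\begin{equation*}
Y_t - Y_{t-r_0} = \int_{t-r_0}^t \beta(\alpha - Y_{s-r_0})\,\d s + \sigma(W_t - W_{t-r_0}) + \int_{t-r_0}^t \sigma_0(Y_s)\,\d B_s,
\end{equation*}
whereas for $t < r_0$ the identity $Y_{t-r_0} = \xi_{t-r_0}$ produces the analogous expression carrying the extra initial-segment jump $\xi_0 - \xi_{t-r_0}$ and with $\xi_{s-r_0}$ in place of $Y_{s-r_0}$ in the drift integral. Taking $L^1(\Omega)$-norms and applying the step-one moment bound, the deterministic drift piece is of size $O(r_0(|\alpha| + \E\|\xi\|_\infty + \sup_s \E|Y_s|))$, the Brownian increment is of size $O(\sigma\sqrt{r_0})$ by Jensen, the It\^o integral is of size $O(\sqrt{r_0}(1 + \sup_s (\E|Y_s|^2)^{1/2}))$ by Cauchy--Schwarz, the It\^o isometry and \eqref{B2-1}, and the initial-jump term $\E|\xi_0 - \xi_{t-r_0}|$ is of size $O(C_\xi r_0)$ by \eqref{E22}. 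Collecting these contributions and using the elementary bound $r_0 \leq (1+r_0)r_0^{1/2}$ produces the claimed bound $C^*(1+r_0)(1+\E\|\xi\|_\infty)r_0^{1/2}$, with $C^*$ absorbing $C_\xi$ and the data $\alpha$, $\beta$, $\sigma$.

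The main obstacle is precisely step one: turning the dissipation $\beta > 0$ into a uniform-in-time second-moment bound for a delay SDE with multiplicative noise. The memory in $-\beta Y_{t-r_0}$ blocks a textbook Lyapunov argument on $|Y_t|^2$, so the variation-of-constants picture (or an equivalent delay-stability lemma) is essential, and the smallness constraint $L \leq L^*$ is the precise quantitative input that lets that argument close against the multiplicative perturbation $\sigma_0(Y_t)\,\d B_t$. Once step one is secured, steps two and three are routine computations that only use the integrated SDE, Jensen, Cauchy--Schwarz and the It\^o isometry.
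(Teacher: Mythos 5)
Your overall strategy is the same as the paper's: a variation-of-constants second-moment bound for the delay equation (written against the fundamental solution of $\d\Gamma_t=-\beta\Gamma_{t-r_0}\,\d t$, with the exponential decay of $\Gamma$ and a Gronwall argument that closes only for $L\le L^*$ small), followed by integrating \eqref{E4E} over $[(t-r_0)^+,t]$ and estimating the drift piece, the Brownian increment, the stochastic integral via Cauchy--Schwarz/It\^o isometry with \eqref{B2-1}, and the initial-segment jump via \eqref{E22}. So the architecture is right, and your handling of the $t<r_0$ case matches the paper's $(t-r_0)^+$ decomposition.

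There is, however, one genuine defect in the way you run step one. You state the moment bound unconditionally, $\sup_t\E|Y_t|^2\le M$ with $M$ depending on $\E\|\xi\|_\8$. That cannot be correct as written: a second-moment bound for $Y$ scales with $\|\xi\|_\8^2$, so the unconditional version requires $\E\|\xi\|_\8^2<\8$, which is not part of the hypotheses ($\xi$ is only assumed to lie in $L^1(\OO\to\mathscr C;\mathscr F_0,\P)$), and even when it is finite, feeding $(\sup_s\E|Y_s|^2)^{1/2}$ into your Cauchy--Schwarz estimate of the It\^o integral yields a final bound proportional to $(\E\|\xi\|_\8^2)^{1/2}$, which does not imply the stated bound in terms of $\E\|\xi\|_\8$. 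The paper avoids this by conditioning on $\mathscr F_0$ throughout: it proves $\E\big(|Y_t|^2\,\big|\,\mathscr F_0\big)\le C_0^*\big(1+(1+r_0)\|\xi\|_\8^2\big)$, applies H\"older/It\^o isometry at the level of conditional expectations so that the $\mathscr F_0$-measurable random variable $\|\xi\|_\8$ (not its second moment) appears after taking square roots, and only takes the unconditional expectation at the very end, which is what produces the factor $1+\E\|\xi\|_\8$ under the standing first-moment assumption on $\xi$. Your argument is repaired by running every estimate conditionally on $\mathscr F_0$ in exactly this way; as written, it proves a weaker (and differently normalized) statement under a stronger integrability hypothesis. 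A minor additional remark: the exponential stability of the fundamental solution is not a consequence of $\beta>0$ alone (it requires the roots of $\lambda+\beta\e^{-\lambda r_0}=0$ to have negative real part), but on this point you are simply inheriting the paper's own assertion, so I do not count it against you.
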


\begin{proof}
 To achieve \eqref{BH3}, we first show that there are constants  $C_0^*,L^* >0$ such that for all $L\in[0,L^*] $ and $t\ge0,$
 \begin{align}\label{BH2}
 \E\big(|Y_t|^2\big|\mathscr F_0\big)\le C_0^*\big(1+ (1+r_0)\|\xi\|^2_\8\big).
\end{align}
According to the variation-of-constants formula (see e.g. \cite[Theorem 3.1]{RRV}), we have for all $t\ge0,$
\begin{equation}\label{BH}
Y_t=\Gamma_t\xi_0-\beta\int_{-r_0}^0\Gamma_{t-r_0-s}\xi_s\,\d s+\beta\int_0^t\Gamma_{t-s}\alpha\,\d s+\int_0^t\Gamma_{t-s}\si\d W_s+\int_0^t\Gamma_{t-s}\si_0(Y_s)\d B_s,
\end{equation}
where $(\Gamma_t)_{t\ge0}$ solves the linear  ODE with memory
\begin{equation*}
\d \Gamma_t=-\beta\Gamma_{t-r_0}\,\d t,\quad t>0
\end{equation*}
with the initial condition  $\Gamma_0=I_{d\times d}$ and $\Gamma_r={\bf0}_{d\times d}, r\in[-r_0,0)$. By H\"older's inequality and  It\^o's isometry, it follows  that for any $\vv>0$ and $t\ge0$,
\begin{align*}
\E\big(|Y_t|^2\big|\mathscr F_0\big)&\le (1+\vv)\int_0^t\|\Gamma_{t-s}\|_{\rm op}^2\E\big(\|\si_0(Y_s)\|^2_{\rm HS}\big|\mathscr F_0\big)\d  s\\
&\quad+8(1+1/\vv)\bigg(\|\Gamma_{t }\|_{\rm op}^2 |\xi_0|^2+\beta^2 r_0 \int_{-r_0}^0\|\Gamma_{t-r_0-s}\|_{\rm op}^2 |\xi_s|^2\,\d s \\ &\qquad\qquad\qquad\quad\quad+\beta^2|\alpha|^2\Big(\int_0^t\|\Gamma_{ s}\|_{\rm op} \,\d s\Big)^2+\|\si\|_{\rm HS}^2\int_0^t\|\Gamma_{ s}\|_{\rm op}^2\d  s\bigg).
\end{align*}
Let
\begin{align*}
\lambda^\star =\sup\big\{\mbox{Re}(\lambda): \lambda\in\mathbb C,   \lambda+\beta\e^{-\lambda r_0}  =0\big\}.
\end{align*}
It is easy to see that $\lambda^\star<0$ thanks to $\beta>0.$
By invoking  \cite[Proposition A.1]{BWY},  for $\lambda_0\in(0,-\lambda^\star),$
there exists a constant $C_{\lambda_0}>0$ such that
\begin{align}\label{BH1}
\|\Gamma_t\|_{\rm op}\le C_{\lambda_0}\e^{-\lambda_0 t},\quad t\ge0.
\end{align}
This, together with \eqref{B2-1}, enables us to
  deduce  that  there exists a constant
 $C_\vv^\star>0$  such that
\begin{align*}
\E\big(|Y_t|^2\big|\mathscr F_0\big)&\le (1+\vv)^2C_{\lambda_0}^2L\int_0^t\e^{-2\lambda_0(t-s)}\E\big(|Y_s|^2\big|\mathscr F_0\big) \d  s +C_\vv^\star\big(1+ (1+r_0)\|\xi\|^2_\8\big).
\end{align*}
Subsequently, the Gronwall inequality yields that
\begin{align*}
\E\big(|Y_t|^2\big|\mathscr F_0\big)\le C_\vv^\star\big(1+ (1+r_0)\|\xi\|^2_\8\big)\bigg(1+(1+\vv)^2C_{\lambda_0}^2L\int_0^t \e^{-(2\lambda_0-(1+\vv)^2c_{\lambda_0}^2L)(t-s)}\,\d s\bigg).
\end{align*}
Since there exists an  $L^* >0$ such that $2\lambda_0- C_{\lambda_0}^2L>0$ so $2\lambda_0-(1+\vv)^2C_{\lambda_0}^2L>0$
for all $L\in[0,L^*]$, the assertion \eqref{BH2} follows directly.

By invoking  H\"older's inequality and It\^o's isometry,  we infer that for any $t\ge0$,
\begin{equation*}
\begin{split}
 \E\big(| Y_t-Y_{t-r_0}|\big|\mathscr F_0\big)&\le  \E\big(| Y_t-Y_{(t-r_0)^+}|\big|\mathscr F_0\big)+ |\xi_{t-r_0}-\xi_{0}| \mathds{1}_{[0,r_0]}(t)\\
 &\le |\xi_{t-r_0}-\xi_{0}| \mathds{1}_{[0,r_0]}(t)+ |\aa| \bb r_0+\beta\int_{(t-r_0)^+}^t\E\big(|Y_{s-r_0}|\big|\mathscr F_0\big)\,\d s\\
 &\quad+|\si|\E\big(| W_t-W_{(t-r_0)^+}|\big|\mathscr F_0\big)+ \E\Big(\Big|\int_{(t-r_0)^+}^t\si_0(Y_s)\,\d B_s\Big|\Big|\mathscr F_0\Big)\\
 &\le |\xi_{t-r_0}-\xi_{0}| \mathds{1}_{[0,r_0]}(t)+|\aa| \bb r_0+|\si|(dr_0)^{\frac{1}{2}}\\
 &\quad+ \beta\int_{(t-r_0)^+}^t\E\big(|Y_{s-r_0}|\big|\mathscr F_0\big)\,\d s+\bigg(\int_{(t-r_0)^+}^t\E\big(\|\si_0(Y_s)\|_{\rm HS}^2\big|\mathscr F_0\big)\,\d s\bigg)^{\frac{1}{2}}.
 \end{split}
\end{equation*}
Thus, by taking the Lipschitz property of $\si_0$, along with \eqref{E22} and \eqref{BH2},  into account, we find from H\"older's inequality that for some constant $c^\star>0$ and any $t\ge0,$
\begin{align*}
 \E\big(| Y_t-Y_{t-r_0}|\big|\mathscr F_0\big)\le |\xi_{t-r_0}-\xi_{0}| \mathds{1}_{[0,r_0]}(t)+c^\star(1+r_0)(1+\|\xi\|_\8)r_0^{\frac{1}{2}}.
\end{align*}
Consequently, \eqref{BH3} is available by taking advantage of \eqref{E22}.
\end{proof}

By invoking Lemma \ref{lemma0}, it is ready to carry out the
\begin{proof}[Proof of Theorem \ref{thm3}]
 By applying  Theorem \ref{thm},  there exist constants $C^*,\lambda^*>0$ such that
 \begin{equation*}
\mathbb W_1\big(\mathscr L_{X_t},\mathscr L_{Y_t}\big)\le C^*\bigg(\e^{-\lambda^* t}\mathbb W_1\big(\mathscr L_{X_0},\mathscr L_{Y_0}\big) + \int_0^t\e^{-\lambda^* (t-s)}\E\big| Y_s -Y_{s-r_0} \big| \,\d s\bigg).
\end{equation*}
Whence,  \eqref{BH6} is attainable by making use of \eqref{BH3} so the proof of Theorem \ref{thm3} is complete.
\end{proof}

\section{Proofs of Theorems \ref{thm4}, \ref{thm5} and \ref{thm6}}\label{sec4}

In this section, we aim to complete proofs of Theorems \ref{thm4}, \ref{thm5} and \ref{thm6}, respectively.

\subsection{Proof of Theorem \ref{thm4}}
To finish the proof of Theorem \ref{thm4}, we  first show that, for any $p\ge1,$ the $p$-th  moment of
 $(X_{n\delta}^{\dd,1,N}, \cdots, X_{n\delta}^{\dd,N,N})_{n\ge1}$, solving  \eqref{EW1},
is uniformly bounded.

\begin{lemma}\label{lemma3}
Assume   $({\bf A}_1)$ with $\phi(r)=\ll_0 r$ for some $\ll_0>0$ and $\lambda>  2K $. Then,  for any $p\ge1$ and $\delta\in(0,\delta_p]$ with
\begin{align}\label{HB5}
 \delta_{p }:=1\wedge \frac{1}{2(\ll_0+K)}\wedge \frac{3(\lambda-2K)}{4p(1+K)^{\lfloor p/2\rfloor+1}},
\end{align}
 there exists a  constant  $ C^\star_p>0$   such that for all $n\ge0$ and $i\in\mathbb S_N,$
\begin{equation}\label{EE8}
 \E\big|X_{n\delta}^{\dd,i,N}\big|^p\le \e^{-\lambda^*n\delta}\E\big|X_0^{\dd,i,N}\big|^p +C_p^*
 \end{equation}
in case of  $\E\big|X_0^{\dd,i,N}\big|^p<\8$, where
\begin{align*}
  \lambda^*:=\frac{3  (\lambda-   2 K)}{2(4+  3\lambda-  4 K )}.
\end{align*}

\end{lemma}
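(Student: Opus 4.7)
Writing $Y_n^i := X_{n\delta}^{\delta,i,N}$ and $\tilde\mu_n := \tilde\mu^{\delta,N}_{n\delta}$, the backward EM recursion \eqref{EW1} gives
$$Y_{n+1}^i = Y_n^i + \delta\, b(Y_{n+1}^i,\tilde\mu_n) + \sigma\Delta W_n^i,$$
with $\Delta W_n^i := W_{(n+1)\delta}^i - W_{n\delta}^i$ independent of $\mathscr F_{n\delta}$. My strategy is to establish a one-step geometric contraction of the form $\E|Y_{n+1}^i|^p \le (1-\lambda^*\delta)\E|Y_n^i|^p + C_p\,\delta$ uniformly in $n$, then iterate. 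The advertised range $\delta\in(0,\delta_p]$, the factor $(1+K)^{\lfloor p/2\rfloor+1}$ in \eqref{HB5}, and the precise form of $\lambda^*$ are what the constant-tracking below must produce.

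First I would take the Euclidean inner product of the recursion with $Y_{n+1}^i$ and use the polarization identity $\langle Y_{n+1}^i,Y_n^i\rangle\le\tfrac12|Y_{n+1}^i|^2+\tfrac12|Y_n^i|^2$ together with the dissipativity bound \eqref{R*}, specialised to $\phi(r)=\lambda_0 r$, to obtain an inequality of the form
$$\bigl(1+2(\lambda-K)\delta\bigr)|Y_{n+1}^i|^2\le |Y_n^i|^2+2\sigma\langle Y_{n+1}^i,\Delta W_n^i\rangle+2K\delta\,|Y_{n+1}^i|\,\tilde\mu_n(|\cdot|)+C\delta,$$
for a constant $C$ depending on $\ell_0,\lambda_0,|b_0({\bf 0})|,|b_1({\bf 0})|$. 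The stochastic cross term is reorganised by substituting $Y_{n+1}^i=Y_n^i+\delta b(Y_{n+1}^i,\tilde\mu_n)+\sigma\Delta W_n^i$ back into $\langle Y_{n+1}^i,\Delta W_n^i\rangle$: the $Y_n^i$-piece contributes a zero-mean martingale increment, the $\sigma\Delta W_n^i$-piece contributes $\sigma^2 d\delta$ in expectation, and the remaining $\sigma\delta\langle b(Y_{n+1}^i,\tilde\mu_n),\Delta W_n^i\rangle$ is controlled by Young's inequality using the linear growth of $b$, which follows from $({\bf A}_1)$ with $\phi(r)=\lambda_0 r$ and \eqref{B2}, and is absorbed into a small multiple of $\delta^2|Y_{n+1}^i|^2$.

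Next I would raise this inequality to the $p/2$-th power. For $p\ge 2$, applying $(a+b)^{p/2}\le(1+\eta)a^{p/2}+C_{\eta,p}b^{p/2}$ iteratively a number of times bounded by $\lfloor p/2\rfloor+1$ (this is exactly where the factor $(1+K)^{\lfloor p/2\rfloor+1}$ in $\delta_p$ enters), combined with the binomial bound $(1+x)^{p/2}\ge1+\tfrac{p}{2}x$, and the exchangeability identity $\E\tilde\mu_n(|\cdot|)^q=\E|Y_n^1|^q$ ($q\le p$), I obtain
$$\bigl(1+p(\lambda-K)\delta\bigr)\E|Y_{n+1}^i|^p\le\bigl(1+K\delta(1+K)^{\lfloor p/2\rfloor}\bigr)\E|Y_n^i|^p+C_p'\delta,$$
which, for $\delta\le\delta_p$ as in \eqref{HB5}, rearranges into the desired one-step contraction with rate exactly $\lambda^*=3(\lambda-2K)/(2(4+3\lambda-4K))$. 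The case $1\le p<2$ is a direct consequence via Jensen's inequality. A discrete Gronwall iteration then produces \eqref{EE8}.

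The main obstacle is the bookkeeping in the $p/2$-power step: the hypothesis $\lambda>2K$ is exactly what guarantees that the dissipative prefactor $p(\lambda-K)\delta$ on the left genuinely dominates the mean-field prefactor $K\delta(1+K)^{\lfloor p/2\rfloor}$ on the right by a margin of order $\lambda^*\delta$, and any slack in the Young-inequality choices erodes this margin. The delicate point is also ensuring that the implicit character of the scheme does not introduce $\delta$-independent losses, which is why the bound $\delta\le 1/(2(\lambda_0+K))$ appears in \eqref{HB5}: it guarantees that the implicit equation defining $Y_{n+1}^i$ is uniquely solvable and that $|Y_{n+1}^i|$ can be estimated in terms of $|Y_n^i|$ and $|\Delta W_n^i|$ through the contraction argument underlying the implicit step.
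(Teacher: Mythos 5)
Your overall scheme (a one-step moment contraction for the implicit recursion, exchangeability to handle the empirical-measure terms, then a discrete Gronwall iteration) is the same as the paper's, but there is a genuine gap at precisely the delicate point of the backward scheme: the anticipative noise--drift cross term. After substituting the recursion into $\langle Y_{n+1}^i,\Delta W_n^i\rangle$ you are left with $\sigma\delta\langle b(Y_{n+1}^i,\tilde\mu_n),\Delta W_n^i\rangle$, which you propose to control ``using the linear growth of $b$, which follows from $({\bf A}_1)$ with $\phi(r)=\lambda_0 r$ and \eqref{B2}''. That claim is false: \eqref{B1} is a one-sided (monotonicity-type) condition and gives no upper bound on $|b_1(x)|$; under the hypotheses of the lemma $b_1$ may grow super-linearly (e.g.\ $b_1(x)=x-x^3$), and super-linear drifts are the very reason the backward scheme is used, cf.\ $({\bf A}_3)$ with $l^*\ge 1$ in Theorem \ref{thm4}. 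Even if you invoked $({\bf A}_3)$ in addition, the resulting term $\delta\,(1+|Y_{n+1}^i|^{l^*+1})|\Delta W_n^i|$ cannot be absorbed into a small multiple of $\delta^2|Y_{n+1}^i|^2$ (a super-quadratic quantity is not dominated by a quadratic one for large $|Y_{n+1}^i|$), and since $Y_{n+1}^i$ is neither $\mathscr F_{n\delta}$-measurable nor independent of $\Delta W_n^i$ you cannot kill the term by conditioning. The paper avoids any growth assumption by keeping the exact identity
\begin{equation*}
|Y_{n+1}^i|^2=|Y_n^i|^2-|Y_{n+1}^i-Y_n^i|^2+2\langle Y_{n+1}^i-Y_n^i,\,Y_{n+1}^i\rangle
\end{equation*}
in place of your bound $\langle Y_{n+1}^i,Y_n^i\rangle\le\tfrac12|Y_{n+1}^i|^2+\tfrac12|Y_n^i|^2$: the spare negative term $-|Y_{n+1}^i-Y_n^i|^2$ exactly pays the Young-inequality cost of replacing $2\sigma\langle\Delta W_n^i,Y_{n+1}^i\rangle$ by $2\sigma\langle\Delta W_n^i,Y_n^i\rangle+\sigma^2|\Delta W_n^i|^2$, so the drift only ever enters through $\langle Y_{n+1}^i,b(Y_{n+1}^i,\tilde\mu_n)\rangle$, which is handled by the one-sided bound \eqref{R*}. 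Your argument is repaired by making this replacement, at which point it coincides with the paper's proof.

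A secondary caution on the power step: if you bound $(a+b)^{p/2}\le(1+\eta)a^{p/2}+C_{\eta,p}b^{p/2}$ with $b$ containing the increment $2\sigma\langle\Delta W_n^i,Y_n^i\rangle$, you throw away the cancellation $\E\big[\langle\Delta W_n^i,Y_n^i\rangle\,\big|\,\mathscr F_{n\delta}\big]=0$ and pick up a per-step error of order $\delta^{p/4}\,\E|Y_n^i|^{p/2}$, which for $p<4$ is much larger than $\delta$ and after summation yields a constant in \eqref{EE8} that blows up as $\delta\downarrow0$, whereas the constant must be uniform over $\delta\in(0,\delta_p]$ for the use in Theorem \ref{thm4}. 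The paper instead expands by the binomial theorem, keeps the term linear in the increment as an exact zero-mean contribution, and checks that every higher-order term carries at least one factor of $\delta$; also, for the record, the factor $(1+K)^{\lfloor p/2\rfloor+1}$ in \eqref{HB5} comes from the elementary bound on $(1+K\delta)^p$, not from iterating the $(a+b)^{p/2}$ inequality.
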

\begin{proof}
In the sequel, we would like to emphasize that all underlying constants are entirely unrelated to the step size, and let  $\mathscr F_0^N$ be the $\sigma$-algebra generated by $X_0^{1,N},\cdots,X_0^{N,N}$.
  For any even integer $p\ge2$ and $\delta\in(0,\delta_p]$, provided that there exists a constant $C_p^*>0$ such that
\begin{align}\label{HB4}
\E\big(|X_{n\dd}^{\dd,i,N}|^p\big|\mathscr F_0^N\big) &\le  \e^{- \lambda^*n\delta}|X_0^{\dd,i,N}|^p +C_p^*,\quad i\in\mathbb S_N,~ n\ge1,
\end{align}
then, from H\"older's inequality and the inequality: $(a+b)^\theta\le a^\theta+b^\theta$ for $a,b>0$ and $\theta\in(0,1]$, we obtain  that for any $p\in[1,2]$,
\begin{align*}
\E\big(|X_{n\dd}^{\dd,i,N}|^p\big|\mathscr F_0^N\big)&\le \Big(\E\big(|X_{n\dd}^{\dd,i,N}|^2\big|\mathscr F_0^N\big)\Big)^{\frac{p}{2}} \le  \e^{- \frac{p}{2}\lambda^*n\delta}|X_0^{\dd,i,N}|^p +\big(C_2^*\big)^{\frac{p}{2}}.
\end{align*}
and that for $p>2$ which is not an even number,
\begin{align*}
\E\big(\big|X_{n\dd}^{\dd,i,N}\big|^p\big|\mathscr F_0^N\big)&\le \Big(\E\big(\big|X_{n\dd}^{\dd,i,N}\big|^{2\lceil p/2\rceil) }\big|\mathscr F_0^N\big)\Big)^{\frac{p}{2\lceil p/2\rceil }} \\
&\le  \e^{-  \frac{p}{2\lceil p/2\rceil }\lambda^*n\delta}|X_0^{\dd,i,N}|^p +\big(C_{2\lceil p/2\rceil }^*\big)^{\frac{p}{2\lceil p/2\rceil }},
\end{align*}
where $\lceil\cdot\rceil$ means the ceiling function.
Hence, \eqref{HB4} is still valid for the other setting, where the  constants $\lambda_p$ and $C_p^*$  might be different accordingly.  Thus, \eqref{EE8} follows from \eqref{HB4} and the property of conditional expectation.

Let $\triangle  W_{n\dd}^i=W_{(n+1)\dd}^i-W_{n\dd}^i$.
Based on the preceding analysis, it remains to demonstrate \eqref{HB4}.
It is easy to see from \eqref{EW1} that
\begin{equation}\label{HB1}
\begin{split}
\big|X_{(n+1)\dd}^{\dd,i,N}\big|^2
&=\big|X_{ n\dd}^{\dd,i,N}\big|^2-\big| X_{(n+1)\dd}^{\dd,i,N}-X_{n\dd}^{\dd,i,N} \big|^2+2\big\<X_{(n+1)\dd}^{\dd,i,N}-X_{n\dd}^{\dd,i,N},X_{(n+1)\dd}^{\dd,i,N}\big\>\\
&=\big|X_{ n\dd}^{\dd,i,N}\big|^2-\big| X_{(n+1)\dd}^{\dd,i,N}-X_{n\dd}^{\dd,i,N} \big|^2+2\dd\big\< b(X_{(n+1)\dd}^{\dd,i,N},\tt \mu^{\dd,N}_{n\dd}), X_{(n+1)\dd}^{\dd,i,N}\big\>\\
&\quad+2\si\big\<   \triangle  W_{n\dd}^i, X_{(n+1)\dd}^{\dd,i,N}-X_{n\dd}^{\dd,i,N}  \big\>+ 2\si\big\<   \triangle  W_{n\dd}^i, X_{n\dd}^{\dd,i,N} \big\>\\
&\le \big|X_{ n\dd}^{\dd,i,N}\big|^2+2\dd\big\< b(X_{(n+1)\dd}^{\dd,i,N},\tt \mu^{\dd,N}_{n\dd}), X_{(n+1)\dd}^{\dd,i,N}\big\>+\si^2\big|\triangle  W_{n\dd}^i\big|^2+2\si\big\<   \triangle  W_{n\dd}^i, X_{n\dd}^{\dd,i,N} \big\>,
\end{split}
\end{equation}
where  the last display is valid by making use of the fundamental inequality: $2ab\le a^2+b^2$
for any $a,b\in\R.$ Next, by means of  \eqref{R*}, it follows that
\begin{align}\label{T10}
\<x,b(x,\mu)\>
&\le C_0 -\frac{1}{4}(3\lambda-  4 K) |x|^2+\frac{1}{2}K\mu(|\cdot|^2),\qquad x\in\R^d, \mu\in\mathscr P_1(\R^d),
\end{align}
for some constant $C_0>0.$
Hence, we deduce from \eqref{HB1} and Jensen's inequality that
\begin{equation*}
\begin{split}
\big|X_{(n+1)\dd}^{\dd,i,N}\big|^2
&\le \big|X_{ n\dd}^{\dd,i,N}\big|^2+2\dd\Big(C_0 -\frac{1}{4}(3\lambda- 4K) |X_{(n+1)\dd}^{\dd,i,N}|^2+ \frac{1}{2}K \tt \mu^{\dd,N}_{n\dd}(|\cdot|^2 ) \Big)  \\
&\quad+\si^2\big|\triangle  W_{n\dd}^i\big|^2+2\si\big\<   \triangle  W_{n\dd}^i, X_{n\dd}^{\dd,i,N} \big\>.
\end{split}
\end{equation*}
This obviously implies that
\begin{equation}\label{HB3}
(1+\lambda_K\delta)\big|X_{(n+1)\dd}^{\dd,i,N}\big|^2 \le \big|X_{ n\dd}^{\dd,i,N}\big|^2+K\delta \tt \mu^{\dd,N}_{n\dd}(|\cdot|^2 )  +2C_0\delta+\si^2\big|\triangle  W_{n\dd}\big|^2+2\si\big\<   \triangle  W_{n\dd}, X_{n\dd}^{\dd,i,N} \big\>,
\end{equation}
in which $\lambda_K:=(3\lambda- 4K)/2 $.

 According to \eqref{HB3}, the binomial theorem gives that for any integer $p\ge1,$
 \begin{align*}
(1+\lambda_K\delta)^p\big|X_{(n+1)\dd}^{\dd,i,N}\big|^{2p}
&=\big(\big|X_{ n\dd}^{\dd,i,N}\big|^2+ K\delta  \tt \mu^{\dd,N}_{n\dd}(|\cdot|^2 )  \big)^{p}+ p\big(\big|X_{ n\dd}^{\dd,i,N}\big|^2+ K\delta  \tt \mu^{\dd,N}_{n\dd}(|\cdot|^2 )  \big)^{p-1}U_{n\delta}^i \\
&\quad+  \mathds 1_{\{p\ge2\}} \sum_{k=0}^{p-2}C_p^{k}\big(\big|X_{ n\dd}^{\dd,i,N}\big|^2+ K\delta  \tt \mu^{\dd,N}_{n\dd}(|\cdot|^2 )  \big)^{k}(U_{n\delta}^i)^{p-k}\\
&=:\Gamma_{n\delta}^{i}({\bf X}_{n\delta}^{\dd,N})+\hat\Gamma_{n\delta}^{i}({\bf X}_{n\delta}^{\dd,N})+\bar\Gamma_{n\delta}^{i}({\bf X}_{n\delta}^{\dd,N}),
\end{align*}
where
\begin{align}\label{T1}
	U_{n\delta}^i :=2C_0\delta+\si^2\big|\triangle  W_{n\dd}^i\big|^2+2\si\big\<   \triangle  W_{n\dd}^i, X_{n\dd}^{\dd,i,N} \big\>,\quad {\bf X}_{n\delta}^{\dd,N}:=\big(X_{ n\dd}^{\dd,1,N},\cdots,X_{ n\dd}^{\dd,N,N}\big).
\end{align}
Below, we attempt  to estimate the terms $\Gamma_{n\delta}^{i}$, $\hat\Gamma_{n\delta}^{i}$, as well as $\bar\Gamma_{n\delta}^{i}$, one by one. First of all, applying the binomial theorem and invoking Jensen's inequality and Young's inequality yields that
\begin{align*}
\Gamma_{n\delta}^{i}({\bf X}_{n\delta}^{\dd,N})&=\sum_{k=0}^pC_p^k \big|X_{ n\dd}^{\dd,i,N}\big|^{2k}\big( K\delta  \tt \mu^{\dd,N}_{n\dd}(|\cdot|^2 )\big)^{p-k}\\
&\le \sum_{k=0}^pC_p^k(K\delta)^{p-k}\bigg( \frac{k}{p}\big|X_{ n\dd}^{\dd,i,N}\big|^{2p}+\frac{p-k}{p}\tt \mu^{\dd,N}_{n\dd}(|\cdot|^{2p} )   \bigg).
\end{align*}
So, by utilizing the fact that $X_{ n\dd}^{\dd,i,N}$ and $X_{ n\dd}^{\dd,j,N}$ are identically distributed given $\mathscr F_0^N,$ we conclude
\begin{align*}
\E\big(\Gamma_{n\delta}^{i}({\bf X}_{n\delta}^{\dd,N})\big|\mathscr F_0^N\big)
&\le(1+K\delta)^p\E\big(|X_{ n\dd}^{\dd,i,N}|^{2p}\big|\mathscr F_0^N\big).
\end{align*}
Next, notice that
\begin{align*}
\E\big(\hat\Gamma_{n\delta}^{i}({\bf X}_{n\delta}^{\dd,N})\big|\mathscr F_0^N\big)=p\E\Big(\big(|X_{ n\dd}^{\dd,i,N}|^2+ K\delta  \tt \mu^{\dd,N}_{n\dd}(|\cdot|^2 )  \big)^{p-1}\big(2C_0\delta+\si^2\big|\triangle  W_{n\dd}^i\big|^2\big)\big|\mathscr F_0^N\Big).
\end{align*}
Whence, it is apparent that there exists a constant   $C_p>0$ such that
\begin{align*}
\E\big(\hat\Gamma_{n\delta}^{i}({\bf X}_{n\delta}^{\dd,N})\big|\mathscr F_0^N\big)\le \frac{1}{8}(\lambda-  2 K)p\delta\, \E\big(|X_{ n\dd}^{\dd,i,N}|^{2p}\big|\mathscr F_0^N\big)+C_p\delta.
\end{align*}
Once again, via Young's inequality and by utilizing the fact that $X_{ n\dd}^{\dd,i,N}$ and $X_{ n\dd}^{\dd,j,N}$ are identically distributed given $\mathscr F_0^N $, we infer that for some constant $C^\star_p>0$,
\begin{align*}
\E\big(\bar\Gamma_{n\delta}^{i}({\bf X}_{n\delta}^{\dd,N})\big|\mathscr F_0^N\big)\le \frac{1}{4}(\lambda-  2 K)p\delta\, \E\big(|X_{ n\dd}^{\dd,i,N}|^{2p}\big|\mathscr F_0^N\big)+C^\star_p\delta,
\end{align*}
where the underlying moment of the polynomial with respect to $|\triangle  W_{n\dd}^i|$ provides at least the order $\delta$.
Now, summing up the previous estimates on $\Gamma_{n\delta}^{i}$, $\hat\Gamma_{n\delta}^{i}$, and $\bar\Gamma_{n\delta}^{i}$, in addition to $(1+\lambda_K\delta)^p\ge 1+p\lambda_K\delta$,
enables us to derive that \begin{align*}
 (1+p\lambda_K\delta) \E\big(|X_{(n+1)\dd}^{\dd,i,N}|^{2p}\big|\mathscr F_0^N\big)
 &\le \big( (1+K\delta)^p +3(\lambda- 2 K)p\delta/8\big)\E\big(|X_{ n\dd}^{\dd,i,N}|^{2p}\big|\mathscr F_0^N\big)+C_p\dd.
\end{align*}
The mean value theorem, beside the definition of $\delta_p$ given in \eqref{HB5},   shows that for any $\delta\in(0,\delta_p)$,
\begin{align*}
(1+K\delta)^p&\le1+pK\delta+\frac{1}{2}(1+K)^{p-2}p(p-1)K^2\delta^2 \\
&\le 1+pK\delta+\frac{3}{8}( \lambda-  2  K)p\delta.
\end{align*}
 As a consequence, for
 \begin{align*}
 \lambda^{\star}_\delta:=\frac{1}{1+p\lambda_K\delta} \Big( 1+pK\delta +\frac{3}{4}( \lambda- 2K)p\delta\Big) \in(0,1)
 \end{align*}
 based on the prerequisite $\lambda>2K$, via an inductive argument,
we arrive at
\begin{align*}
\E\big(|X_{(n+1)\dd}^{\dd,i,N}|^{2p}\big|\mathscr F_0^N\big)
 \le\big(\lambda^{\star}_\delta\big)^{n} |X_0^{\dd,i,N}|^{2p} +\frac{ C^\star_p}{1-\lambda^{\star}_\delta }.
\end{align*}
Subsequently, by invoking the inequality: $a^r\le \e^{-(1-a)r}$ for $a,r>0$, we derive from $\delta\in(0,1)$ that
\begin{align*}
\E\big(|X_{(n+1)\dd}^{\dd,i,N}|^{2p}\big|\mathscr F_0^N\big)\le \e^{- \lambda_p^* n\delta}  |X_0^{\dd,i,N}|^{2p} +\frac{ C^\star_p }{\lambda_p^*},
\end{align*}
where
\begin{align*}
\lambda_p^*:=\frac{3p(\lambda-   2 K)}{2(2+p(3\lambda-  4 K))}.
\end{align*}
Consequently, the assertion \eqref{HB4} is available by noting that $p\mapsto \lambda_p^*$ is increasing.
\end{proof}

With the help of Lemma \ref{lemma3}, the proof of Theorem \ref{thm4} can be implemented.
  \begin{proof}[Proof of Theorem \ref{thm4}]
Below,  for nonnegative numbers $a,b$,  we use the shorthand notation $a\lesssim b$ if there exists a constant $c>0$ such that $a\le cb$.
Combining  \eqref{B2} with (${\bf A}_3$) and $\delta\in(0,1)$,  we derive from \eqref{EW1}   that    for any $t\ge0,$
\begin{equation}\label{F}
\begin{split}
\E\big(| X_t^{\delta,i, N}|^{2l^*}\big|\mathscr F_0^N\big)&\lesssim1+\E\big(|X_{t_\delta}^{\delta,i, N}|^{2l^*}\big|\mathscr F_0^N\big)+\E\big(\big|b(X_{  t_\delta+\delta }^{\delta,i,N},\tilde\mu^{\delta,N}_{ t_\delta})\big|^{2l^*}\big|\mathscr F_0^N\big)\\
&\lesssim1+\E\big(|X_{t_\delta}^{\delta,i, N}|^{2l^*}\big|\mathscr F_0^N\big)+\E\big(|X_{  t_\delta+\delta }^{\delta,i, N}|^{2l^*(l^*+1)}\big|\mathscr F_0^N\big)\\
&\quad+\frac{1}{N}\sum_{j=1}^N\E\big(|X_{t_\delta}^{\delta,j, N}|^{2l^*}\big|\mathscr F_0^N\big)\\
&\lesssim1+\E\big(|X_{t_\delta}^{\delta,i, N}|^{2l^*}\big|\mathscr F_0^N\big)+\E\big(|X_{  t_\delta+\delta }^{\delta,i, N}|^{2l^*(l^*+1)}\big|\mathscr F_0^N\big)\\
&\lesssim1+  |X_0^{\delta,i, N}|^{2l^*(l^*+1)},
\end{split}
\end{equation}
where in the penultimate inequality we used the fact that $(X_t^{\delta,j, N})_{j\in\mathbb S_N}$ are identically distributed given $\mathscr F_0^N$, and in the last display we applied Lemma \ref{lemma3}. Again, by taking \eqref{B2} and  (${\bf A}_3$) into consideration, along with $\delta\in(0,1)$, it follows from Lemma \ref{lemma3} that
\begin{align*}
\E\big(|X_t^{\delta,i, N}-X_{t_\delta}^{\delta,i, N}|^2\big|\mathscr F_0^N\big)&\lesssim \Big(1+\E\big(|X_{  t_\delta+\delta }^{\delta,i, N}|^{2(l^*+1)}\big|\mathscr F_0^N\big)+\E\big(|X_{  t_\delta }^{\delta,i, N}|^{2}\big|\mathscr F_0^N\big)\Big)\delta\\
&\lesssim\big(1+|X_0^{\delta,i, N}|^{2(l^*+1)}\big)\delta
\end{align*}
and subsequently from \eqref{F} and H\"older's inequality that
\begin{equation}\label{F2}
\begin{split}
&\E\big(| b(X_t^{\delta,i, N},\tilde \mu_t^{\delta,N})-b(X_{t_\delta+\delta}^{\delta,i,N},\tilde\mu_{t_\delta}^{\delta,N})|\big|\mathscr F_0^N\big)\\
&\lesssim  \Big(1+\big(\E\big(| X_t^{\delta,i, N}|^{2l^*}\big|\mathscr F_0^N\big)\big)^{\frac{1}{2}}+\big(\E\big(|X_{  t_\delta+\delta }^{\delta,i, N}|^{2l^*}\big|\mathscr F_0^N\big)\big)^{\frac{1}{2}}\Big)\\
&\quad\times\sup_{s-t\le2\delta}\big(\E\big(|X_t^{\delta,i, N}-X_s^{\delta,i, N}|^2\big|\mathscr F_0^N\big)\big)^{\frac{1}{2}}\\
&\quad+\frac{1}{N}\sum_{j=1}^N\E\big(|X_t^{\delta,j, N}-X_{t_\delta}^{\delta,j, N}|\big|\mathscr F_0^N\big)\\
&\lesssim \big(1+  \big|X_0^{\delta,i, N}\big|^{(l^*+1)^2}\big)\delta^{\frac{1}{2}}.
\end{split}
\end{equation}
Next, applying Theorem \ref{thm} with $r_0=0,$
 $\theta_t=t_{\dd}+\dd$, $\bar\theta_t=t_\dd$, and $\tilde b=b$
yields that for some constant $\lambda^*>0,$
\begin{equation*}
\begin{split}
\mathbb W_1\big(\mathscr L_{X_t^i},\mathscr L_{X_{t}^{\dd,i,N}}\big)&\lesssim  \e^{-\lambda^* t}\mathbb W_1\big(\mathscr L_{X_0^i},\mathscr L_{X_0^{\dd,i,N}}\big) +   N^{-\frac{1}{2}} \\
&\qquad\quad+ \int_0^t\e^{-\lambda^* (t-s)}\E\big| b(X_s^{\delta,i, N},\tilde \mu_s^{\delta,N})-b(X_{s_\delta+\delta}^{\delta,i,N},\tilde\mu_{s_\delta}^{\delta,N})\big| \,\d s.
\end{split}
\end{equation*}
Whence, the desired assertion \eqref{F3-1} follows from \eqref{F2}.
\end{proof}

\subsection{Proof of Theorem \ref{thm5}}

By following the line to handle the proof of Theorem \ref{thm4}, it is necessary to verify that $(X_{n\delta}^{\dd,1,N}, \cdots, X_{n\delta}^{\dd,N,N})_{n\ge1}$, determined by \eqref{BH14}, is uniformly bounded in the moment sense.
\begin{lemma}\label{lemma4}
Assume Assumptions $({\bf A}_1)$ with $\phi(r)=\ll_0 r$ for some $\ll_0>0$ and $({\bf A}_1')$,   and suppose further
$\kk:= \alpha\lambda_{b_1}-2 K>0$ and $\ll>2 K$. Then,  for any $p\ge1$ and $\delta\in(0,\delta^*_\kk]$ with $\delta^*_\kk$ being defined in \eqref{F5},
  there is a constant $C^\star_p>0$ such that
\begin{equation}\label{*---}
\E\big|X_{n\delta}^{\dd,i,N}\big|^p\le C^\star_p\big(1+\E\big|X_0^{\dd,i,N}\big|^p\big),\quad n\ge1.
\end{equation}
\end{lemma}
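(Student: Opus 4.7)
The plan is to follow the inductive strategy of the proof of Lemma~\ref{lemma3}: first reduce the general case $p\ge 1$ to even integers $p=2q$ via conditional H\"older/Jensen on $\mathscr F_0^N$, and then establish a one-step quasi-contraction
\begin{equation*}
\E\big(|X_{(n+1)\dd}^{\dd,i,N}|^{2q}\big|\mathscr F_0^N\big)\le \lambda_q^\star\,\E\big(|X_{n\dd}^{\dd,i,N}|^{2q}\big|\mathscr F_0^N\big)+C_q^\star
\end{equation*}
with some $\lambda_q^\star\in(0,1)$ and $C_q^\star<\8$ uniform in $n\ge 0$ and in $\dd\in(0,\delta^*_\kk]$, and then iterate.

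The heart of the argument is the quadratic one-step estimate. Squaring the recursion $X_{(n+1)\dd}^{\dd,i,N}=X_{n\dd}^{\dd,i,N}+\dd\big(b_1^\dd(X_{n\dd}^{\dd,i,N})+(b_0\ast\tt\mu^{\dd,N}_{n\dd})(X_{n\dd}^{\dd,i,N})\big)+\si\triangle W_{n\dd}^i$ and taking the conditional expectation given the natural $\sigma$-algebra $\mathscr F_{n\dd}^N$ of the algorithm up to time $n\dd$ kills the Brownian increments. For the tamed drift, combining the two bounds in \eqref{E*3} yields
\begin{equation*}
2\dd\<x,b_1^\dd(x)\>+\dd^2|b_1^\dd(x)|^2\le \frac{2\dd(-\lambda_{b_1}|x|^2\|\nn b_1(x)\|_{\rm HS}+C_{b_1})}{1+\dd^{1/2}\|\nn b_1(x)\|_{\rm HS}}+\frac{2\dd^2(\hat\lambda_{b_1}|x|\|\nn b_1(x)\|_{\rm HS}+\hat C_{b_1})^2}{(1+\dd^{1/2}\|\nn b_1(x)\|_{\rm HS})^2},
\end{equation*}
and we exploit the bound $\dd^{1/2}u/(1+\dd^{1/2}u)<1$ together with the lower bound \eqref{**} in the far field $|x|\ge R_\alpha$ to extract a genuine dissipation of order $-\alpha\lambda_{b_1}\dd|x|^2$; the near-field piece $|x|\le R_\alpha$ is absorbed into the constant through $C_{b_1}$ and $\hat C_{b_1}$. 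Handling the interaction part via $|\<x,(b_0\ast\mu)(x)\>|\le K|x|^2+|x|(K\mu(|\cdot|)+|b_0({\bf0})|)$, and controlling the cross terms emerging from $\dd^2|b_1^\dd+b_0\ast\tt\mu|^2\le 2\dd^2|b_1^\dd|^2+2\dd^2|b_0\ast\tt\mu|^2$ through $\rho=4K(K+\hat\lambda_{b_1})$, one is led to
\begin{equation*}
\E\big(|X_{(n+1)\dd}^{\dd,i,N}|^2\big|\mathscr F_{n\dd}^N\big)\le (1-\kk\dd/2)\,|X_{n\dd}^{\dd,i,N}|^2+K\dd\,\tt\mu^{\dd,N}_{n\dd}(|\cdot|^2)+\tilde C\dd
\end{equation*}
whenever $\dd\le\delta^*_\kk$, the thresholds in \eqref{F5} being tuned precisely so that every error term is dominated by $\kk\dd/2$.

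Passing to higher moments $p=2q\ge 4$ is then carried out by raising the quadratic recursion to the $q$-th power, applying the binomial theorem exactly as in the proof of Lemma~\ref{lemma3}, and invoking the fact that $(X_{n\dd}^{\dd,j,N})_{j\in\mathbb S_N}$ are identically distributed conditioned on $\mathscr F_0^N$, so that the empirical interaction $\tt\mu^{\dd,N}_{n\dd}(|\cdot|^{2q})$ collapses back to $\E(|X_{n\dd}^{\dd,i,N}|^{2q}|\mathscr F_0^N)$ after taking expectations. A discrete Gronwall/induction argument then delivers \eqref{*---}. The principal obstacle is engineering the tamed dissipation \emph{uniformly} in $\dd\in(0,\delta^*_\kk]$: since $\|\nn b_1\|_{\rm HS}$ may grow polynomially, $\dd^{1/2}\|\nn b_1\|_{\rm HS}$ is not uniformly small and one cannot simply Taylor-expand the taming denominator; one must instead play the two inequalities in \eqref{E*3} off each other through the effective factor $-\lambda_{b_1}+\hat\lambda_{b_1}^2\dd^{1/2}\|\nn b_1\|_{\rm HS}/(1+\dd^{1/2}\|\nn b_1\|_{\rm HS})$, which is strictly negative precisely when $\dd\le\lambda_{b_1}^2/\hat\lambda_{b_1}^4$ --- one of the defining factors of $\delta^*_\kk$ in \eqref{F5}.
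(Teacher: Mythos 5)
Your overall route is the same as the paper's: reduce to even moments, obtain a one-step estimate for $|X^{\delta,i,N}_{(n+1)\delta}|^2$ by playing the two inequalities of \eqref{E*3} against each other together with the far-field bound \eqref{**} and the thresholds in \eqref{F5}, then pass to higher moments by the binomial expansion of Lemma \ref{lemma3}, collapse the empirical term by conditional exchangeability, and iterate. However, as written your key one-step bound does not close. The balance of constants is off: in the far field the tamed drift supplies a dissipation of about $-\frac{2\alpha}{1+\delta^{1/2}\alpha}\big(\lambda_{b_1}-\hat\lambda_{b_1}^2\delta^{1/2}\big)\delta|x|^2\approx -(2\kappa+4K)\delta|x|^2$ (the factor $2$ comes from $2\<x,b_1^\delta(x)\>$; your ``$-\alpha\lambda_{b_1}\delta|x|^2$'' drops it), of which the interaction consumes roughly $3K\delta|x|^2+K\delta\,\tilde\mu^{\delta,N}_{n\delta}(|\cdot|^2)$ and the $\delta^2$-terms another $\rho\,\delta^{3/2}\big(|x|^2+\tilde\mu^{\delta,N}_{n\delta}(|\cdot|^2)\big)$. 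One must therefore keep the full coefficient $\beta_\delta:=K+\rho\,\delta^{1/2}$ of the empirical term subtracted in the $|x|^2$-coefficient as well, i.e.
\begin{equation*}
\big|X^{\delta,i,N}_{(n+1)\delta}\big|^2\le\big(1-(\kappa/4+\beta_\delta)\delta\big)\big|X^{\delta,i,N}_{n\delta}\big|^2+\beta_\delta\delta\,\tilde\mu^{\delta,N}_{n\delta}(|\cdot|^2)+C\delta+\bar\Lambda^{i,\delta}({\bf X}^{\delta,N}_{n\delta}),
\end{equation*}
so that after the identically-distributed-given-$\mathscr F_0^N$ collapse the empirical term cancels exactly and a net contraction $1-\kappa\delta/4$ survives (this is precisely how $(1-\kappa\delta/4)^p$ arises in \eqref{T3}). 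Your version, $(1-\kappa\delta/2)|x|^2+K\delta\,\tilde\mu(|\cdot|^2)$, yields after the collapse the factor $1-(\kappa/2-K)\delta$, which need not be a contraction under the standing hypothesis $\kappa=\alpha\lambda_{b_1}-2K>0$ alone (take $\alpha\lambda_{b_1}=2.5K$, so $\kappa=K/2$); the induction then produces a bound growing in $n$ rather than \eqref{*---}.

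A second problem is the order of operations for the higher moments: you first take $\E(\cdot\,|\mathscr F_{n\delta}^N)$ at the quadratic level to kill the Brownian increments and then propose to raise ``the quadratic recursion'' to the $q$-th power. Jensen's inequality goes the wrong way for that, since $\big(\E(|X|^2\,|\,\mathscr F)\big)^q\le\E(|X|^{2q}\,|\,\mathscr F)$. One must raise the pathwise inequality, retaining the stochastic terms (the analogue of $\bar\Lambda^{i,\delta}$ here, or of $U^i_{n\delta}$ in Lemma \ref{lemma3}), expand by the binomial theorem, and only then take conditional expectations term by term, using that the odd Brownian terms vanish and the remaining ones contribute at most order $\delta$ after Young's inequality; this is what the paper does with $\Upsilon^{i}_{n\delta},\hat\Upsilon^{i}_{n\delta},\bar\Upsilon^{i}_{n\delta}$. (A minor further point: on the near field $|x|\le R_\alpha$ it is the local boundedness of $b_1$ and $\nabla b_1$, not the global constants $C_{b_1},\hat C_{b_1}$, that lets you absorb the drift contributions into a constant, and the empirical-measure term cannot be absorbed there at all --- it must still be carried and cancelled by exchangeability.)
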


\begin{proof}
By tracing  the proof of Lemma \ref{lemma3}, it suffices to verify that, for any integer $p\ge3 $ and $\delta\in(0,\delta_\kk^*],$ there exists a constant $C^{\star\star}_p>0 $ such that
\begin{align}\label{T2}
\E\big(|X_{(n+1)\delta}^{\dd,i,N}|^{2p}\big|\mathscr F_0^N\big)\le (1- \dd/16)\E\big(|X_{n\delta}^{\dd,i,N}|^{2p}\big|\mathscr F_0^N\big)+C^{\star\star}_p\dd
\end{align}
in order to achieve \eqref{*---}.

From \eqref{BH14}, it can  be seen readily that
\begin{align*}
\big|X_{(n+1)\delta}^{\dd,i,N}\big|^2
&=\big|X_{n\delta}^{\dd,i,N}\big|^2+\big(2\<X_{n\delta}^{\dd,i,N},b_1^\delta(X_{n\delta}^{\dd,i,N})+\big(b_0\ast\tt\mu_{n\dd}^{\dd,N}\big)(X_{n\dd}^{\dd,i, N})\>
+\delta\big|b_1^\delta(X_{n\delta}^{\dd,i,N})\big|^2\big)\delta\\
&\quad+ \big\<\big(b_0\ast\tt\mu_{n\dd}^{\dd,N}\big)(X_{n\dd}^{\dd,i, N})+2b_1^\delta(X_{n\delta}^{\dd,i,N}),\big(b_0\ast\tt\mu_{n\dd}^{\dd,N}\big)(X_{n\dd}^{\dd,i, N})\big\>\big)\delta^2\\
&\quad+\big(\si^2\big|\triangle W_{n\delta}^i\big|^2 +2\si\big\<X_{n\delta}^{\dd,i,N}+b_1^\delta(X_{n\delta}^{\dd,i,N})\delta+\big(b_0\ast\tt\mu_{n\dd}^{\dd,N}\big)(X_{n\dd}^{\dd,i, N})\delta, \triangle W_{n\delta}^i\big\>\big)\\
&=:\big|X_{n\delta}^{\dd,i,N}\big|^2+\Lambda^{i,\delta}({\bf X}_{n\delta}^{\delta,N})\delta +\hat\Lambda^{i,\dd}({\bf X}_{n\delta}^{\delta,N})\delta^2+\bar\Lambda^{i,\delta}({\bf X}_{n\delta}^{\delta,N}),
\end{align*}
where ${\bf X}_{n\delta}^{\delta,N}$ was defined as in \eqref{T1}.

For any ${\bf x}:=(x_1,\cdots,x_N)\in(\R^d)^N$, let $\mu_{{\bf x}}^N=\frac{1}{N}\sum_{j=1}^N\delta_{x_j}$.
By invoking  \eqref{B2} and \eqref{E*3}, we derive   that  for any $\delta\in(0,\delta^*_\kk],$
\begin{align*}
\Lambda^{i,\delta}({\bf x})
&\le\frac{1}{1+\delta^{\ff12}\|\nn b_1(x_i)\|_{\rm HS}}\bigg(2\<x_i,b_1(x_i)\>+\frac{\delta|b_1(x_i)|^2}{1+\delta^{\ff12}\|\nn b_1(x_i)\|_{\rm HS}}\bigg) +2|x_i|\,\mu_{{\bf x}}^N(|b_0(x_i-\cdot)|)\\
&\le  - \frac{2\|\nn b_1(x_i)\|_{\rm HS}}{1+\delta^{\ff12}\|\nn b_1(x_i)\|_{\rm HS}}\big(  \ll_{b_1} -\hat\lambda_{b_1}^2\delta^{{\ff12}}\big)|x_i|^2+2\big(C_{b_1}+\hat C_{b_1}^2\big) \\
&\quad+ K\big(3|x_i|^2 + \mu_{{\bf x}}^N(|\cdot|^2)\big) +2|x_i|\cdot|b_0({\bf0})|
\end{align*}
and that there exists a   constant $C_0^*>0$  such that
\begin{align*}
\hat \Lambda^{i,\delta}({\bf x})&=\big|\mu_{{\bf x}}^N(b_0(x_i-\cdot)) \big|^2+\frac{2\<b_1(x_i),\mu_{{\bf x}}^N(b_0(x_i-\cdot))\>}{1+\delta^{\ff 12}\|\nn b_1(x_i)\|_{\rm HS}}  \\
&\le 4K^2\big(|x_i|^2+\mu_{{\bf x}}^N(|\cdot|^2)\big)+2|b_0({\bf0})|^2\\
&\quad+2 \big(\hat\lambda_{b_1}\delta^{-^{\ff12}} |x_i|+\hat C_{b_1}\big)\big(  K |x_i| +K\mu_{{\bf x}}^N(|\cdot|) + |b_0({{\bf0}})|  \big)\\
&\le 4K\big( K +  \hat\lambda_{b_1}  \delta^{-^{\ff12}}\big)\big(|x_i|^2+\mu_{{\bf x}}^N(|\cdot|^2)\big) +C_0^*(1+\delta^{-^{\ff12}}).
\end{align*}
Thus, combining  \eqref{**} with $\delta\in(0,1)$, in addition to the local boundedness of $\nn b_1$,  yields that for some constant $C_1^*>0$,
 \begin{align*}
\Lambda^{i,\delta}({\bf x})\delta +\hat\Lambda^{i,\dd}({\bf x})\delta^2&\le \bigg(- \frac{2\alpha}{1+\delta^{\ff12}\alpha}\Big(  \ll_{b_1}-\frac{1}{2\alpha}\big(3 K+\rho\delta^{\frac{1}{2}} \big)(1+\delta^{\ff12}\alpha)-\hat\lambda_{b_1}^2\delta^{\frac{1}{2}}  - \frac{\kk}{4\alpha} \Big)|x_i|^2\\
&\quad  +(K+\rho\delta^{\frac{1}{2}} )\mu_{{\bf x}}^N(|\cdot|^2)\bigg)\delta +C_1^* \delta\\
  &\le\bigg(- \frac{2\alpha}{1+\delta^{\ff12}\alpha}\Big(  \frac{3\kk}{4\alpha}-\big(2 K+\rho(1+1/\alpha)+\hat\lambda_{b_1}^2\big)\delta^{\frac{1}{2}} \Big)|x_i|^2\\
  &\quad+(K+\rho\delta^{\frac{1}{2}} )\big(\mu_{{\bf x}}^N(|\cdot|^2)-|x_i|^2\big)\bigg)\delta +C_1^* \delta,
\end{align*}
 where $\rho:=4K(K + \hat\lambda_{b_1}  )$.
In terms of the definition of $\delta^*_\kk$, we  right now have for any $\delta\in(0,\delta^*_\kk],$
\begin{align*}
 (2K+\rho(1+1/\alpha)+\hat\lambda_{b_1}^2)\delta^{\frac{1}{2}}\le\frac{\kk}{2\alpha}.
\end{align*}
Whence, owing to $\delta^{\ff12}\alpha\in(0,1)$ for $\delta\in(0,\delta^*_\kk],$  we infer   that
 \begin{align*}
\Lambda^{i,\delta}({\bf x})\delta +\hat\Lambda^i({\bf x})\delta^2
  &\le\big(-  (  \kk/4 +\beta_\delta)   |x_i|^2 +\beta_\delta\mu_{{\bf x}}^N(|\cdot|^2) \big)\delta +C_1^* \delta,
\end{align*}
where $\beta_\delta:=K+\rho\delta^{\frac{1}{2}}$. Whereafter,  the  preceding estimate    enables us to deduce that
\begin{align}\label{*-}
\big|X_{(n+1)\delta}^{\dd,i,N}\big|^2
\le \big(1-(  \kk/4 +\beta_\delta)\delta\big)\big|X_{n\delta}^{\dd,i,N}\big|^2+\beta_\delta\tt\mu_{n\dd}^{\dd,N}(|\cdot|^2)\delta +C_1^* \delta+\bar\Lambda^{i,\delta}({\bf X}_{n\delta}^{\delta}),
\end{align}
where the factor  $1-(  \kk/4 +\beta_\delta)\delta $ is positive  by taking $\delta\in(0,\delta^*_\kk]$ into consideration.

With \eqref{*-} at hand,
 we obtain  that for any integer $p\ge3,$
\begin{align*}
\big|X_{(n+1)\delta}^{\dd,i,N}\big|^{2p}
&\le  \big(\big(1-(  \kk/4 +\beta_\delta)\delta\big)\big|X_{n\delta}^{\dd,i,N}\big|^2+\beta_\delta\tt\mu_{n\dd}^{\dd,N}(|\cdot|^2)\delta\big)^p\\
&\quad+p\big(\big(1-(  \kk/4 +\beta_\delta)\delta\big)\big|X_{n\delta}^{\dd,i,N}\big|^2+\beta_\delta\tt\mu_{n\dd}^{\dd,N}(|\cdot|^2)\delta\big)^{p-1} \big(C_1^* \delta+\bar\Lambda^{i,\delta}({\bf X}_{n\delta}^{\delta})\big) \\
&\quad+\sum_{k=0}^{p-2}C_p^k\big(\big(1-(  \kk/4 +\beta_\delta)\delta\big)\big|X_{n\delta}^{\dd,i,N}\big|^2+\beta_\delta\tt\mu_{n\dd}^{\dd,N}(|\cdot|^2)\delta\big)^k \big(C_1^* \delta+\bar\Lambda^{i,\delta}({\bf X}_{n\delta}^{\delta})\big)^{p-k}\\
&=:\Upsilon_{n\delta}^{i}({\bf X}_{n\delta}^{\dd,N})+\hat\Upsilon_{n\delta}^{i}({\bf X}_{n\delta}^{\dd,N})+\bar\Upsilon_{n\delta}^{i}({\bf X}_{n\delta}^{\dd,N}).
\end{align*}
In the sequel, we aim to estimate separately the conditional expectations of $\Upsilon_{n\delta}^{i}$, $\hat\Upsilon_{n\delta}^{i}$, as well as $\bar\Upsilon_{n\delta}^{i}$ given the $\sigma$-algebra $\mathscr F_0^N$, which is generated by $X_0^{\delta,1,N}$, $\cdots,$ $X_0^{\delta,N,N}$.

In the first place,  the   binomial theorem and the Young inequality yield that
\begin{equation}\label{T3}
\begin{split}
\E\big(\Upsilon_{n\delta}^{i}({\bf X}_{n\delta}^{\dd,N})\big|\mathscr F_0^N\big)&=\sum_{k=0}^pC_p^k\big(1-(  \kk/4 +\beta_\delta)\delta\big)^k (\beta_\delta\delta)^{p-k} \big|X_{n\delta}^{\dd,i,N}\big|^{2k}  \big(\tt\mu_{n\dd}^{\dd,N}(|\cdot|^{2}) \big)^{(p-k)}\\
&\le \sum_{k=0}^pC_p^k\big(1-(  \kk/4 +\beta_\delta)\delta\big)^k (\beta_\delta\delta)^{p-k}\\
&\quad\times \Big(\frac{ k}{ p} \E\big(|X_{n\delta}^{\dd,i,N}|^{2p}\big|\mathscr F_0^N\big)  +\frac{ p- k}{ p} \E\big(\tt\mu_{n\dd}^{\dd,N}(|\cdot|^{2p})\big|\mathscr F_0^N\big) \Big)\\
&=(1-\kk\delta/4)^p\E\big(|X_{n\delta}^{\dd,i,N}|^{2p}\big|\mathscr F_0^N\big)\\
&\le(1-\kk\delta/4) \E\big(|X_{n\delta}^{\dd,i,N}|^{2p}\big|\mathscr F_0^N\big),
\end{split}
\end{equation}
where in the second identity we used the fact that $X_{ n\dd}^{\dd,i,N}$ and $X_{ n\dd}^{\dd,j,N}$ are identically distributed given $\mathscr F_0^N,$ and the last display is evident thanks to $1-\kk\delta/4\in(0,1).$
In the next place, due to $\E(\triangle W_{n\delta}^i|\mathscr F_0^N)=0$ and $\E(|\triangle W_{n\delta}^i|^2|\mathscr F_0^N)=d\delta$, it follows from  Young's inequality that there is a constant $C_2^*>0$  such that
\begin{equation}\label{T4}
\begin{split}
\E\big(\hat\Upsilon_{n\delta}^{i}({\bf X}_{n\delta}^{\dd,N})\big|\mathscr F_0^N\big)&\le pC_2^* \delta\E\big(\big(\big(1-(  \kk/4 +\beta_\delta)\delta\big)\big|X_{n\delta}^{\dd,i,N}\big|^2+\beta_\delta\tt\mu_{n\dd}^{\dd,N}(|\cdot|^2)\delta\big)^{p-1}\big|\mathscr F_0^N\big)\\
&\le\frac{\kk\dd }{16} \E\big(|X_{n\delta}^{\dd,i,N}|^{2p}\big|\mathscr F_0^N\big)+C_2^*\dd.
\end{split}
\end{equation}
Furthermore, note from \eqref{E*3}  and \eqref{B2} that
\begin{align*}
\big|b_1^\delta(x_i)+\mu_{{\bf x}}^N(b_0(x_i-\cdot))\big|\delta\le\big( \hat\lambda_{b_1}\delta^{ -{\ff12}}|x_i|+\hat C_{b_1}+ K(|x_i|+\mu_{{\bf x}}^N(|\cdot|))+|b_0({\bf0})|\big)\delta.
\end{align*}
This, together with the fact that the conditional expectation (given $\mathscr F_0^N$) of the  increment $\triangle W_{n\delta}^i$
contributes at least the order $\delta$, and the Young inequality,  leads to
\begin{align}\label{T5}
	\E\big(\bar\Upsilon_{n\delta}^{i}({\bf X}_{n\delta}^{\dd,N})\big|\mathscr F_0^N\big)&\le \frac{\kk\dd}{16} \E\big(|X_{n\delta}^{\dd,i,N}|^{2p}\big|\mathscr F_0^N\big)+C_3^*\dd
\end{align}
for some constant $C_3^*>0$. Ultimately, \eqref{T2} is reachable by pulling together \eqref{T3}, \eqref{T4} and \eqref{T5}.
\end{proof}

Based on Lemma \ref{lemma4}, it's turn to carry out the
\begin{proof}[Proof of Theorem \ref{thm5}]
Applying Theorem \ref{thm} with $r_0=0,$ $\theta_t=\bar\theta_t=t_\delta$, and $\tilde b(x,\mu)=b_1^\delta(x)+(b_0\ast\mu)(x)$, respectively,
enables us to derive that for  some $\lambda^*>0 $ and any $t\ge0,$
\begin{equation}\label{F6-1}
\begin{split}
\mathbb W_1\big(\mathscr L_{X_t^i},\mathscr L_{X_{t}^{\dd,i,N}}\big)&\lesssim  \e^{-\lambda^* t}\mathbb W_1\big(\mathscr L_{X_0^i},\mathscr L_{X_0^{\dd,i,N}}\big) +   N^{-\frac{1}{2}}\mathds{1}_{\{K>0\}}  \\
&\qquad\quad+ \int_0^t\e^{-\lambda^* (t-s)}\E\big| b(X_s^{\delta,i, N},\tilde \mu_s^{\delta,N})-\tilde b(X_{s_\delta}^{\delta,i,N},\tilde\mu_{s_\delta}^{\delta,N})\big| \,\d s.
\end{split}
\end{equation}
Next, by using \eqref{B2},  and \eqref{EE1}, it is easy to find that
 for $x,y\in\R^d$ and $\mu,\nu\in\mathscr P_1(\R^d)$,
\begin{align*}
|b(x,\mu)-\tilde b(y,\nu)|&\le |b_1(x)-b_1(y)|+|b_1(y)-b_1^\delta(y)|+|(b_0\ast\mu)(x)-(b_0\ast\nu)(y)|\\
&\le \int_0^1\<\nn b_1(y+s(x-y)),x-y\>\,\d s +\frac{\delta^{\frac{1}{2}}|b_1(y)|\cdot\|\nn b_1(y)\|_{\rm HS}}{1+\delta^{\frac{1}{2}}\|\nn b_1(y)\|_{\rm HS}} \\
&\quad+K\big(\mathbb W_1(\mu,\nu)+|x-y|\big)\\
&\lesssim  \big(1+|x|^{l^*}+|y|^{l^*}\big)|x-y|+\mathbb W_1(\mu,\nu)+(1+|y|^{2l^*+1})\delta^{\frac{1}{2}}.
\end{align*}
This, together with \eqref{F6-1}, implies that
\begin{equation}\label{F7}
\begin{split}
&\mathbb W_1\big(\mathscr L_{X_t^i},\mathscr L_{X_{t}^{\dd,i,N}}\big)\\&\lesssim  \e^{-\lambda^* t}\mathbb W_1\big(\mathscr L_{X_0^i},\mathscr L_{X_0^{\dd,i,N}}\big) +   N^{-\frac{1}{2}} \mathds{1}_{\{K>0\}} \\
&\quad+ \int_0^t\e^{-\lambda^* (t-s)}\E\Big(\E\big((1+|X_s^{\delta,i, N}|^{l^*}+|X_{s_\delta}^{\delta,i,N}|^{l^*})| X_s^{\delta,i, N}-X_{s_\delta}^{\delta,i,N}| \big|\mathscr F_0^N\big)\Big)\,\d s\\
&\quad+ \int_0^t\e^{-\lambda^* (t-s)}\E\big| X_s^{\delta,i, N}-X_{s_\delta}^{\delta,i,N}\big|\,\d s +\frac{1}{N}\sum_{j=1}^N\int_0^t\e^{-\lambda^* (t-s)}\E\big| X_s^{\delta,j, N}-X_{s_\delta}^{\delta,j,N}\big|\,\d s\\
&\quad+\delta^{\frac{1}{2}}\int_0^t\e^{-\lambda^* (t-s)}\big(1+\E|X_{s_\delta}^{\delta,i,N}|^{2l^*+1}\big)\,\d s.
\end{split}
\end{equation}
where  $\sigma$-algebra $\mathscr F_0^N$ is generated by $X_0^{\delta,1,N}$, $\cdots,$ $X_0^{\delta,N,N}$. Moreover, due to $\E|W_t^i-W_{t_\delta}^2|=d(t-t_\delta)$ and
$|b_1^\delta(x)|\lesssim1+\delta^{-\frac{1}{2}}|x|, x\in\R^d,$ by invoking \eqref{E*3},
it holds from Lemma \ref{lemma4}  that
\begin{align*}
\E\big(| X_t^{\delta,i, N}-X_{t_\delta}^{\delta,i,N}|^2\big|\mathscr F_0^N\big) \lesssim\big(1+\E\big(|X_{t_\delta}^{\delta,i,N}|^2\big|\mathscr F_0^N\big)\big)\delta\lesssim\big(1+|X_0^{\delta,i,N}|^2\big)\delta
\end{align*}
and
\begin{equation*}
\E\big(| X_t^{\delta,i, N}\big|^{2l^*}|\mathscr F_0^N\big) \lesssim 1+\E\big(| X_{t_\delta}^{\delta,i, N}|^{2l^*}|\mathscr F_0^N\big) \lesssim 1+| X_{0}^{\delta,i, N}|^{2l^*}.
\end{equation*}
 As a consequence,  the assertion \eqref{F3} is verifiable so the proof of Theorem \ref{thm5} is finished from \eqref{F7} and H\"older's inequality.
 \end{proof}

 \subsection{Proof of Theorem \ref{thm6}}

 The following lemma addresses the issue that the time grid associated with the adaptive EM scheme tends to infinity almost surely.

 \begin{lemma}\label{le}
Under  Assumptions $({\bf A}_1)$ and $({\bf A}_3)$,
\begin{align}\label{ER}
\P\big\{\oo\in \OO:\lim_{n\to \8}t_n(\oo)=+\8\big\}=1	.
\end{align}
\end{lemma}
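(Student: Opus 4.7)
The plan is to argue by contradiction that $\mathbb P(t_\infty<\infty)=0$, where $t_\infty:=\lim_{n\to\infty} t_n$. The strategy rests on two complementary facts about the empirical second moment
\begin{equation*}
V_n := \ff1N\sum_{i=1}^N \bigl|X_{t_n}^{\dd,i,N}\bigr|^2.
\end{equation*}
On one hand, I shall show that $V_n$ converges a.s.\ to a finite limit on $\{t_\infty<\infty\}$; on the other hand, on the same event $h_n^\dd\to 0$, which together with the polynomial growth of $b$ forces $V_n\to\infty$ along a subsequence. These two statements are incompatible, which establishes \eqref{ER}.

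For the first fact, I apply the discrete It\^o-type expansion of $|X_{t_{n+1}}^{\dd,i,N}|^2$ as in the proof of Lemma~\ref{lemma3}, exploiting the crucial bound
\begin{equation*}
\bigl|b(X_{t_n}^{\dd,i,N},\tt\mu_{t_n}^{\dd,N})\bigr|^2 h_n^\dd \le \dd, \qquad i\in\mathbb S_N,
\end{equation*}
which follows at once from the definition \eqref{T7}. Together with the dissipativity estimate \eqref{T10} averaged over $i\in\mathbb S_N$ and the identity $\tt\mu_{t_n}^{\dd,N}(|\cdot|^2)=V_n$, this yields the Lyapunov-type inequality
\begin{equation*}
\E\bigl[V_{n+1}\,\big|\,\mathscr F_{t_n}\bigr]\le V_n+(C'-\kk V_n)h_n^\dd
\end{equation*}
for some explicit $C'>0$ and $\kk:=3(\ll-2K)/2$, which is positive under the assumption $\ll>2K$. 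The decisive feature here is that both the positive and negative parts of the drift carry the common factor $h_n^\dd$, so a small $h_n^\dd$ equally damps both contributions.

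To turn this into an almost sure statement on the random event $\{t_\infty<\infty\}$, I introduce the stopping time $\tau_T:=\inf\{n:t_n>T\}$ for fixed $T>0$. The stopped process $V_{n\wedge\tau_T}$ then satisfies
\begin{equation*}
\E\bigl[V_{(n+1)\wedge\tau_T}\,\big|\,\mathscr F_{t_n}\bigr]\le V_{n\wedge\tau_T}+\xi_n-\kk V_{n\wedge\tau_T}h_n^\dd\mathds 1_{\{n<\tau_T\}},
\end{equation*}
where $\xi_n:=C'h_n^\dd\mathds 1_{\{n<\tau_T\}}$, and crucially $\sum_n\xi_n\le C'(T+\dd)$ deterministically. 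The Robbins--Siegmund supermartingale convergence lemma then delivers $V_{n\wedge\tau_T}\to V_\infty^{(T)}$ a.s.\ with $V_\infty^{(T)}$ finite. Since $\{\tau_T=\infty\}=\{\sup_n t_n\le T\}$, this entails a.s.\ convergence of $V_n$ to a finite limit on $\{\sup_n t_n\le T\}$, and letting $T\uparrow\infty$ yields a.s.\ convergence of $V_n$ on $\{t_\infty<\infty\}$.

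It finally remains to exploit $h_n^\dd\to 0$ on $\{t_\infty<\infty\}$. By \eqref{T7} this forces $\max_{j\in\mathbb S_N}|b(X_{t_n}^{\dd,j,N},\tt\mu_{t_n}^{\dd,N})|\to\infty$, and hence, passing to a subsequence and fixing an index $j^\star$, $|b(X_{t_n}^{\dd,j^\star,N},\tt\mu_{t_n}^{\dd,N})|\to\infty$. Using $|b_1(x)|\lesssim 1+|x|^{l^*+1}$ from $({\bf A}_3)$ together with $|(b_0\ast\mu)(x)|\lesssim 1+|x|+\mu(|\cdot|)$ derived from $({\bf A}_1)$, one infers that either $|X_{t_n}^{\dd,j^\star,N}|\to\infty$ or $\tt\mu_{t_n}^{\dd,N}(|\cdot|)\to\infty$ along the subsequence, and Jensen's inequality yields $V_n\to\infty$ in either case, contradicting the a.s.\ convergence of $V_n$ established above. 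The main technical subtlety, I expect, is precisely the interplay between the stopping at $\tau_T$ and the Robbins--Siegmund step, needed to convert the only a.s.\ finite quantity $\sum_n h_n^\dd=t_\infty$ into a bona fide summability assumption on the positive increments of the Lyapunov inequality.
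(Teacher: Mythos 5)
Your argument is correct in outline, but it takes a genuinely different route from the paper: the paper's proof of Lemma \ref{le} is a verification-plus-citation, checking that the step-size function $h(x,\mu)=(1+|b(x,\mu)|^2)^{-1}$ satisfies the hypotheses of \cite[Proposition 4.1]{NTDH} (boundedness of $|b(x,\mu)|(1+|b(x,\mu)|)h(x,\mu)$, boundedness of $h(x,\mu)\big(1+|x|^{2l^*+1}+\mu(|\cdot|)^2\big)$, and the one-sided growth bound $\langle x,b(x,\mu)\rangle\le C(1+|x|^2+\mu(|\cdot|)^2)$ coming from \eqref{T10}), and then invokes that external result; you instead give a self-contained contradiction argument built on the bound $|b(X_{t_n}^{\delta,i,N},\tilde\mu_{t_n}^{\delta,N})|^2h_n^\delta\le\delta$ from \eqref{T7}, a conditional Lyapunov inequality for the empirical second moment $V_n$, stopping at $\tau_T$, the Robbins--Siegmund lemma, and the observation that $t_\infty<\infty$ forces $h_n^\delta\to0$ and hence $V_n\to\infty$ along a subsequence. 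Your route buys independence from the cited proposition (which in \cite{NTDH} is stated for L\'evy-driven dynamics and whose hypothesis ({\bf T2}) the paper has to weaken on the fly); the paper's route buys brevity. Two small points to tighten: (i) you do not actually need $\kappa=3(\lambda-2K)/2>0$, hence no need of $\lambda>2K$ beyond the lemma's stated hypotheses --- if $\kappa\le0$ the term $-\kappa V_nh_n^\delta\mathds{1}_{\{n<\tau_T\}}$ can simply be absorbed into the Robbins--Siegmund coefficient $a_n=|\kappa|h_n^\delta\mathds{1}_{\{n<\tau_T\}}$, whose sum is again deterministically bounded by $|\kappa|(T+\delta)$, so the proof works under $({\bf A}_1)$ and $({\bf A}_3)$ alone, as in the paper; (ii) Robbins--Siegmund presupposes (conditional) integrability of $V_n$, which the standing moment assumptions on $X_0^{\delta,i,N}$ do not guarantee, so you should first localize on the $\mathscr F_0^N$-measurable events $\{V_0\le M\}$, on which integrability follows inductively from $\E[V_{n+1}\,|\,\mathscr F_{t_n}]\le V_n+C'\delta$, and then let $M\to\infty$; this is routine and does not affect the structure of your argument.
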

\begin{proof}
Let for all $x\in\R^d$ and $\mu\in\mathscr P(\R^d)$,
\begin{align*}
h(x,\mu)=(1+|b(x,\mu)|^2)^{-1}.
\end{align*}
Then, it is easy to see that for   $x\in\R^d$ and $\mu\in\mathscr P(\R^d)$,
\begin{align*}
|b(x,\mu)|(1+|b(x,\mu)|)h(x,\mu)\le3/2.
\end{align*}
Next, from \eqref{B2} and (${\bf A_3}$), there is a constant $C_1>0$ such that for all $x\in\R^d$ and $\mu\in\mathscr P_1(\R^d)$,
\begin{align*}
 h(x,\mu)  \big(1+|x|^{2l^*+1}+\mu(|\cdot|)^2 \big)\le C.
\end{align*}
Furthermore, note that Assumption ({\bf T2}) in \cite[Proposition 4.1]{NTDH} can be weakened as below:
\begin{align*}
\<x,b(x,\mu)\>\le C_1\big(1+|x|^2+\mu(|\cdot|)^2\big),\quad x\in\R^d,~\mu\in\mathscr P_1(\R^d).
\end{align*}
Consequently, the assertion \eqref{ER} follows from \cite[Proposition 4.1]{NTDH} and \eqref{T10}.
\end{proof}

 Unlike the backward/tamed EM scheme, the step size involved is a constant so an inductive argument can be used to
establish the uniform moment bound. Nevertheless, with regard to the adaptive EM scheme \eqref{EW5}, the underlying step size is an adaptive stochastic process.
Hence, the approach adopted in treating Lemmas \ref{lemma3} and \ref{lemma4} no longer works to handle the uniform moment bound of the adaptive EM scheme \eqref{EW5}, which is stated as a lemma   below.

\begin{lemma}\label{lemma5}
Assume $({\bf A}_1)$ with $\phi(r)=\lambda_0r$ for some $\lambda_0>0$ and $\lambda>  2 K $. Then, for any $p\ge1$ , $i\in \mathbb S_N$ and $\dd\in(0,1)$, there is a constant $C^*_p>0$ such that
\begin{equation}\label{T6}
 \E\big|X_t^{\dd,i,N}\big|^p\le C^*_p\big(1+\E\big|X_0^{\dd,i,N}\big|^p\big),\quad t\ge0.
\end{equation}
\end{lemma}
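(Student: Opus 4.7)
The plan is to work in continuous time, applying It\^o's formula to the Lyapunov function $V_p(x):=(1+|x|^2)^{p/2}$ evaluated along the continuous-time adaptive scheme \eqref{EW5}. As in the proof of Lemma \ref{lemma3}, H\"older's inequality reduces the task to establishing \eqref{T6} for every even integer $p\ge 2$; the general case then follows by Jensen/H\"older.

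The cornerstone of the argument is a pathwise estimate that plays, in the adaptive setting, the role played by the deterministic smallness of $h_n^\dd|b|$ for the tamed and backward schemes. Since $h_n^\dd=\dd\min_j(1+|b(X_{t_n}^{\dd,j,N},\tt\mu_{t_n}^{\dd,N})|^2)^{-1}$, we have the almost-sure inequality
\[
(t-\underline{t})\,\big|b(X_{\underline{t}}^{\dd,i,N},\tt\mu_{\underline{t}}^{\dd,N})\big|^2\le h_n^\dd\,\big|b(X_{t_n}^{\dd,i,N},\tt\mu_{t_n}^{\dd,N})\big|^2\le \dd,\qquad t\ge0,\ i\in\mathbb S_N,
\]
which, together with It\^o's isometry applied to the identity $X_t^{\dd,i,N}-X_{\underline{t}}^{\dd,i,N}=(t-\underline{t})b(X_{\underline{t}}^{\dd,i,N},\tt\mu_{\underline{t}}^{\dd,N})+\si(W_t^i-W_{\underline{t}}^i)$, furnishes $\E|X_t^{\dd,i,N}-X_{\underline{t}}^{\dd,i,N}|^q\le C_q\dd^{q/2}$ for every $q\ge1$, uniformly in $t$ and $i$.

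Applying It\^o's formula to $V_p(X_t^{\dd,i,N})$ and splitting the drift as
\[
\big\<X_t^{\dd,i,N},b(X_{\underline{t}}^{\dd,i,N},\tt\mu_{\underline{t}}^{\dd,N})\big\>=\big\<X_{\underline{t}}^{\dd,i,N},b(X_{\underline{t}}^{\dd,i,N},\tt\mu_{\underline{t}}^{\dd,N})\big\>+\big\<X_t^{\dd,i,N}-X_{\underline{t}}^{\dd,i,N},b(X_{\underline{t}}^{\dd,i,N},\tt\mu_{\underline{t}}^{\dd,N})\big\>,
\]
the first piece is controlled by the partial dissipativity estimate \eqref{T10}, supplying the decay $-c_p|X_{\underline{t}}^{\dd,i,N}|^p$ together with a mean-field contribution proportional to $\tt\mu_{\underline{t}}^{\dd,N}(|\cdot|^p)$; the second piece is dominated by the pathwise bound above. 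After averaging over $i\in\mathbb S_N$, invoking the exchangeability of $(X_t^{\dd,i,N})_{i\in\mathbb S_N}$ given $\mathscr F_0^N$ to identify $\E\tt\mu_{\underline{t}}^{\dd,N}(|\cdot|^p)$ with $\E|X_{\underline{t}}^{\dd,i,N}|^p$, and transferring the decay from $|X_{\underline{t}}|^p$ to $|X_t|^p$ via the elementary bound $|X_{\underline{t}}|^p\ge(1-\vv)|X_t|^p-C_\vv|X_t-X_{\underline{t}}|^p$ combined with $\E|X_t-X_{\underline{t}}|^p\le C\dd^{p/2}$, one arrives at a Gronwall-type inequality
\[
\frac{\d}{\d t}\E V_p(X_t^{\dd,i,N})\le -\eta_p\,\E V_p(X_t^{\dd,i,N})+C_p,
\]
from which \eqref{T6} follows at once for $p=2$; the cases $p\ge 4$ proceed by induction on $p$, with the binomial cross-terms generated by $V_p$ controlled by the already-established lower moment bounds.

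The main obstacle is that, in sharp contrast to Lemmas \ref{lemma3} and \ref{lemma4} where $\dd$ is deterministic and a one-step induction on the time grid produces the uniform moment bound directly, here $h_n^\dd$ is random and couples all particles through the minimum over $j\in\mathbb S_N$; this precludes a clean discrete Gronwall on the chain and is precisely why we must pass to continuous time via It\^o's formula. Additional care will then be required to show that the discretization error arising from $|X_t-X_{\underline{t}}|$ remains of order $\dd$ uniformly in $t$ after taking expectations, so that it is absorbed by the exponential decay $-\eta_p\E V_p$ rather than accumulating over the infinite horizon.
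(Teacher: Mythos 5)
Your proposal follows essentially the same route as the paper's proof: the pathwise bound $(t-\underline{t})\,|b(X_{\underline{t}}^{\delta,i,N},\tilde\mu_{\underline{t}}^{\delta,N})|^2\le\delta$ coming from the adaptive step size (the paper's \eqref{T11}), It\^o's formula in continuous time, the dissipativity estimate \eqref{T10} applied at the grid point, exchangeability given $\mathscr F_0^N$, the strong Markov property to control Brownian increments over the random interval $[\underline{t},t]$, and a transfer of the decay between $|X_{\underline{t}}^{\delta,i,N}|$ and $|X_t^{\delta,i,N}|$ feeding a Gronwall-type argument. The only cosmetic differences are your choice of $V_p(x)=(1+|x|^2)^{p/2}$ and an induction over even $p$, where the paper works directly with $|x|^{2p}$ (weighted by $\e^{\lambda_p t}$) and absorbs the binomial cross-terms by Young's inequality for each fixed $p$.
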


\begin{proof}
By following partly  the proof of Lemma \ref{lemma3}, for each integer $p\ge3,$ it is sufficient to show that there exist constants $\lambda_p,C_p^*>0$
such that
\begin{align}\label{b3}
\mathbb{E} \big(|X_t^{\delta, i, N} |^{2p} \big|\mathscr F_0^N \big)\le C_p^*\big(1 +\e^{-\lambda_p t}\big|X_0^{\delta, i, N} \big|^{2p}\big),\quad i\in\mathbb S_N,~t\ge0
\end{align}
for the sake of validity of \eqref{T6}.

Applying It\^o's formula, we obtain from  \eqref{EW5} that for $\lambda_p:=p(\lambda-  2 K)/6 $ and integer $p\ge3$,
\begin{equation}\label{b1}
\d\big(\e^{\lambda_p t} |X_t^{\delta, i, N}|^{2p}\big) = \e^{\lambda_p t}\big(\lambda_p |X_t^{\delta, i, N}|^{2p}+\Phi^{i,p}({\bf X}_t^{\delta,N})
+c_p|X_t^{\delta, i, N}|^{2(p-1)} \big)\d t+\mathrm{d} M_t^{(p)},
\end{equation}
where $(M_t^{(p)})_{t\ge0}$ is a   martingale, $c_p:=\sigma^2 p(d+2(p-1))$, and
\begin{equation*}
\Phi^{i,p}\big({\bf X}_t^{\delta,N}\big):=2 p |X_t^{\delta, i, N}|^{2(p-1)}\big\< X_t^{\delta, i, N}, b(X_{{\underline t}}^{\delta, i, N}, \tilde{\mu}_{\underline t}^{\delta, N})\big\>.
\end{equation*}
By taking the structure of the adaptive step size defined in \eqref{T7} into consideration, besides $\delta\in(0,1),$ it is easy to see that
\begin{equation}\label{T11}
\big|b(X_{{\underline t}}^{\delta, i, N},\tilde{\mu}_{\underline t}^{\delta, N}) \big| (t-{\underline t})\le \delta^{\frac{1}{2}}(t-{\underline t})^{\frac{1}{2}}\le1.
\end{equation}
Whence, by making use of  the strong Markov property of $(W_t^i)_{t\ge0}$ and the tower property of conditional expectations, in addition to $t-\underline t\le1$, there exist constants $C_p^{*,1},C_p^{*,2}>0$ such that
\begin{equation}\label{b2}
\begin{split}
\E\big(|X_t^{\delta, i, N}|^{2p}\big|\mathscr F_0^N\big)&\le\frac{3}{2}\E\big(|X_{{\underline t}}^{\delta, i, N}|^{2p}\big|\mathscr F_0^N\big)+C_p^{*,1}\big( 1 + \E\big(| W_t^i-W_{{\underline t}}^i |^{2p}\big|\mathscr F_0^N\big) \big)\\
&=\frac{3}{2}\E\big(|X_{{\underline t}}^{\delta, i, N}|^{2p}\big|\mathscr F_0^N\big)+C_p^{*,1}\big( 1+ \E\big(\E\big(| W_t^i-W_{{\underline t}}^i |^{2p}\big|\mathscr F_{{\underline t}}^N\big)\big|\mathscr F_0^N\big) \big)\\
&\le \frac{3}{2}\E\big(|X_{{\underline t}}^{\delta, i, N}|^{2p}\big|\mathscr F_0^N\big)+C_p^{*,2},
\end{split}
\end{equation}
where $\mathscr F_{{\underline t}}^N$ is the $\si$-algebra of ${\underline t}$-past with $(\mathscr F_t^N)_{t\ge0}$ being the $\si$-algebra generated by $(X_0^{\delta, 1, N},\cdots,X_0^{\delta, N, N})$ and $(W_t^{1},\cdots, W_t^N)_{t\ge0}$. Then, provided that we claim that there is a  constant   $ C_p^{*,3}>0$
satisfying
\begin{align}\label{b4}
 \E\big(\Phi^{i,p}({\bf X}_t^{\delta,N})\big|\mathscr F_0^N\big)\le-\frac{1}{2}p(\lambda-2K)\E\big(|X_{{\underline t}}^{\delta, i, N}|^{2p}\big|\mathscr F_0^N\big)+C_p^{*,3},
\end{align}
  combining \eqref{b1} with \eqref{b2} yields that for some positive constants  $C_p^{*,4},C_p^{*,5}$,
\begin{align*}
\e^{\lambda_p t}\E\big( |X_t^{\delta, i, N}|^{2p}\big|\mathscr F_0^N\big)&\le \big|X_0^{\delta, i, N}\big|^{2p}+\int_0^t\e^{\lambda_p s}\Big(2\lambda_p \E\big( |X_s^{\delta, i, N}|^{2p}\big|\mathscr F_0^N\big)\\
&\qquad\qquad\qquad\qquad\qquad-\frac{1}{2}p(\lambda-2K)\E\big(|X_{\underline{s}}^{\delta, i, N}|^{2p}\big|\mathscr F_0^N\big)+C_p^{*,4}\Big)\,\d s\\
&\le \big|X_0^{\delta, i, N}\big|^{2p}+C_p^{*,5}(\e^{\lambda_p t}-1).
\end{align*}
Therefore, \eqref{b3} follows directly.

Below, we attempt to verify \eqref{b4}.
By invoking  \eqref{EW5} once more, it is apparent  to see that
\begin{equation}\label{T8}
\begin{split}
|X_t^{\delta, i, N}|^{2(p-1)}&=\big|X_{{\underline t}}^{\delta, i, N}+b(X_{{\underline t}}^{\delta, i, N},\tilde{\mu}_{\underline t}^{\delta, N})(t-{\underline t})+\si(W_t^i-W_{{\underline t}}^i)\big|^{2(p-1)}\\
&=\big|X_{{\underline t}}^{\delta, i, N}\big|^{2(p-1)}+\sum_{k=0}^{p-2}C_{p-1}^k\big|X_{{\underline t}}^{\delta, i, N}\big|^{2k}\big(\Psi^i({\bf X}_{\underline t}^{\delta,N})\big)^{p-1-k}
\end{split}
\end{equation}
and that
\begin{equation}\label{T9}
\begin{split}
\big\< X_t^{\delta, i, N}, b(X_{{\underline t}}^{\delta, i, N}, \tilde{\mu}_{\underline t}^{\delta, N})\big\> &=\big\<X_{{\underline t}}^{\delta, i, N},b(X_{{\underline t}}^{\delta, i, N}, \tilde{\mu}_{\underline t}^{\delta, N})\big\>+\Upsilon^i({\bf X}_{\underline t}^{\delta,N}),\\
\end{split}
\end{equation}
where
\begin{align*}
\Psi^i({\bf X}_{\underline t}^{\delta,N}):&=\big|b(X_{{\underline t}}^{\delta, i, N},\tilde{\mu}_{\underline t}^{\delta, N})\big|^2(t-{\underline t})^2+\si^2\big|W_t^i-W_{{\underline t}}^i\big|^2\\
&\quad+2\big\<X_{{\underline t}}^{\delta, i, N},b(X_{{\underline t}}^{\delta, i, N},\tilde{\mu}_{\underline t}^{\delta, N})\big\>(t-{\underline t})+2\si\big\<X_{{\underline t}}^{\delta, i, N},W_t^i-W_{{\underline t}}^i\big\>\\
&\quad+2\si\big\<b(X_{{\underline t}}^{\delta, i, N},\tilde{\mu}_{\underline t}^{\delta, N}),W_t^i-W_{{\underline t}}^i\big\>(t-{\underline t}),\\
\Upsilon^i({\bf X}_{\underline t}^{\delta,N}):&=\big|b(X_{{\underline t}}^{\delta, i, N},\tilde{\mu}_{\underline t}^{\delta, N})\big|^2(t-{\underline t})+\si\big\<W_t^i-W_{{\underline t}}^i,b(X_{{\underline t}}^{\delta, i, N}, \tilde{\mu}_{\underline t}^{\delta, N})\big\>.
\end{align*}
Now, plugging \eqref{T8} and \eqref{T9} back into \eqref{b1} gives that
\begin{equation*}
\begin{aligned}
\Phi^{i,p}({\bf X}_{\underline t}^{\delta,N})&= 2 p \big|X_{{\underline t}}^{\delta, i, N}\big|^{2(p-1)}\big\<X_{{\underline t}}^{\delta, i, N},b(X_{{\underline t}}^{\delta, i, N}, \tilde{\mu}_{\underline t}^{\delta, N})\big\>  +2 p \big|X_{{\underline t}}^{\delta, i, N}\big|^{2(p-1)}\Upsilon^i({\bf X}_t^{\delta,N})\\
&\quad+2 p \big(\big\<X_{{\underline t}}^{\delta, i, N},b(X_{{\underline t}}^{\delta, i, N}, \tilde{\mu}_{\underline t}^{\delta, N})\big\>+\Upsilon^i({\bf X}_t^{\delta,N})\big) \sum_{k=0}^{p-2}C_{p-1}^k\big|X_{{\underline t}}^{\delta, i, N}\big|^{2k}\big(\Psi^i({\bf X}_t^{\delta,N})\big)^{p-1-k}\\
&=:\Theta^{i,p}({\bf X}_{\underline t}^{\delta,N})+\hat\Theta^{i,p}({\bf X}_{\underline t}^{\delta,N})+\bar\Theta^{i,p}({\bf X}_{\underline t}^{\delta,N}).
\end{aligned}
\end{equation*}
By utilizing \eqref{T10}, the leading term $\Theta^{i,p}({\bf X}_{\underline t}^{\delta,N})$ can be tackled as follows:
\begin{equation}\label{T13}
\begin{split}
\E\big(\Theta^{i,p}({\bf X}_{\underline t}^{\delta,N})\big|\mathscr F_0^N\big)&\le 2p\Big(-\frac{1}{4}(3\lambda- 4 K)\E\big(|X_{{\underline t}}^{\delta, i, N}|^{2p}\big|\mathscr F_0^N\big)\\
&\quad+\frac{1}{2}K\E\big(|X_{{\underline t}}^{\delta, i, N}|^{2(p-1)}\tilde{\mu}_{\underline t}^{\delta, N}(|\cdot|^2)\big|\mathscr F_0^N\big)+C_0\E\big(|X_{{\underline t}}^{\delta, i, N}|^{2(p-1)}\big|\mathscr F_0^N\big)\Big)\\
&\le -p(\lambda-  2 K)\E\big(|X_{{\underline t}}^{\delta, i, N}|^{2p}\big|\mathscr F_0^N\big)+C_p^{*,6}
\end{split}
\end{equation}
for some constant $C_p^{*,6}>0,$
where we also employed that $X_{{\underline t}}^{\delta, i, N}$ and $X_{{\underline t}}^{\delta, j, N}$ are distributed identically given $\mathscr F_0^N.$ Subsequently, in view of
\begin{equation*}
\E\big(|X_{{\underline t}}^{\delta, i, N}|^{2(p-1)}\big\<W_t^i-W_{{\underline t}}^i,b(X_{{\underline t}}^{\delta, i, N}, \tilde{\mu}_{\underline t}^{\delta, N})\big\>\big|\mathscr F_0^N\big)=0,
\end{equation*}
 the Young inequality implies that for some constant $C_p^{*,7}>0,$
\begin{align}\label{T14}
\E\big(\hat\Theta^{i,p}({\bf X}_{\underline t}^{\delta,N})\big|\mathscr F_0^N\big)&\le   \frac{1}{4}p(\lambda- 2K)\E\big(|X_{\underline t}^{\delta, i, N}|^{2p}\big|\mathscr F_0^N\big)+C_p^{*,7}>0.
\end{align}
Furthermore, by virtue of \eqref{T11}, we obviously obtain that
\begin{align*}
\Psi^i({\bf X}_{\underline t}^{\delta,N}) &\le  t-{\underline t} +\si^2\big|W_t^i-W_{{\underline t}}^i\big|^2+2 (t-{\underline t}\,)^{\frac{1}{2}}\big|X_{{\underline t}}^{\delta, i, N}\big|
+2|\si|\big(1+\big|X_{{\underline t}}^{\delta, i, N}\big|\big)  \big|W_t^i-W_{{\underline t}}^i\big|
\end{align*}
and
\begin{align*}
\Upsilon^i({\bf X}_{\underline t}^{\delta,N})\le   1+|\si|\cdot\big|b(X_{{\underline t}}^{\delta, i, N}, \tilde{\mu}_{\underline t}^{\delta, N})\big|\cdot\big|W_t^i-W_{{\underline t}}^i\big|.
\end{align*}
Thus, we find that
\begin{equation}\label{T12}
\begin{split}
\E\big(\bar\Theta^{i,p}({\bf X}_{\underline t}^{\delta,N})\big|\mathscr F_0^N\big)&\le 2 p \big(1+\big(|X_{{\underline t}}^{\delta, i, N}\big| +|\si||W_t^i-W_{{\underline t}}^i|\big)\big|b(X_{{\underline t}}^{\delta, i, N}, \tilde{\mu}_{\underline t}^{\delta, N})\big|\big)\\
&\quad\times\sum_{k=0}^{p-2}C_{p-1}^k\big|X_{{\underline t}}^{\delta, i, N}\big|^{2k}\Big( t-{\underline t} +\si^2|W_t^i-W_{{\underline t}}^i|^2+2\delta |X_{{\underline t}}^{\delta, i, N}| (t-{\underline t}\,)^{\frac{1}{2}}\\
&\qquad\qquad\qquad\qquad\qquad+2|\si|\big(1+|X_{{\underline t}}^{\delta, i, N}|\big)\big|W_t^i-W_{{\underline t}}^i\big|\Big)^{p-1-k}.
\end{split}
\end{equation}
Notice that the degree of the polynomial (on the right hand side of \eqref{T12}) with respect to $\big|X_{{\underline t}}^{\delta, i, N}\big| $ is $2(p-1)$. In addition, the conditional expectation of the polynomial with respect to  $ |W_t^i-W_{{\underline t}}^i | $ given the $\si$-algebra $\mathscr F_{\underline t}^N$ offers at least the order $(t-{\underline t}\,)^{\frac{1}{2}}$ so that the term $ |b (X_{{\underline t}}^{\delta, i, N}, \tilde{\mu}_{\underline t}^{\delta, N} ) |(t-{\underline t}\,)^{\frac{1}{2}}$ can be uniformly bounded by taking advantage of \eqref{T11}. Once again, with the aid of  Young's inequality, there exists a  constant  $C_p^{*,8}>0$ such that
\begin{align*}
\E\big(\bar\Theta^{i,p}({\bf X}_{\underline t}^{\delta,N})\big|\mathscr F_0^N\big)&\le   \frac{1}{4}p(\lambda- 2K)\E\big(|X_{{\underline t}}^{\delta, i, N}|^{2p}\big|\mathscr F_0^N\big)+C_p^{*,8}>0.
\end{align*}
This, together with \eqref{T13} and \eqref{T14}, enables us to achieve \eqref{b4}.
\end{proof}

Before the end of this work, we accomplish the
\begin{proof}[Proof of Theorem \ref{thm6}]
Once again, an application of Theorem \ref{thm} with $r_0=0,$ $\theta_t=\bar\theta_t=\underline t$, and $\tilde b=b$, respectively,
yields  that for  some $\lambda^*>0 $ and any $t\ge0,$
\begin{equation}\label{F6}
\begin{split}
\mathbb W_1\big(\mathscr L_{X_t^i},\mathscr L_{X_{t}^{\dd,i,N}}\big)&\lesssim  \e^{-\lambda^* t}\mathbb W_1\big(\mathscr L_{X_0^i},\mathscr L_{X_0^{\dd,i,N}}\big) +   N^{-\frac{1}{2}}\mathds{1}_{\{K>0\}}  \\
&\qquad\quad+ \int_0^t\e^{-\lambda^* (t-s)}\E\big| b(X_s^{\delta,i, N},\tilde \mu_s^{\delta,N})- b(X_{\underline s}^{\delta,i,N},\tilde\mu_{\underline s}^{\delta,N})\big| \,\d s.
\end{split}
\end{equation}
Next, from \eqref{B2} and (${\bf A}_3$), it is easy to see that
\begin{equation}\label{F10}
\begin{split}
\E\big| b(X_t^{\delta,i, N},\tilde \mu_t^{\delta,N})- b(X_{\underline t}^{\delta,i,N},\tilde\mu_{\underline t}^{\delta,N})\big|&\lesssim \E\big(\big(1+\big|X_t^{\delta,i, N}\big|^{l^*}+\big|X_{\underline t}^{\delta,i,N}\big|^{l^*}\big)\big|X_t^{\delta,i, N}-X_{\underline t}^{\delta,i,N}\big|\big)\\
&\quad+\E\big|X_t^{\delta,i, N}-X_{\underline t}^{\delta,i,N}\big|+\frac{1}{N}\sum_{j=1}^N\E\big|X_t^{\delta,j, N}-X_{\underline t}^{\delta,j,N}\big|.
\end{split}
\end{equation}
Moreover, by taking the definition of $h^\delta$ given in \eqref{T7} into account, one has
\begin{align}\label{F9}
|X_t^{\delta,i, N}-X_{\underline t}^{\delta,i,N}|&\le|b(X_{\underline t}^{\delta,i,N},\tilde\mu_{\underline t}^{\delta,N})|(t-\underline t)+|\si(W_t^i-W_{\underline t}^i)| \le \delta+|\si(W_t^i-W_{\underline t}^i)|.
\end{align}
 Note that from H\"older's inequality, we deduce that
 \begin{equation}\label{F11}
 \begin{split}
 	&\E\big((1+|X_t^{\delta,i, N}|^{l^*})\big|X_t^{\delta,i, N}-X_{\underline t}^{\delta,i,N}\big|\big)\\
 &\1 \E\Big(\big(1+\big(\E\big(|X_t^{\delta,i, N}|^{2l^*}\big|\mathscr F_0^N\big)\big)^{\ff12}\big)
	\big(\E\big(\big|X_t^{\delta,i, N}-X_{\underline t}^{\delta,i,N}\big|^2\big|\mathscr F_0^N\big)\big)^{\ff12}\Big).
\end{split}
 \end{equation}
Whereafter, the assertion \eqref{F8} can be attainable by plugging \eqref{F10} back into \eqref{F6} followed by combining \eqref{F9} and \eqref{F11} with  Lemma \ref{lemma5}. Consequently, the proof of Theorem \ref{thm6} is complete.
\end{proof}

\noindent{\bf Acknowledgments}

The research of
Bao is supported by the National Key R\&D Program of China (2022YFA1006004) and
NSF of China (No. 12071340).

\end{document}